\documentclass[11pt, twoside]{amsart}

\title[NIM-reps of TY generalizations]{NIM-representations of Tambara-Yamagami generalizations}

\author[A. Czenky]{Agustina Czenky}
\address{Czenky: Department of Mathematics, University of Southern California,
3620 S. Vermont Ave., KAP 104. Los Angeles, CA 90089, USA}
\email{czenky@usc.edu}

\author[E. McGovern]{Emily McGovern}
\address{McGovern: Department of Mathematics, University of Oregon, 1021 E. 13th Ave, Eugene, OR 97403, USA}
\email{emilycmcgovern@gmail.com}

\author[M. Molander]{Melody Molander}
\address{Molander: Department of Mathematics, The Ohio State University, 100 Math Tower, 231 W 18th Ave, Columbus, OH 43210, USA}
\email{molander.3@osu.edu}

\author[M. Müller]{Monique Müller}
\address{Müller: Departamento de Matem\'atica e Estat\'istica, Universidade Federal de S\~ao Jo\~ao del-Rei, Pra\c ca Frei Orlando, 170, S\~ao Jo\~ao del-Rei, MG 36307-352, Brazil}
\email{monique@ufsj.edu.br}

\author[A. Ros Camacho]{Ana Ros Camacho}
\address{Ros Camacho: Facultat de Ciències Matemàtiques, Universitat de València, Avinguda Vicent Andrés Estellés 19, 46100 Burjassot (València), Spain}
\email{ana.ros.camacho@uv.es}

\usepackage{amsmath}
\usepackage{amsfonts}
\usepackage{amsthm}
\usepackage{amssymb,bbm}
\usepackage{dsfont}
\usepackage{stmaryrd}
\usepackage{mathabx}
\usepackage{appendix}
\usepackage[english]{babel}
\usepackage{url}
\usepackage{fancyhdr}
\usepackage{graphicx}
\usepackage{verbatim}
\usepackage[normalem]{ulem}
\usepackage[shortlabels]{enumitem}
\newcommand{\stkout}[1]{\ifmmode\text{\sout{\ensuremath{#1}}}\else\sout{#1}\fi}
\usepackage[
colorlinks=true,
linkcolor=black, 
anchorcolor=black,
citecolor=black,
urlcolor=black, 
]{hyperref}
\usepackage{cleveref}
\usepackage[all,cmtip]{xy}
\usepackage{geometry}
\usepackage[dvipsnames]{xcolor}
\usepackage{quiver}
\usepackage{lscape} 
\usepackage{multicol}
\usepackage{float}

\usepackage[utf8]{inputenc}
\usepackage{tikz}
\usepackage{tikz-cd}

\allowdisplaybreaks

\usepackage{adjustbox}

\usepackage{enumitem}

\usepackage[notcite,notref,final]{showkeys}

\usepackage{calrsfs}
\DeclareMathAlphabet{\cal}{OMS}{zplm}{m}{n}

\DeclareMathAlphabet{\mathsf}{OT1}{cmss}{m}{n} 

\newcommand{\id}{\textnormal{id}}
\newcommand{\one}{\mathbbm{1}}

\newcommand{\Hom}{\textnormal{Hom}}

\usepackage{relsize}

\newcommand{\act}{\triangleright}

\newcommand{\Vect}{\mathsf{Vec}}

\newcommand{\GLM}{\textnormal{GLM}(\Gamma, \delta)}

\newcommand{\mZ}{\mathbb{Z}}

\newcommand{\cC}{\cal{C}}

\numberwithin{equation}{section}

\newtheorem{theorem}{Theorem}[section]

\newtheorem{proposition}[theorem]{Proposition}

\newtheorem{corollary}[theorem]{Corollary}
\newtheorem{lemma}[theorem]{Lemma}

\newtheorem{theorem*}{Theorem}

\theoremstyle{definition}
\newtheorem{definition}[theorem]{Definition}

\newtheorem{thma}{Theorem}

\newtheorem{example}[theorem]{Example}
\newtheorem{remark}[theorem]{Remark}

\makeatletter
\@namedef{subjclassname@2020}{%
  \textup{2020} Mathematics Subject Classification}
\makeatother

\subjclass[2020]{18M20}

\keywords{NIM-reps, Tambara-Yamagami, fusion rings, algebra objects}

\tikzcdset{scale cd/.style={every label/.append style={scale=#1},
    cells={nodes={scale=#1}}}}

\begin{document}

\begin{abstract}
 We compute and classify the irreducible non-negative integer matrix (NIM-)represen\-tations of two proposed generalizations of the Tambara-Yamagami fusion ring, as studied by Jordan-Larson and Galindo-Lentner-Möller, respectively. We also detect the algebra objects associated to these NIM-representations.
\end{abstract}
\maketitle

\section{Introduction}
Tambara-Yamagami (TY) categories form one of the simplest families of non-pointed fusion categories: each is determined by a finite abelian group $A$, a non-degenerate symmetric bicharacter $\chi: A \times A \to \mathbb{C}^\times$, and a sign $\tau = \pm |A|^{-1/2}$ \cite{TY}. Invertible objects are labelled by elements of $A$, and there is a single non-invertible simple object $X$ satisfying $X \otimes X = \bigoplus_{a \in A} a.$

Despite their elementary description, TY categories play a central role in mathematical physics. 
Via the Turaev-Viro \cite{TV} or the Reshetikhin-Turaev \cite{RT} constructions, they produce $\left(2+1 \right)$-dimensional TQFTs that have been studied intensively (see e.g. \cite{DS,TV} regarding their associated invariants).

TY categories are also realized in conformal field theory: even-rank TY categories arise via $\mathbb{Z}_2$-orbifolds of conformal nets \cite{B}. Their simple fusion rules and braiding data allow explicit computation of topological invariants and modular data such as their $S$ and $T$ matrices, making TY categories ideal toy models for phenomena such as anyon condensation, symmetry fractionalization, and defect excitations \cite{DS,TW1, TW2, CZ}. In short, they are well-studied and provide a bridge between algebraic category theory and concrete models of topological phases and defects.

Motivated by this, it is natural to ask how Tambara-Yamagami categories fit into broader families of fusion categories. Studying extensions of TY fusion rings therefore can provide a controlled framework to explore the new phenomena that arise when one moves just beyond the TY setting. In particular, generalized TY fusion rules can interpolate between pointed, Ising-like, and more complex near-group behaviors, while often retaining enough structure, such as almost-group-like tensor products, to allow for explicit computations.  In this paper, we focus on two extensions introduced in \cite{JL} and \cite{GLM}; each generalizes the TY fusion rules in a distinct way.

The extension $R_{p,G}$ introduced in \cite{JL} has as invertible basis elements a group, $G$, of square order and $p-1$ non-invertible elements for fixed positive integer $p$. Under suitable conditions, the resulting fusion category is a $\mathbb Z_p$-extension of $\Vect_{\Gamma}$, where each non-trivial component contains exactly one non-invertible simple object; setting $p=2$ recovers a Tambara-Yamagami category. In contrast, in \cite{GLM} the fusion ring $\GLM$ has as invertible basis elements an abelian group $\Gamma$ and as non-invertible basis elements, $\Gamma/2\Gamma$.The resulting category is a non-degenerate braided $\mathbb{Z}_2$-crossed extension of $\Vect_{\Gamma}$ of the form $\Vect_{\Gamma}\oplus \Vect_{\Gamma/2\Gamma}$, where the $\mathbb Z_2$-action on $\Gamma$ is given by $-\id$. When $\Gamma$ has odd order, this construction recovers a Tambara-Yamagami category; thus the focus of the present paper is the even-order case.

One useful tool in the study of fusion rings is the classification of their \emph{non-negative integer matrix representations}, or NIM-reps, for short. Their importance is partly motivated by their role in the boundary rational conformal field theory, where finding NIM-reps is equivalent to finding solutions to Cardy’s equation \cite{Behrend}. Mathematically, NIM-reps can be used as an effective method to detect algebra objects in the fusion category $\mathcal{C}$ underlying the fusion ring: algebra objects can be read off directly from the combinatorial data of a NIM-graph \cite{HR}. This provides insight into the module categories that can be constructed over $\mathcal{C}$, since algebra objects classify module categories \cite{Ost} and therefore play a central role in understanding extensions, orbifolds, and other constructions in tensor categories. Furthermore, they are key ingredients in classification problems in mathematical physics, such as full conformal field theories \cite{FRS} or extensions of vertex operator algebras \cite{CKM,HKL}, to mention a couple.

In this paper, we study the NIM-reps  of the generalizations of the Tambara-Yamagami fusion rings introduced in \cite{JL, GLM}, classify the irreducible ones, and obtain the shape of their algebra objects. This provides a first suggestion for the classification of the module categories over the corresponding fusion categories.  We note that, since our fusion rings are generalizations of TY, we recover the classification results for NIM-reps over TY \cite{HR}.
We include below a summary of our main results. 

\begin{thma}\label{thma}
For the Jordan-Larson fusion ring $R_{p,G}$, 
\begin{itemize}
  \item[(a)]  [Theorem \ref{prop:p divides n of orbits}]
 Let $M$ be an irreducible NIM-rep over $R_{p,G}$, and  let $m$ be the number of orbits of the associated $G$-action. Then $m$ divides $p$. 
    \item[(b)]  [Theorem \ref{prop: p-orbits RpG}] Let $p>2$ and let $m$ be a divisor of $p$. Irreducible NIM-reps over $R_{p,G}$ with $m$ orbits are parametrized by subgroups $H_1, \dots, H_m<G$ such that $|H_i||H_{\sigma_k(i)}|/|G|$ is a square integer for all $1\leq i,k\leq p-1$, where $\sigma_k:=(1 \dots m)^k$.
    \item[(c)] [Theorem \ref{thm:classification RpG}]    Two irreducible NIM-reps $M(H_1, \dots, H_m)$ and $M(H'_1, \dots, H'_m)$ are isomorphic if and only if there is a permutation $\tau\in S_m$  such that 
         $H_i$ is conjugate to $H'_{\tau(i)}$.
\end{itemize}
\end{thma}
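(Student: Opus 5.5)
The plan is to reduce everything to the permutation action of $G$ on the basis of $M$ and to the way the non-invertible elements $X_1,\dots,X_{p-1}$ move $G$-orbits around. I use the fusion rules of $R_{p,G}$ as I recall them from \cite{JL}: $gX_k=X_kg=X_k$; $X_kX_l=\sqrt{|G|}\,X_{k+l}$ when $k+l\not\equiv 0 \pmod p$; and $X_kX_{-k}=\sum_{g\in G}g$. I also use that $X_{-k}=X_k^{*}$ acts by the transpose matrix.

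Each $g\in G$ is invertible, so it acts by a permutation matrix on the basis $\{m_1,\dots,m_n\}$ of $M$. This decomposes the basis into orbits $O_1,\dots,O_m$ with $O_i\cong G/H_i$, where $H_i$ is a stabilizer. From $gX_k=X_k$, every column $X_k(m)$ is a $G$-invariant non-negative integer vector, hence a combination of orbit sums $\underline{O}_j=\sum_{x\in O_j}x$. From $X_kg=X_k$, the column $X_k(m)$ depends only on the orbit of $m$. So $X_k$ is described by a non-negative integer $m\times m$ matrix $A_k$ with $X_k(x)=\sum_j (A_k)_{ij}\underline{O}_j$ for $x\in O_i$.

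Next I will pin down $A_k$ using $X_kX_{-k}=\sum_g g$. Applied to $x\in O_i$, the right side equals $|H_i|\,\underline{O}_i$, which is supported on $O_i$ only. Non-negativity then forces the following: for each $i$, the set of $j$ with $(A_k)_{ij}\neq 0$ can be mapped by $X_{-k}$ only back into $O_i$. Combined with $A_{-k}=A_k^{T}$ up to the orbit-size factors coming from transposition, this shows that each row of $A_k$ has exactly one nonzero entry, and likewise each column. So $A_k$ is a scaled permutation $\pi_k$ of the orbits. The relation $X_kX_l=\sqrt{|G|}X_{k+l}$ gives $\pi_k\pi_l=\pi_{k+l}$, so $k\mapsto\pi_k$ is a homomorphism $\mathbb{Z}_p\to S_m$. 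Irreducibility means no proper union of orbits is stable under all $X_k$, so the image is a transitive cyclic subgroup. It therefore acts regularly on the $m$ orbits, giving $m\mid p$, which is (a). After relabelling, $\pi_k=\sigma_k=(1\dots m)^k$.

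For (b), write $(A_k)_{i,\sigma_k(i)}=a$. Transposition shows that the matrix coefficient of $X_{-k}$ from $O_{\sigma_k(i)}$ to $O_i$ is the same $a$. Computing $X_{-k}X_k(x)$ for $x\in O_i$ then gives $a\cdot a\,|O_{\sigma_k(i)}|\,\underline{O}_i=|H_i|\,\underline{O}_i$, so
\[ a^2=\frac{|H_i||H_{\sigma_k(i)}|}{|G|}. \]
Hence the square-integer condition is necessary, and $a$ is forced. Conversely, given subgroups with this property, I will define $M(H_1,\dots,H_m)$ on $\bigsqcup G/H_i$ by these formulas. I will then check the remaining fusion relations, mainly $X_kX_l=\sqrt{|G|}X_{k+l}$, which reduces to $a_{i,k}a_{\sigma_k(i),l}|O_{\sigma_k(i)}|=\sqrt{|G|}\,a_{i,k+l}$. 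This identity follows by substituting the square roots.

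I expect the main obstacle to be the step showing that $A_k$ is monomial. That step needs care with non-negativity and with how transposition interacts with the unequal orbit sizes. I will handle it by comparing supports of $X_{-k}X_k$ orbit by orbit. For (c), an isomorphism of NIM-reps is a basis bijection intertwining the action. It must be $G$-equivariant, so it matches orbits with conjugate stabilizers via some $\tau$. Conversely, conjugate stabilizers give isomorphic $G$-sets, and since the $X_k$-coefficients depend only on $|H_i|$, I will check that the induced bijection intertwines the $X_k$ as well.
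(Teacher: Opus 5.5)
Your parts (a) and (b) are correct and follow the paper's route: orbit-constant coefficients, the support argument from $X_{-k}X_k=\sum_g g$, $X_1$ acting as an $m$-cycle on the orbits, and $a^2=|H_i||H_{\sigma_k(i)}|/|G|$. Your row-and-column argument also proves that the induced map on orbits is injective, which the paper leaves implicit. One small correction: transposition introduces no orbit-size factors. Rigidity gives $(A_{-k})_{ji}=(A_k)_{ij}$ exactly, and this is what you in fact use in (b).

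The gap is in the converse direction of (c). Let $\psi$ be the $G$-set isomorphism sending $O_j$ to $O'_{\tau(j)}$. For $x\in O_i$ you get $\psi(X_k\rhd x)=a_{i,k}\,\underline{O'}_{\tau\sigma_k(i)}$, whereas $X_k\rhd\psi(x)=a'_{\tau(i),k}\,\underline{O'}_{\sigma_k\tau(i)}$. The observation that the coefficients depend only on the orders $|H_i|$ matches the scalars but not the supports. You also need $\tau\sigma_k=\sigma_k\tau$, so $\tau$ must centralize $(1\,\dots\,m)$ and hence be a power of it.

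This cannot be repaired for arbitrary $\tau$, because the statement as written is false once $m\ge 3$. Take $G=\mathbb{Z}_2\times\mathbb{Z}_2$, $p=m=3$, $H_1=\langle(1,0)\rangle$, $H_2=\langle(0,1)\rangle$, $H_3=\langle(1,1)\rangle$. Every ratio $|H_i||H_j|/|G|$ equals $1$, so $M(H_1,H_2,H_3)$ and $M(H_1,H_3,H_2)$ both exist, and $\tau=(2\,3)$ matches the stabilizers. Since $G$ is abelian, any $G$-equivariant bijection must preserve point stabilizers. But $X_1$ carries the $H_1$-orbit to the $H_2$-orbit in the first NIM-rep and to the $H_3$-orbit in the second, so no NIM-rep isomorphism exists.

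The paper's own proof of (c) has the same omission: it only compares the square roots $\sqrt{|H_i||H_{\sigma_k(i)}|/|G|}$. Your ``only if'' argument, once you also use that an isomorphism must intertwine $X_1$, proves the corrected statement. Namely, $M(H_1,\dots,H_m)\cong M(H'_1,\dots,H'_m)$ if and only if there is $r\in\mathbb{Z}_m$ with $H_i$ conjugate to $H'_{i+r}$ for all $i$. For such cyclic $\tau$ your intertwining check goes through, and for $m\le 2$ this coincides with the statement as written.
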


\begin{thma}\label{thmb}
For the Galindo-Lentner-Möller fusion ring GLM$(\Gamma,\delta)$, 
\begin{itemize}
    \item[(a)] [Theorem \ref{prop:atmost2}] 
  An irreducible NIM-rep over $\GLM$ has at most two $\Gamma$-orbits. 
  \item[(b)] [Theorem \ref{thm: 1 Gamma orbit}]
   Irreducible NIM-reps over GLM$(\Gamma,\delta)$ with a unique $\Gamma$-orbit are parametrized by the choice of a subgroup $H<\Gamma$ such that $\sqrt{|2\Gamma|}$ divides $|H\cap 2\Gamma|$, and the choice of $\tau_0$ satisfying Proposition \ref{prop:tau0}.
    \item[(c)] [Theorem \ref{thm:irrepNIMs-2Gammaorb}] Irreducible NIM-reps over $\operatorname{GLM}(\Gamma,\delta)$ with two $\Gamma$-orbits are parametrized by
   \begin{enumerate}[leftmargin=*,label=\rm{(\roman*)}] 
       \item\label{item-1-thm:irrepNIMs-2Gammaorb} A choice of subgroups $H_1, H_2<\Gamma$ such that $\delta \in \overline{H}_1\cap \overline{H}_2$ or $\delta \not\in \overline{H}_1\cup \overline{H}_2$, $|(\Gamma/H_1)[2]|=|(\Gamma/H_2)[2]|$, and  $\sqrt{|H_1\cap 2\Gamma| |H_2\cap 2\Gamma|/|2\Gamma|}$ is an integer. 
       \item\label{item-2-thm:irrepNIMs-2Gammaorb} A choice of $\tau_0$ satisfying Propositions \ref{prop:tau0} and Lemma \ref{rk: switch of gamma orbits}.
   \end{enumerate}
  \item[(d)] [Theorems \ref{thm: class unique Gamma orbit} and \ref{thm: class two gamma orbits}] Two irreducible NIM-reps $M(\{H_i\}_{i=1}^k,\tau_0)$ and $M(\{H_i'\}_{i=1}^k,\tau_0)$ are isomorphic if and only if (up to reordering) $H_k=H_k'$ and $\tau_0=\tau_0'$, where $k$ is the number of $\Gamma$-orbits.
\end{itemize}
\end{thma}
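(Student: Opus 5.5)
Since Theorem~\ref{thmb} collects the results proved in the later sections, the plan is to establish parts (a)--(d) in order, each from the structure of the restriction of a NIM-rep to the pointed part. Let $M$ be an irreducible NIM-rep over $\GLM$ with basis $\{m_j\}$. The invertible basis elements $a\in\Gamma$ act by permutation matrices, since $a\otimes(-a)=\one$ and a non-negative integer matrix with non-negative integer inverse is a permutation. So the basis of $M$ decomposes into $\Gamma$-orbits, each isomorphic to $\Gamma/H_i$ for a stabilizer subgroup $H_i<\Gamma$; the stabilizer is independent of the chosen point because $\Gamma$ is abelian. Next, each non-invertible $X_\rho$, $\rho\in\Gamma/2\Gamma$, commutes with the $\Gamma$-action up to the shift $a\otimes X_\rho=X_{\rho+\bar a}$. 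Hence $X_\rho$ sends an orbit to a non-negative combination of orbits, and the orbits form a graph whose edges are given by the non-invertibles.

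\textbf{(a).} I will use the fusion rule $X_\rho\otimes X_\sigma=\bigoplus_{\bar a=\rho+\sigma+\delta}a$, or whatever the exact form is in \cite{GLM}. Its key feature is that $X_\rho\otimes X_\rho^*$ contains every element of a coset of $2\Gamma$. Applying $X_\rho^*X_\rho$ to a basis vector $m$ in an orbit $O$, and comparing the total multiplicity (Frobenius--Perron dimensions) with the grading by $\mathbb{Z}_2$ (pointed versus non-pointed part), I expect to show the following. The $\mathbb{Z}_2$-grading forces $X_\rho(O)$ to lie in a single orbit $O'$. Moreover, $X_\sigma(O')$ returns to $O$ for all $\sigma$. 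The span of $O\cup O'$ is then a sub-NIM-rep, so irreducibility gives at most two orbits.

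\textbf{(b) and (c).} In each case I will write $X_\rho m_{g H_i}=\sum_j c^{\rho}_{ij}(g)\, m_j$. Equivariance reduces these coefficients to data on a single point of each orbit, which is where $\tau_0$ enters: it records the image of a base point under $X_0$. Then I will impose the relation $X_\rho X_\sigma=\sum a$ on a base point. The right-hand side sends $m_{H_i}$ to $|H_i\cap(\text{coset of }2\Gamma)|$ copies of each point of a coset. The left-hand side is a square of a matrix coefficient. Equating multiplicities gives the square condition $|H\cap 2\Gamma|^2=|2\Gamma|\cdot k^2$ in the one-orbit case, i.e.\ $\sqrt{|2\Gamma|}$ divides $|H\cap 2\Gamma|$. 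In the two-orbit case it gives integrality of $\sqrt{|H_1\cap2\Gamma||H_2\cap2\Gamma|/|2\Gamma|}$. The condition on $\delta$ comes from which cosets are hit, since $\delta\in\overline{H}_i$ decides whether the $\delta$-twisted coset meets $H_i$. The condition $|(\Gamma/H_1)[2]|=|(\Gamma/H_2)[2]|$ comes from comparing the number of $X$-images in each orbit. I then verify the converse, that the constructed matrices satisfy all fusion relations; this is Propositions~\ref{prop:tau0} and Lemma~\ref{rk: switch of gamma orbits}.

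\textbf{(d).} An isomorphism of NIM-reps is a basis permutation commuting with all actions. Hence it maps orbits to orbits and preserves stabilizers, which are canonical since $\Gamma$ is abelian, so no conjugation is needed; it also preserves the base-point data $\tau_0$. Conversely, equal data give equal matrices.

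I expect the main obstacle to be (b)/(c): pinning down exactly which choices of $\tau_0$ are compatible with all relations $X_\rho X_\sigma$ simultaneously. This is a cocycle-type consistency problem over $\Gamma/2\Gamma$, and the $2$-torsion of $\Gamma/H_i$ makes it delicate.
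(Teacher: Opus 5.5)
Part (d) of your proposal is essentially the paper's argument, and the overall structure is reasonable. The decisive steps of (a)--(c), however, are asserted rather than proved, and the mechanism you name for the key step of (a) does not work.

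\textbf{The single-orbit step in (a).} The $\mathbb{Z}_2$-grading of $\GLM$ induces no grading on a NIM-rep: in every one-orbit NIM-rep, $X_{\overline 0}$ maps the unique $\Gamma$-orbit to itself. Comparing total multiplicities gives only one scalar identity. Neither can force $X_\rho\rhd O$ into a single orbit. What works is a support argument from positivity and rigidity. Since $X_\rho X_\rho^*=\sum_{t\in 2\Gamma}t$, take any $m'$ in the support of $X_\rho\rhd m$. Rigidity puts $m$ in the support of $X_\rho^*\rhd m'$. Hence $X_\rho\rhd m$ is dominated coefficientwise by $X_\rho X_\rho^*\rhd m'=\sum_{t\in 2\Gamma}t\rhd m'$, which lies on the single $2\Gamma$-orbit of $m'$. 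This is \eqref{eq:coeff-condition-GLM} and Lemma~\ref{lemma:unique-orbit-non-invertible-GLM}. It gives the finer statement at the level of $2\Gamma$-orbits, which you need later anyway: without it, the identity you invoke in (b)/(c) is the sum \eqref{eq:(1)}, not a single square.

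\textbf{The refinement to $2\Gamma$-orbits in (b)/(c).} This is what makes $\tau_0$ a permutation at all. ``The image of a base point under $X_0$'' is a vector, and with one $\Gamma$-orbit there is nothing to permute at the level of $\Gamma$-orbits. Because $X_{\overline g}\,h=X_{\overline g}=h\,X_{\overline g}$ for $h\in 2\Gamma$, each $X_{\overline g}$ is constant on $2\Gamma$-orbits and has $2\Gamma$-invariant output. Combined with the support lemma, $X_{\overline g}\rhd m$ is a constant multiple of the sum over one $2\Gamma$-orbit, so $X_{\overline g}$ induces the permutation $\tau_0\sigma_{\overline g}$ of $2\Gamma$-orbits.

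With this in hand, the ``cocycle-type consistency problem'' you leave open is immediate. The relation $gX_{\overline h}=X_{\overline g+\overline h}=X_{\overline h}g$ becomes $\tau_0\sigma_g=\sigma_g\tau_0$. The product rule $X_{\overline g}X_{\overline h}=\sum_{\overline t=\delta+\overline g+\overline h}t$ becomes $\tau_0^2=\sigma_\delta$. The multiplicity is forced by \eqref{eq:(1)} and is then automatically consistent. Citing Proposition~\ref{prop:tau0} for the converse, as you do, leaves exactly this unproved.

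\textbf{The $\delta$-condition in (c).} It is not about ``which cosets are hit''. One has $\delta\in\overline{H_i}$ iff $\sigma_\delta$ fixes every $2\Gamma$-orbit of $\mathcal M^i$. Since $\tau_0$ commutes with $\sigma_\delta$ and carries the $2\Gamma$-orbits of $\mathcal M^1$ onto those of $\mathcal M^2$ (Lemma~\ref{rk: switch of gamma orbits}), $\sigma_\delta$ fixes all of one side iff it fixes all of the other.

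A smaller slip: $\sum_{t\in C}t\rhd m$ has multiplicity $|H_i\cap 2\Gamma|$ at each point of $C\rhd m$, for every coset $C$ of $2\Gamma$. It is not $|H_i\cap C|$, which is $0$ when $C\not\subseteq H_i+2\Gamma$.
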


Proofs of the results of Theorem A stated above can be found in Section \ref{sec:JordanLarson}. The proofs of the results of Theorem B can be found in Section 4. 

We note that Theorem \ref{thma}[(a)] and Theorem \ref{thmb}[(a)] point to a strong connection between the grading of a fusion category and the number of orbits permitted in an irreducible NIM-rep. Indeed, the Jordan–Larson and Galindo–Lentner–Möller fusion rings categorify to $\mathbb Z_p$- and $\mathbb Z_2$-graded fusion categories, respectively \cite{GLM,JL}. We believe these results highlight a deeper structural connection that merits further investigation in future work.

This paper is organized as follows. In Section \ref{sec:basicnotions}, we recall some basic terminology for this project. In Sections \ref{sec:JordanLarson} and \ref{sec:GLM}, we compute and classify the irreducible NIM-reps of each fusion ring, and detect the corresponding algebra objects for each case.

\subsection*{Acknowledgements} 
The authors thank the organizers of the Women in Mathematical Physics 3 research and mentoring program from which this collaboration was formed. A. Czenky\ was partially supported by the Simons Collaboration Grant No. 999367. E. McGovern was partially supported by NSF Grant  DMS-2039316.
 M. M\"uller thanks the excellent working conditions at the Department of Mathematics at Indiana University Bloomington, where part of this research was done. M. M\"uller was partially supported by Simons Foundation Award 889000 as part of the Simons Collaboration on Global Categorical Symmetries. 
 A. Ros Camacho thanks Cardiff University and their University Research Leave Scheme 2025/26 for support.

\section{Basic notions}\label{sec:basicnotions}

We start by setting some notation conventions:  $S_m$ denotes the permutation group of order $m!$,  and $\mZ_{+}$ the semi-ring of positive integers with zero. We assume familiarity of the reader with the notions introduced in \cite[Chapter 3 and 7]{EGNO}. Some references for NIM-reps are \cite{Behrend,GannonNIM,Gannon,Gannonbook}.

Let $(R,B)$ be a fusion ring with basis $B=\lbrace b_i \rbrace_{i \in I}$. A \textit{non-negative integer matrix representation}  (\text{NIM-rep}),  of $(R,B)$ is a $\mZ_+$-module $(T,M)$ (where M = $\{m_\ell\}_{\ell \in L}$ is a $\mathbb{Z}$-basis for $T$) that satisfies the \emph{rigidity condition}:
 let T have a symmetric bilinear form $(-,-):M \times M \to \mZ$ defined by $(m_\ell,m_k)=\delta_{\ell,k}$,
for any $\ell,k \in L$.
Then we must have, for any $i\in I, \ell,k\in L$, 
\[
(b_i\act m_\ell,m_k) = (m_\ell,b_{i^*}\act m_k),
\]
where $\act$ denotes the action of the basis $B$ on the $\mathbb{Z}_+$-module, and $^*$ denotes the involution of the fusion ring as a based ring. We may refer to a NIM-rep simply as $M$ instead of $\left( T,M \right)$.

Note that, for $t \in T, m_\ell \in M$, the symmetric bilinear form $(t,m_\ell)$ counts the multiplicity of $m_\ell$ in the basis decomposition of $t$. Then $(t,t)=1$ if and only if $t \in M$.

A \textit{NIM-rep morphism} between two NIM-reps $\left( T,M \right)$ and $\left( T',M' \right)$ over a given fusion ring $\left( R,B \right)$ is a function $\psi \colon M \to M'$ inducing a $\mZ$-linear map between the modules. If $\psi$ is a bijection, and the induced map is an isomorphism of $R$-modules, then we say that the NIM-reps are \textit{equivalent}, see \cite{DB}. Also, the \textit{direct sum} of two such NIM-reps is the $R$-module $T \oplus T'$ with  basis $M \oplus M'$. Similarly, we can define \textit{sub-NIM-rep}. A NIM-rep is called \textit{irreducible} if it has no proper sub-NIM-reps. See \cite[ \S 3.4]{EGNO}, \cite[Lemma 2.1]{Ost2}.

Given a NIM-rep $(T,M)$ over a fusion ring $(R,B)$, a \textit{NIM-graph} (or  \textit{fusion graph}) is constructed with a node for each element of the basis $M$, and a directed arrow with source $m_\ell$ and target $m_k$, labeled by an element $b_i \in B$, for every copy of $m_k$ in $b_i \vartriangleright m_\ell$.  Every node in a NIM-graph will have a self-loop labeled by the ring identity, which we omit sometimes for simplicity. The NIM-graph allows us to visualize the irreducibility of the corresponding NIM-rep, as a NIM-rep is irreducible if and only if the NIM-graph is connected.

In this paper, we introduce a new combinatorial tool for the study of NIM-reps, defined below.

\begin{definition} Let $R$ be a fusion ring with basis $B$ given by the union of a set of invertible elements forming a group $G$ and some non-invertible elements. The \textit{NIM-orbit graph} of a NIM-rep $M$ of $R$ shows the action of the non-invertible elements, while obscuring the action of the elements of $G$. We get the NIM-orbit graph from the NIM-graph by first contracting all edges labeled by elements of the group (this process results in a single vertex of the graph for each orbit under the $G$-action). If after this step there are multiple edges between two $G$-orbits labeled by a non-invertible element, we delete all but one of these edges.  
\end{definition}

By collapsing the action of the invertible elements, the NIM-orbit graph isolates the essential combinatorial structure coming from the action of the non-invertible objects. In the present work, this plays a central role in organizing and simplifying the classification arguments. We expect that the same approach can be adapted to other fusion rings with a non-trivial invertible component, particularly in classification problems where the fusion ring has a graded structure.

\begin{example}\label{ex: Jordan-Larson NIM graph}
    Suppose we have a Jordan-Larson fusion ring $R_{p,G}$ as defined in Section \ref{sec:JordanLarson}, with $G=\mathbb{Z}_2\times \mathbb{Z}_2$ and $p=4$. The basis of this ring are the elements of $G$ and three non-invertibles: $X_1, X_2, X_3$. Suppose the action of $G$ is determined by the stabilizer subgroup $H=\langle (1,0)\rangle$. Example \ref{ex:Z2xZ2p4NIM-orbitgraph} shows that 
     this induces a 2-dimensional representation where $X_k \rhd m_j= m_1 + m_2$ for $1\leq k \leq 3$ and $j=1,2$. The NIM-graph associated to this case would then be:
          $$\begin{tikzcd}[column sep=4em]
m_1 \bullet
  \arrow[loop,dashed, distance=2.4em, in=125, out=55]
  \arrow[loop, rightsquigarrow, distance=3.6em, in=135, out=45]
  \arrow[loop, distance=5.0em, in=145, out=35]
  \arrow[color=red, loop, distance=5.2em, in=-145, out=-35]
  \arrow[color=blue, loop, distance=2.8em, in=-125, out=-55]
  \arrow[r, leftrightarrow, shift left=10pt, shorten >=1.5ex, shorten <=1.5ex]
  \arrow[r, leftrightarrow, shift left=4pt, dashed, shorten >=1.5ex, shorten <=1.5ex]
  \arrow[r, leftrightsquigarrow, shift right=6pt,
        color={rgb,255:red,39;green,190;blue,125}]
  \arrow[r, leftrightarrow, shift right=12pt,
        color={rgb,255:red,39;green,190;blue,125}, shorten >=1.5ex, shorten <=1.5ex]
& \bullet m_2
  \arrow[loop,dashed, distance=2.4em, in=125, out=55]
  \arrow[loop, rightsquigarrow, distance=3.6em, in=135, out=45]
  \arrow[loop, distance=5.0em, in=145, out=35]
  \arrow[color=red, loop, distance=5.2em, in=-145, out=-35]
  \arrow[color=blue, loop, distance=2.8em, in=-125, out=-55]
   \arrow[l, leftrightsquigarrow, shift left=2pt]
\end{tikzcd}$$
where blue and red arrows indicate the elements in $H$, the green arrows indicate the elements in $G - H$, and the black arrows indicate the actions of the noninvertible elements $X_1, X_2, X_3$. If we collapse the NIM-graph onto its one orbit, the corresponding NIM-orbit graph would be:
$$\begin{tikzcd}
\mathcal{O}_1 \arrow[loop, dashed, distance=3em, in=125, out=55] \arrow[loop, dashed, distance=3em, in=125, out=55] \arrow[loop, rightsquigarrow, distance=3em, in=-70, out=-5] \arrow[loop, distance=3em, in=-170, out=-95] \arrow[loop, distance=3em, in=-170, out=-95]
\end{tikzcd}\hbox{\raisebox{-0.5cm}{.}}$$
\end{example}

Finally, given a categorifiable fusion ring, we can construct a \textit{fusion category} - that means, a semi-simple finite tensor category. Inside these categories, we recall the following objects: an \textit{algebra} in a fusion category $\cC=\left( \cC,\otimes, \one,\alpha,\lambda,\rho \right)$ is a triple $\left( A,m,u \right)$, where $A \in   \cC $, and  $m \colon A \otimes A \to A$ (the multiplication), $u \colon \mathbbm{1} \to A$ (the unit) are two morphisms in $\cC$,
    satisfying unitality and associativity constraints, see \cite[Definition 7.8.1]{EGNO}.

There has been some efforts on performing exhaustive classifications of algebra objects in diverse families of fusion categories with different methods, \cite{HLRC,Nat,Ost,WINART} to mention a few. As mentioned in the introduction, it is a well known result \cite{Ost2} that classifying algebra objects in fusion categories is the same as classifying some module categories over these fusion categories. Explicitly, if $\mathcal M$ is an indecomposable semisimple $\mathcal C$-module category, and $N\in \mathcal M$ is such that it generates the Grothendieck group of $\mathcal M$, $\text{Gr}(\mathcal M)$, as a based $\mathbb Z_+$-module over the Grothendieck ring of $\mathcal C$, $\text{Gr}(\mathcal{C})$, then $\mathcal M$ is equivalent to the category $\operatorname{Mod}_{\mathcal C}-A$ of right $A$-modules in $\mathcal C$, where $A:=\underline{\Hom}(M,M)$ denotes the internal Hom space of $M$ \cite{Ost2}. 
 
 This result can be translated to NIM-reps language in the following way. Suppose  $\text{Gr}(\mathcal M)$, generated as a NIM-rep of $\text{Gr}(\mathcal{C})$ by a certain element $m$. Given another basis element of this Grothendieck group, say $m_i$, then there is a basis element of the fusion ring $b_j$ satisfying that $b_j \act m=m_i$. This inspires the following definition:

\begin{definition}\label{NIMadmis}\cite[Definition 2.36]{HR}
    A NIM-rep $\left(A,M \right)$ over the fusion ring $\left( R,B \right)$ of a fusion category is called \textit{admissible} if there exists some $m_0 \in M$ such that for any $m_i \in M$, there exists some $b_j \in B$ such that $b_j \rhd m_0=m_i$. 
\end{definition}

Given such an admissible NIM-rep, then it is possible \cite[Proposition 2.36]{HR} to construct an algebra object by taking $\textstyle{\bigoplus_{i \in I} a_i b_i}$ where $a_i$ is the number of self-loops labeled by $b_i$ in the NIM-graph of the NIM-rep.
Moreover, if it is the underlying NIM-rep of $\operatorname{Mod}_{\mathcal C}-A$ as in the setup for \cite{Ost2}, then $\textstyle A=\bigoplus_{i \in I} a_i b_i$. Hence, while it does not provide a complete description of the structure maps, this procedure  
 provides an intuitive way to detect these algebras thoroughly in any given fusion category, and a first suggestion towards the classification of the module categories coming from these fusion rings.

\section{Jordan-Larson generalization of Tambara-Yamagami}\label{sec:JordanLarson}

In this section, we study the fusion ring $R_{p,G}$ as defined in \cite{JL} by Jordan and Larson, classifying all NIM-reps over these rings  and detecting the algebra objects associated to the NIM-reps. 

\begin{definition} \cite{JL}
    Let $G$ be a finite group of square order and let $p$ be a positive integer. The fusion ring $R_{p,G}$ has as basis the elements of $G$ and $X_1, \dots, X_{p-1}$, with the relations
  \begin{align*}
      &g\, X_i = X_i = X_i \, g, &X_i^*=X_{p-i},\\
      &X_i\, X_j= \begin{cases}
      \sqrt{|G|} X_{i+j}   &  \text{if } i+j \ne p\\
       \textstyle  \oplus_{g\in G} g  & \text{if } i+j = p.
      \end{cases}
  \end{align*}
\end{definition}

Here we note that the index $i$ in $X_i$ is always taken modulo $p$. The assumption that $|G|$ is a square can be dropped when $p=2$. The ring $R_{2,G}$ is the Tambara-Yamagami fusion ring. In this sense, for $p>2$ these rings are generalizations of the Tambara-Yamagami fusion rings. 

In \cite{JL} the authors also show that when $(|G|,p)=1$, categorifications of $R_{p,G}$ can be parame\-trized by group data \cite[Proposition 3.1]{JL}. They also note that any categorification of $R_{p,G}$ will be $\mathbb Z_p$-graded. For the particular case $p=3$, they are able to fully classify the categorifications of $R_{3,G}$ for any $G$ such that $(|G|,3)=1$ \cite[Theorem 1.4]{JL}, thus obtaining a family of $\mathbb Z_3$-graded fusion categories generalizing the $\mathbb Z_2$-graded Tambara-Yamagami categories.

\subsection{Preliminary results for irreducible NIM-reps over \texorpdfstring{$R_{p,G}$}{RpG}}\label{sec: preliminary RpG}
In this section, we set up some notation for working with a NIM-rep over $R_{p,G}$, and analyze the number of orbits the associated $G$-action can have.

The action of $R_{p,G}$ on a NIM-rep $M$ consists of the action of both the group elements and the non-invertible elements $X_1, \dots, X_p.$ If we restrict ourselves to the group component, we obtain a $G$-action on the NIM-rep basis \cite{HR}. We can then partition $M$ into $G$-orbits as follows:
\begin{align}\label{eq:number of orbits}
  \textstyle   M= \bigsqcup_{i=1}^m \{m_\ell^i\}_{1\leq \ell \leq |G:H_i|}, 
\end{align}
where the $i$ index is moving around the $m$ distinct orbits of the $G$-action, each of which is defined by a subgroup $H_i<G$. We will denote the corresponding orbit by $\mathcal O_i:=\{m_\ell^i\}_{1\leq \ell\leq |G:H_i|}$.

The following result mimics \cite[Proposition 3.9]{HR} and its proof is analogous.
\begin{lemma} \label{lemma: action on orbits-JL}
 Let $M$ be an irreducible NIM-rep over $R_{p,G}$, and fix an orbit label $i$. Then for any $1\leq k \leq p-1$, we have $X_k \rhd m_{\ell_1}^i = X_k \rhd m_{\ell_2}^i$, for all $1\leq \ell_1, \ell_2\leq |G:H_i|$.
\end{lemma}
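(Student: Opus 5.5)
The key observation is that each $X_k$ absorbs group elements: $g\,X_k = X_k$ in $R_{p,G}$ for every $g\in G$. We therefore plan to deduce the statement directly from associativity of the module action, with no counting argument needed.

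Fix the orbit $\mathcal O_i$. By definition, any two of its elements $m_{\ell_1}^i$ and $m_{\ell_2}^i$ lie in the same $G$-orbit. Hence there is some $g\in G$ with $g\rhd m_{\ell_1}^i = m_{\ell_2}^i$. Here we use that the restriction of the $R_{p,G}$-action to the basis elements of $G$ gives a genuine $G$-action on the basis $M$ (as recalled from \cite{HR}). The reason is that an invertible element $g$ satisfies $(g\rhd m, g\rhd m) = (m, g^{*}g\rhd m) = (m,m)=1$ by rigidity, so $g\rhd m$ is again a basis element.

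Since $T$ is an $R_{p,G}$-module, the action is associative: $(ab)\rhd t = a\rhd(b\rhd t)$. Using the relation $X_k\,g = X_k$ from the definition of $R_{p,G}$, we compute
\[
X_k\rhd m_{\ell_2}^i = X_k\rhd (g\rhd m_{\ell_1}^i) = (X_k\, g)\rhd m_{\ell_1}^i = X_k\rhd m_{\ell_1}^i .
\]
This holds for every $1\le k\le p-1$, which is exactly the claim. Irreducibility is not actually used; the hypothesis is harmless and simply matches the setting of \cite[Proposition 3.9]{HR}.

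The only step needing any care is the justification that group basis elements permute $M$, so that the $G$-orbits in \eqref{eq:number of orbits} are well defined. This follows from rigidity together with $g^*=g^{-1}$ and the non-negativity of the coefficients, as indicated above.
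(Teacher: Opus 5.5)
Your proof is correct and is essentially the paper's argument: the paper defers to \cite[Proposition 3.9]{HR}, and its written-out analogue for $\operatorname{GLM}(\Gamma,\delta)$ (Lemma~\ref{lemma: action on orbits-GLM}) is exactly your computation $X_k\rhd(g\rhd m_{\ell_1}^i)=(X_k\,g)\rhd m_{\ell_1}^i=X_k\rhd m_{\ell_1}^i$. Your remarks that irreducibility is not needed, and that rigidity forces group elements to permute $M$, are both accurate.
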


We will thus denote $c_{i,j}^k:=(X_k \rhd m_\ell^i, m_t^j),$ dropping the $\ell$ and $t$ labels. Note also that by the rigidity condition, $c_{i,j}^{k}=c_{j,i}^{p-k}.$ With this notation, we have that
\begin{align*}
   \textstyle X_k \rhd m_\ell^i =\sum_{j=1}^n c_{i,j}^k \sum_{\ell=1}^{|G:H_j|} m_\ell^j,
\end{align*}
and so acting on both sides by $X_{p-k}$ we obtain
\begin{align*}
\textstyle |H_i| \sum_{t=1}^{|G:H_i|} m_t^i=   \sum_{g\in G} g \rhd m_\ell^i = \sum_{j=1}^n c_{i,j}^k |G:H_j| X_{p-k} \rhd m_\ell^j.
\end{align*}
Counting the multiplicities of the $i$-labeled and $q$-labeled orbits on either side, for $q\ne i$, we get
\begin{align}\label{eq: coeff counter}
    &\textstyle |H_i|=\sum_{j=1}^n c_{i,j}^k c_{j,i}^{p-k} |G:H_j|, &\text{ and }
  &&\textstyle 0= \sum_{j=1}^n c_{i,j}^k c_{j,q}^{p-k} |G:H_j|.
\end{align}

Since the summands are all non-negative integers, this implies that for all $j$ and $i\ne q$, 
\begin{align}\label{eq: coeff condition}
    c_{i,j}^kc_{j,q}^{p-k}=0.
\end{align}

\begin{lemma}\label{lemma:non-zero c}
For a fixed $X_k$ and a fixed orbit  $i$, there exists an orbit $j$ such that $c_{i,j}^k\ne 0.$
\end{lemma}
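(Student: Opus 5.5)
The proof is by contradiction, using the first identity in \eqref{eq: coeff counter}. Fix $1\leq k\leq p-1$ and an orbit label $i$, and suppose that $c_{i,j}^k=0$ for every orbit $j$.

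Recall how that identity was obtained. We act by $X_{p-k}$ on the expression of $X_k\rhd m_\ell^i$ and use the fusion rule $X_{p-k}X_k=\bigoplus_{g\in G} g$. Counting the multiplicity of the orbit $\mathcal O_i$ on both sides gives
\[
|H_i|=\sum_{j=1}^m c_{i,j}^k\, c_{j,i}^{p-k}\, |G:H_j|.
\]
Under our assumption every summand on the right-hand side vanishes, so the sum is $0$. But $|H_i|\geq 1$, since $H_i$ is a subgroup and contains the identity. This is a contradiction, so some $c_{i,j}^k$ must be nonzero.

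A more direct route avoids the displayed identity altogether. If all $c_{i,j}^k$ vanish, then $X_k\rhd m_\ell^i=0$. Applying $X_{p-k}$ then gives
\[
0=(X_{p-k}X_k)\rhd m_\ell^i=\sum_{g\in G} g\rhd m_\ell^i .
\]
This is impossible: each $g\rhd m_\ell^i$ is a basis element, because $g$ is invertible, and the sum has nonnegative coefficients with $|G|\geq 1$ terms, so it is a nonzero element of $T$.

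The only point to check is that the action respects the fusion product, that is, $(X_{p-k}X_k)\rhd m=X_{p-k}\rhd(X_k\rhd m)$. This holds because a NIM-rep is a module over $R_{p,G}$. There is no real obstacle: the statement is essentially a nondegeneracy observation, and we expect no case analysis to be needed.
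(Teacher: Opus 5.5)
Your proposal is correct and follows the paper's argument. The paper simply notes that $X_k\rhd m_\ell^i$ must be a nonzero combination of basis elements, and your second route supplies the justification the paper leaves implicit: apply $X_{p-k}$ and use $X_{p-k}X_k=\bigoplus_{g\in G}g$, whose action on $m_\ell^i$ already contains $m_\ell^i$ itself. Your first route, via $|H_i|=\sum_j c_{i,j}^k c_{j,i}^{p-k}|G:H_j|$, is an equally valid repackaging of the same observation.
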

\begin{proof}
    This follows directly from the definition of $c_{i,j}^k,$ as $X_k\rhd m_\ell^i$ must be a non-zero sum of basis elements in $M$.
\end{proof}

\begin{lemma}\label{lemma:unique orbit}
 For a fixed orbit $1\leq i \leq p$ and fixed $1\leq k\leq p-1,$ the summands in $X_k \rhd m_{\ell}^i$  are contained in a unique orbit. 
\end{lemma}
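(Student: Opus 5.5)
We argue by contradiction, using the vanishing condition \eqref{eq: coeff condition} together with Lemma \ref{lemma:non-zero c}. Fix an orbit $i$ and $1\leq k\leq p-1$, and suppose that $X_k \rhd m_\ell^i$ has summands in two distinct orbits $j\neq j'$, that is, $c_{i,j}^k\neq 0$ and $c_{i,j'}^k\neq 0$.

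The first step is to transport this to the dual element $X_{p-k}$ via rigidity. We have $c_{i,j}^k = c_{j,i}^{p-k}$, and likewise for $j'$. Hence $X_{p-k}\rhd m^j_t$ contains orbit $i$, and $X_{p-k}\rhd m^{j'}_t$ also contains orbit $i$.

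Next we apply \eqref{eq: coeff condition} with the roles of $k$ and $p-k$ exchanged. This is legitimate because that derivation works for every $1\leq k\leq p-1$: we compute $X_{k}X_{p-k}=\bigoplus_g g$ acting on an orbit $j$, counting the multiplicity of a foreign orbit $q\neq j$. Concretely, act by $X_{p-k}$ on $m^j_t$ and then by $X_k$. This gives $\sum_{g} g\rhd m^j_t$, which is supported only on orbit $j$. So for $q\neq j$ we get
\[
0=\sum_{s} c_{j,s}^{p-k}\,c_{s,q}^{k}\,|G:H_s|,
\]
and in particular $c_{j,i}^{p-k}c_{i,q}^k=0$ for all $q\neq j$. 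Taking $q=j'$, and using $c_{j,i}^{p-k}=c_{i,j}^k\neq 0$, we obtain $c_{i,j'}^k=0$, a contradiction.

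Thus all summands of $X_k\rhd m^i_\ell$ lie in a single orbit. This orbit exists, since $X_k\rhd m^i_\ell\neq 0$ by Lemma \ref{lemma:non-zero c}, and it is unique by the argument above. By Lemma \ref{lemma: action on orbits-JL}, this orbit does not depend on $\ell$.

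The only delicate point is the bookkeeping of indices: \eqref{eq: coeff condition} as written fixes $k$, and we need the version with $k$ and $p-k$ swapped and with the middle index equal to $i$. I would state explicitly that \eqref{eq: coeff condition} holds for every $k$, and then substitute $k\mapsto p-k$, $i\mapsto j$, $j\mapsto i$, $q\mapsto j'$. Everything else follows directly from rigidity and non-negativity.
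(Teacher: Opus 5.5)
Your proof is correct and is essentially the paper's argument: both combine Lemma~\ref{lemma:non-zero c}, the rigidity identity $c_{i,j}^k=c_{j,i}^{p-k}$, and the vanishing condition that comes from $X_kX_{p-k}=\bigoplus_{g\in G} g$. The only difference is bookkeeping. The paper first uses \eqref{eq: coeff condition} to place $X_{p-k}\rhd m^j_s$ inside $\mathcal O_i$ and then acts by $X_k$, whereas you invoke the $k\leftrightarrow p-k$ instance of that condition directly, which is exactly what the paper's ``acting with $X_k$'' step amounts to.
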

\begin{proof}
By Lemma \ref{lemma:non-zero c}, there exists an index $j$ such that $0\ne c_{i,j}^k=c_{j,i}^{p-k}.$ Equation \eqref{eq: coeff condition} then implies that $c_{j,q}^{p-k}=0$ for all $q\ne i,$  and so the summands in $X_{p-k}\rhd m_s^j$ are all contained in $\mathcal O_i$. It follows from acting with $X_k$ and Lemma \ref{lemma: action on orbits-JL} that the summands in $X_k \rhd m_\ell^i$ are contained in $\mathcal O_j$.
\end{proof}

The following result is immediate from the previous lemma. 

\begin{corollary}\label{corollary:connected to p-1 orbits}
In the NIM-graph, any orbit $\mathcal O_i$ is connected to at most $p-1$ orbits.
\end{corollary}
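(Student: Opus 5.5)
By Lemma \ref{lemma:unique orbit}, for each fixed orbit $\mathcal O_i$ and each $1\leq k\leq p-1$ there is a unique orbit, which I will call $\mathcal O_{j(i,k)}$, containing all summands of $X_k \rhd m_\ell^i$. By Lemma \ref{lemma:non-zero c} this orbit exists, and by Lemma \ref{lemma: action on orbits-JL} it does not depend on $\ell$. So the non-invertible elements $X_1,\dots,X_{p-1}$ together produce at most $p-1$ target orbits, namely $\mathcal O_{j(i,1)},\dots,\mathcal O_{j(i,p-1)}$.

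The group elements never move $m_\ell^i$ out of $\mathcal O_i$, since orbits are $G$-stable by definition. In the NIM-graph, an edge leaving a node of $\mathcal O_i$ toward a different orbit must therefore carry one of the labels $X_k$. Edges entering $\mathcal O_i$ from some $\mathcal O_q$ with label $X_k$ need a word too. By the rigidity relation $c_{q,i}^k=c_{i,q}^{p-k}$, such an edge corresponds to an edge from $\mathcal O_i$ to $\mathcal O_q$ labeled $X_{p-k}$. So the set of orbits adjacent to $\mathcal O_i$, in either direction, is exactly $\{j(i,k): 1\leq k\leq p-1\}\setminus\{i\}$.

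This set has at most $p-1$ elements, which gives the claim; in fact one obtains at most $p-1$ orbits other than $\mathcal O_i$ itself. The only point needing care is the identification of incoming edges with outgoing ones through rigidity, so that edges of both directions are counted once. I do not expect a real obstacle here.
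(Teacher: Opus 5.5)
Your proof is correct and is essentially the paper's argument, which derives the corollary immediately from Lemma~\ref{lemma:unique orbit}: each of the $p-1$ elements $X_k$ sends $\mathcal O_i$ into a single orbit. Your extra step, using the rigidity relation $c_{q,i}^k=c_{i,q}^{p-k}$ to show that incoming edges add no new orbits, is valid and makes explicit what the paper leaves implicit.
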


\begin{remark}\label{rk: X1 does not fix orbits} Suppose the action of $R_{p,G}$ on an irreducible NIM-rep $M$ has more than one orbit. Then
   the object $X_1$ cannot fix any orbit, i.e. the summands in $X_1\rhd m^i_\ell$  must be contained in $\mathcal O_j$ with $j\ne i$. This follows since if $X_1$ fixes the orbit $\mathcal O_i,$ then by the fusion rules $X_t$ can be generated by $X_1^{t}$ and thus also fixes $\mathcal O_i$ for all $t,$ which contradicts irreducibility.
\end{remark}

\begin{remark}\label{rk: Xk does not fix orbits}
    By the same argument as the previous remark, if $(k,p)=1$ then $X_k$ cannot fix any orbit in an irreducible NIM-rep. 
\end{remark}
The next result allows us to link the number of an irreducible NIM-rep over $R_{p,G}$ to  $p$.

\begin{theorem}\label{prop:p divides n of orbits}
 Let $M$ be an irreducible NIM-rep over $R_{p,G}$, and  let $m$ be the number of orbits of the $G$-action. Then $m$ divides $p$. 
\end{theorem}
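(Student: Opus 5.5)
By Lemma \ref{lemma:unique orbit}, each $X_k$ induces a well-defined map $\sigma_k$ on the set of orbits $\{\mathcal O_1,\dots,\mathcal O_m\}$. It sends $\mathcal O_i$ to the unique orbit $\mathcal O_j$ containing the summands of $X_k\rhd m^i_\ell$. The plan is to show that $k\mapsto \sigma_k$ defines an action of $\mathbb Z_p$ on this set of orbits that is transitive and free, and then to count.

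First I would check that each $\sigma_k$ is a bijection whose inverse is $\sigma_{p-k}$. In the proof of Lemma \ref{lemma:unique orbit} we saw that if $c^k_{i,j}\neq 0$, then the summands of $X_{p-k}\rhd m^j_s$ all lie in $\mathcal O_i$. So $\sigma_{p-k}\sigma_k=\id$, and by symmetry $\sigma_k\sigma_{p-k}=\id$. Next I would verify the homomorphism property $\sigma_i\sigma_j=\sigma_{i+j}$, with indices mod $p$ and $\sigma_0=\id$. If $i+j\neq p$, then $X_iX_j=\sqrt{|G|}X_{i+j}$. Applying $X_i$ to $X_j\rhd m$ lands in $\sigma_i(\sigma_j(\mathcal O))$, while the right-hand side lands in $\sigma_{i+j}(\mathcal O)$. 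Since all coefficients are non-negative integers and no cancellation occurs, these orbits coincide. If $i+j=p$, the identity follows from the inverse computation. Hence $\sigma_k=\sigma_1^k$, which gives a $\mathbb Z_p$-action on the orbits.

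Irreducibility makes the action transitive. The union of the orbits in the $\langle\sigma_1\rangle$-orbit of $\mathcal O_1$ is stable under $G$ and under every $X_k$, so it spans a sub-NIM-rep and must therefore be all of $M$. A transitive action of the cyclic group $\mathbb Z_p$ on $m$ points is isomorphic to the action on $\mathbb Z_p/\operatorname{Stab}$. The stabilizer is the subgroup $d\mathbb Z_p$, where $d$ is the order of $\sigma_1$, so $m=d$. Since $\sigma_1^p=\sigma_0=\id$, we get $m=d\mid p$.

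The step needing the most care is the homomorphism check. One has to keep track of the fact that $X_i\rhd(\text{sum of elements in }\mathcal O_j)$ lies entirely in $\sigma_i(\mathcal O_j)$, which is where Lemma \ref{lemma: action on orbits-JL} is used. One also has to treat $\sigma_0$ and the case $i+j=p$ consistently, using $\sum_g g\rhd m^i_\ell\in\mathcal O_i$. Beyond this, the argument is just orbit-stabilizer counting for a cyclic group.
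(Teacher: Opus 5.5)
Your proof is correct and follows essentially the paper's route: $X_1$ permutes the $G$-orbits, irreducibility forces this permutation to be a single $m$-cycle, and $\sigma_1^p=\id$ (equivalently, $X_1^p$ is a sum of invertibles) gives $m\mid p$. Your checks that $\sigma_k$ is a bijection with inverse $\sigma_{p-k}$ and that $\sigma_k=\sigma_1^k$ supply details the paper only uses tacitly (e.g.\ in Remark~\ref{rk: X1 does not fix orbits}), and your orbit--stabilizer count for $\mathbb Z_p$ replaces the paper's appeal to that no-fixed-points remark.
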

\begin{proof}
    Suppose $m>1$. By Lemma \ref{lemma:unique orbit}, we have an action of $X_1$ on the set of orbits $\{\mathcal O_1, \dots, \mathcal O_m\},$ defined as follows: $X_1 \cdot \mathcal O_i:=\mathcal O_j$, where $\mathcal O_j$ is the unique orbit containing the summands of $X_1\rhd m_\ell^i$ for any $\ell$. We can then identify this action of $X_1$ with a permutation $\sigma \in S_m$. 

    Consider the factorization $\sigma=\sigma_1\dots\sigma_\ell$ of $\sigma$ into disjoint cycles. The irreducibility condition on the NIM-rep then implies that $\ell=1$, since the orbits in the NIM-rep must be connected to each other. Hence $\sigma$ is a cycle and, by Remark \ref{rk: X1 does not fix orbits}, its length is exactly $m$. 

    Lastly, since by the fusion rules $X_1^{p}$ is a sum of invertible elements, we must have that $|\sigma|=m$ divides $p$, as desired. 
\end{proof}

\begin{remark}\label{remark:cycle} 
    If $m>1$, following the  proof of Theorem \ref{prop:p divides n of orbits} we can visualize the action of $X_1$ on the set of orbits $\{\mathcal O_1, \dots, \mathcal O_m\}$ with the following diagram
    \begin{equation}
        \begin{tikzcd}
	{\mathcal O_1} & {\mathcal O_2} & {\mathcal O_3} & \cdots & {\mathcal O_m}
	\arrow["{X_1}", curve={height=-12pt}, from=1-1, to=1-2]
	\arrow["{X_1}", curve={height=-12pt}, from=1-2, to=1-3]
	\arrow["{X_1}",  curve={height=-12pt}, from=1-3, to=1-4]
	\arrow["{X_1}", curve={height=-12pt}, from=1-4, to=1-5]
	\arrow["{X_1}", curve={height=-12pt}, from=1-5, to=1-1].
\end{tikzcd}
    \end{equation}
\end{remark}

\subsection{Classification of NIM-reps over \texorpdfstring{$R_{p,G}$}{RpG}} \label{sec:classification RpG}
In this section, we give a classification of irreducible NIM-reps over $R_{p,G}$ with $m$ orbits for any $m$ that divides $p$, as per Theorem \ref{prop:p divides n of orbits}. 

\begin{theorem}\label{prop: p-orbits RpG}
     Let $p>2$ and let $m$ be a divisor of $p$. Irreducible NIM-reps over $R_{p,G}$ with $m$ orbits are parametrized by subgroups $H_1, \dots, H_m<G$ such that $|H_i||H_{\sigma_k(i)}|/|G|$ is a square integer for all $1\leq i,k\leq p-1$, where $\sigma_k:=(1 \dots m)^k$.
\end{theorem}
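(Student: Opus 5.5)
Throughout I would use the labelling of Remark \ref{remark:cycle}: for $m>1$, the orbits are numbered so that $X_1\cdot\mathcal O_i=\mathcal O_{i+1}$, with indices mod $m$. For $m=1$ there is only one orbit and everything below holds trivially. The plan has two halves. First, show that any irreducible NIM-rep with $m$ orbits is completely determined by its stabilizer subgroups $H_1,\dots,H_m$, and that these subgroups must satisfy the square condition. Second, show that every tuple satisfying the condition is actually realized.

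\emph{Necessity.} First I would show that $X_k$ sends $\mathcal O_i$ to $\mathcal O_{i+k}$ for every $1\le k\le p-1$.
\begin{itemize}
\item For $k<p$ the fusion rules give $X_1^k=|G|^{(k-1)/2}X_k$.
\item By Lemma \ref{lemma:unique orbit}, $X_1^k$ carries $\mathcal O_i$ into $\mathcal O_{\sigma^k(i)}$ with $\sigma=(1\dots m)$. Hence so does $X_k$.
\end{itemize}
Consequently $c^k_{i,j}=0$ unless $j\equiv i+k \pmod m$, and I write $c^k_i:=c^k_{i,i+k}$. By Lemma \ref{lemma: action on orbits-JL}, $X_k\rhd m^i_\ell=c^k_i\sum_t m^{i+k}_t$ for every $\ell$.

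Next I plug this into the first equation of \eqref{eq: coeff counter}. Only the term $j=i+k$ survives, and rigidity gives $c^{p-k}_{i+k}=c^k_i$. This yields
\[
|H_i|=(c^k_i)^2\,|G:H_{i+k}|, \qquad\text{i.e.}\qquad (c^k_i)^2=\frac{|H_i||H_{\sigma_k(i)}|}{|G|}.
\]
So each such quotient is the square of a positive integer (positive by Lemma \ref{lemma:non-zero c}). This also shows the whole NIM-rep is determined by the $H_i$: the $G$-action on $\mathcal O_i\cong G/H_i$ is fixed, and every $c^k_i$ is forced to be the square root above.

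\emph{Sufficiency.} Given subgroups $H_1,\dots,H_m$ satisfying the hypothesis, I would build the NIM-rep as follows.
\begin{itemize}
\item Take $M=\bigsqcup_i G/H_i$, with $G$ acting by left multiplication.
\item Set $c^k_i:=\sqrt{|H_i||H_{i+k}|/|G|}$.
\item Let $X_k$ send every element of $G/H_i$ to $c^k_i$ times the sum of all cosets in $G/H_{i+k}$.
\end{itemize}
The relation $gX_k=X_k=X_kg$ is clear, since $X_k$ is constant on orbits and its image is $G$-invariant. For $X_iX_j$ with $i+j\ne p$, acting on $\mathcal O_a$ gives the coefficient
\[
c^j_a\,|G:H_{a+j}|\,c^i_{a+j}=\sqrt{|H_a||H_{a+i+j}|}=\sqrt{|G|}\,c^{i+j}_a,
\]
as required. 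For $i+j=p$ the same computation gives $|H_a|\sum_t m^a_t=\sum_{g}g\rhd m^a_\ell$, matching $\bigoplus_g g$. Rigidity reduces to $c^k_i=c^{p-k}_{i+k}$, which is symmetric in the defining formula. Irreducibility holds because $X_1$ cycles through all $m$ orbits, each of which is a transitive $G$-set.

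The step I expect to need the most care is the first one. I must justify that $X_k$ moves orbits by exactly $k$ even when $k$ is not coprime to $p$, including cases where $X_k$ fixes orbits. I must also check that $X_1^k$ being a positive multiple of $X_k$ is legitimate for every $k<p$; the case $i+j=p$ is where the group sum appears instead. I must also make sure the index bookkeeping for $k$ running up to $p-1$, with indices reduced mod $m$, matches the condition as stated.
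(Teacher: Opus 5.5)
Your proposal is correct and follows the paper's route. You label the orbits so that $X_1$ acts as $(1\dots m)$, read off $(c^k_{i,\sigma_k(i)})^2=|H_i||H_{\sigma_k(i)}|/|G|$ from Equation \eqref{eq: coeff counter} and rigidity, and conversely define $c^k_{i,\sigma_k(i)}$ as that square root. You also fill in two points the paper leaves implicit: that $X_k$ acts on orbits by $\sigma_1^k$ because $X_1^k=|G|^{(k-1)/2}X_k$ for $k<p$, and the explicit check of the fusion rules and rigidity, where $m\mid p$ is exactly what keeps the orbit indices consistent when $i+j$ is reduced mod $p$.
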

\begin{proof}
 Let $M$ be an irreducible NIM-rep over $R_{p,G}$ with $m$ orbits coming from the $G$-action, and let $H_1, \dots, H_m<G$ be the corresponding subgroups parametrizing the orbits. 
 Recall from Remark \ref{remark:cycle} that $X_1$ acts as a cycle $\sigma_1$ of size $m$ on the set of orbits, and so we may relabel the orbits to $\{\mathcal O_1, \dots, \mathcal O_m\}$ so that the $X_1$ action is given by $\sigma_1=(1 \dots m)$.
Then the action of $X_k$ on the set of orbits is given by  $\sigma_k=\sigma_1^k=(1 \ \dots \ m)^k.$
 Following Equation \eqref{eq: coeff counter} we obtain 
    \begin{align*}
    &|H_i|= c_{i,\sigma_k(i)}^kc_{\sigma_k(i),i}^{p-k} |G:H_{\sigma_k(i)}|=(c_{i,\sigma_k(i)}^k)^2 |G:H_{\sigma_k(i)}| ,
    \end{align*}
 and so 
    \begin{align*}
        (c_{i,\sigma_k(i)}^k)^2= |H_i||H_{\sigma_k(i)}|/|G|,
    \end{align*}
    for all $i=1, \dots, m$ and $k=1, \dots, p-1.$ It follows then that 
 $ c_{i}^k=|H_i||H_{\sigma_k(i)}|/|G|$ is a square integer for all $i$ and $k$. 

 Reciprocally, suppose we have a choice of subgroups $H_1, \dots, H_m<G$ such that $|H_i||H_{\sigma_k(i)}|/|G|$ is a square integer for all $i$ and $k$. This choice of subgroups defines a $G$-action over $M$ with $m$ orbits. To see that this lifts to a NIM-rep action over $R_{p,G}$, we need to define an action of $X_1, \dots, X_{p-1}$ on $M$ which respects the fusion rules. From Lemma \ref{lemma: action on orbits-JL}, it is enough to determine the numbers $c_{i,j}^k=(X_k \rhd m_\ell^i, m_t^j)$ for all $1\leq i,j,k\leq p-1$. For a fixed $1\leq k \leq p-1$, we set $c_{i,\sigma_k(i)}^k=\sqrt{|H_i||H_{\sigma_k(i)}|/|G|},$ and zero everywhere else. It is a quick check that this definition satisfies the fusion rules and thus gives a well-defined NIM-rep. 
\end{proof}

\begin{remark}\label{rmk:formula-c-JL} As shown in the proof above, the corresponding action is defined by setting    $$\textstyle X_k \rhd m_{\ell}^{i}= \sqrt{|H_i||H_{\sigma_k(i)}|/|G|}\sum_{j=1}^{|G:H_{\sigma_k(i)}|}  m_j^{\sigma_k(i)},$$ for all $k=1, \dots, p-1,$ $1\leq i \leq m$ and $1\leq \ell \leq |G:H_i|$.
\end{remark}

\begin{example}
    Irreducible NIM-reps over $R_{p,G}$ with a unique orbit are parametrized by subgroups $H<G$ such that $\sqrt{|G|}$ divides $|H|$. Moreover,  $$\textstyle X_k \rhd m_l^{1}= (|H|/\sqrt{|G|})\sum_{j=1}^{|G:H|}  m_j^{1},$$ for all $k=1, \dots, p-1.$ That is, $c^k_{1,1}=c^r_{1,1}$ for all $1\leq k,r \leq p-1.$
\end{example}

We denote by $M(H_1, \dots, H_m)$ the NIM-rep obtained from each possible choice of subgroups $H_1, \dots, H_m<G$, as described in Theorem \ref{prop: p-orbits RpG}.

\begin{theorem}\label{thm:classification RpG}
        Two irreducible NIM-reps $M(H_1, \dots, H_m)$ and $M(H'_1, \dots, H'_m)$ are isomorphic if and only if there is a permutation $\tau\in S_m$  such that 
         $H_i$ is conjugate to $H'_{\tau(i)}$.
\end{theorem}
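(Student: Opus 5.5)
Both directions reduce to a standard fact: two transitive $G$-sets $G/H$ and $G/H'$ are isomorphic as $G$-sets if and only if $H$ and $H'$ are conjugate in $G$. The rest of the argument tracks how a NIM-rep isomorphism interacts with the orbit decomposition \eqref{eq:number of orbits} and with the explicit action of the $X_k$ given in Remark \ref{rmk:formula-c-JL}.

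For the ``only if'' direction, let $\psi\colon M(H_1,\dots,H_m)\to M(H'_1,\dots,H'_m)$ be an equivalence. By definition $\psi$ is a bijection on basis elements that induces an $R_{p,G}$-module isomorphism. In particular $\psi$ commutes with the action of every $g\in G$, so it is an isomorphism of $G$-sets. Hence $\psi$ sends $G$-orbits bijectively onto $G$-orbits, which defines a permutation $\tau\in S_m$ with $\psi(\mathcal O_i)=\mathcal O'_{\tau(i)}$. Restricting $\psi$ to $\mathcal O_i$ gives a $G$-equivariant bijection $G/H_i\cong G/H'_{\tau(i)}$. Stabilizers of corresponding points then agree, since $\operatorname{Stab}(\psi(m))=\operatorname{Stab}(m)$, so $H_i$ is conjugate to $H'_{\tau(i)}$. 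Here I should note that the subgroup $H_i$ labelling an orbit is only well defined up to conjugacy in the first place (it is the stabilizer of some chosen point).

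For the ``if'' direction, suppose $\tau\in S_m$ is given with $H_i$ conjugate to $H'_{\tau(i)}$.
\begin{enumerate}
\item For each $i$, choose a $G$-equivariant bijection $\psi_i\colon\mathcal O_i\to\mathcal O'_{\tau(i)}$, and let $\psi$ be their union. This is a $G$-set isomorphism.
\item Check that $\psi$ intertwines the $X_k$-action. By Remark \ref{rmk:formula-c-JL}, $X_k\rhd m^i_\ell$ is a scalar multiple of the full orbit sum of $\mathcal O_{\sigma_k(i)}$, with coefficient $\sqrt{|H_i||H_{\sigma_k(i)}|/|G|}$, and this coefficient depends only on subgroup orders, which are conjugation invariant. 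The full orbit sum maps to the full orbit sum under $\psi$.
\item It therefore suffices that $\tau$ be compatible with the cycle structure: $\tau\sigma_k=\sigma'_k\tau$, where $\sigma'_k$ denotes the $X_1$-cycle on the target.
\end{enumerate}

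The main obstacle is step 3. The permutation $\tau$ in the statement is arbitrary, while the labelling in $M(H_1,\dots,H_m)$ is fixed so that $X_1$ acts by $(1\,\dots\,m)$. Two cases must be handled.
\begin{itemize}
\item In the only-if direction, $\psi$ automatically intertwines $X_1$, so $\tau$ commutes with the $m$-cycle. Such a $\tau$ is a rotation, and the conclusion holds for it.
\item In the if direction, the statement must be read with $\tau$ in the centralizer of $(1\,\dots\,m)$, i.e.\ a cyclic rotation. Equivalently, one can observe that the data determining the module are the subgroups indexed by the cyclically ordered orbits, up to rotation. For a general $\tau$ I would first try to relabel the target orbits by $\tau$ and verify directly that the resulting $X_1$-action is still an $m$-cycle with the same coefficients.
\end{itemize}
If the direct verification fails for a non-rotation $\tau$, I would state the result with $\tau$ a power of $(1\,\dots\,m)$, which is what the argument genuinely supports.
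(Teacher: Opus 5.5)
Your analysis of step 3 is correct, and it is sharper than the paper's own proof. In the ``if'' direction the paper takes the $G$-set isomorphism $\psi$ with $\psi(\mathcal O_i)=\mathcal O'_{\tau(i)}$ and checks only that the multiplicities agree, $\sqrt{|H_i||H_{\sigma_k(i)}|/|G|}=\sqrt{|H'_{\tau(i)}||H'_{\tau(\sigma_k(i))}|/|G|}$. It never checks that $\psi(X_k\rhd m^i_\ell)$, which is supported on $\mathcal O'_{\tau\sigma_k(i)}$, and $X_k\rhd\psi(m^i_\ell)$, which is supported on $\mathcal O'_{\sigma_k\tau(i)}$, lie in the same orbit. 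As you say, this requires $\tau\sigma_1=\sigma_1\tau$. The centralizer of an $m$-cycle in $S_m$ is the cyclic group it generates, so $\tau$ must be a rotation.

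The gap in your proposal is that you stop at a hedge (``I would first try \dots; if the direct verification fails \dots''). The verification does fail, so you should commit: for $m\ge 3$ the ``if'' direction of the statement is false. Take $G=\mathbb Z_2\times\mathbb Z_2$ and $p=m=3$, and let $F_1,F_2,F_3$ be the three subgroups of order $2$. Both $M(F_1,F_2,F_3)$ and $M(F_1,F_3,F_2)$ exist, since every ratio $|H_i||H_j|/|G|$ equals $1$, and they are related by $\tau=(2\,3)$.

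\begin{itemize}
\item Because $G$ is abelian, every point of the orbit labelled $F_j$ has stabilizer exactly $F_j$.
\item A $G$-equivariant bijection preserves stabilizers, so it must send $\mathcal O_1\mapsto\mathcal O'_1$, $\mathcal O_2\mapsto\mathcal O'_3$ and $\mathcal O_3\mapsto\mathcal O'_2$.
\item But $X_1$ sends $\mathcal O_1$ to $\mathcal O_2$ in the first NIM-rep and $\mathcal O'_1$ to $\mathcal O'_2$ in the second.
\end{itemize}

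So no such bijection intertwines $X_1$, and the two NIM-reps are not isomorphic. Your proposed relabeling check fails for the same reason: relabeling the target by $\tau$ turns the $X_1$-cycle into $\tau^{-1}(1\,\dots\,m)\tau$. That is a different $m$-cycle unless $\tau$ is a rotation.

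What your argument does prove, completely and in both directions, is the corrected statement with $\tau$ restricted to $\langle(1\,\dots\,m)\rangle$. Equivalently, the invariant is the cyclically ordered tuple of conjugacy classes, taken up to rotation. This coincides with the stated theorem only when $m\le 2$.
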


\begin{proof}
 If $M(H_1, \dots, H_m)$ and $M(H'_1, \dots, H'_m)$ are isomorphic as NIM-reps, then they are also isomorphic as $G$-sets, which implies that  there is a permutation $\tau\in S_m$  such that 
         $H_i$ is conjugate to $H'_{\tau(i)}$. On the other hand, suppose we have two irreducible NIM-reps  $M(H_1, \dots, H_m)$ and $M(H'_1, \dots, H'_m)$ and a permutation $\tau\in S_m$ such that   $H_i$ is conjugate to $H'_{\tau(i)}$. To show that they are isomorphic as NIM-reps it is enough to show that the induced isomorphism $\psi: M(H_1, \dots, H_m)\xrightarrow{\sim} M(H'_1, \dots, H'_m)$ of $G-$sets respects the action of $X_k$ for all $1\leq k \leq p-1.$ But this follows directly from Remark \ref{rmk:formula-c-JL} since 
$$\sqrt{|H_i||H_{\sigma_k(i)}|/|G|}=\sqrt{|H'_{\tau(i)}||H'_{\tau(\sigma_k(i))}|/|G|}$$ for all $k$ . 
\end{proof}

\begin{example}
    Irreducible NIM-reps over $R_{p,G}$ with a unique orbit are classified by conjugacy classes of subgroups $H<G$ such that $\sqrt{|G|}$ divides $|H|$.
\end{example}

We illustrate our classification results with some concrete examples.

\begin{example}\label{ex:Z2xZ2p3NIM-orbitgraph} 
 Consider the case where $G = \mathbb Z_2 \times \mathbb{Z}_2 = \langle (0,1), (1,0)\rangle$ and $p = 3$. By Theorem \ref{prop:p divides n of orbits}, an irreducible NIM-rep can have only one or three orbits coming from the $G$-action. 
 We first consider NIM-reps with a unique orbit. Since we require $|H|^2/|G|$ to be a square, we have two choices: either $|H| = 2$, or $|H| = 4$.  Choosing $|H| = 2$ gives a 2-dimensional NIM-rep where $c_{1,1}^k = 1$ for $k \leq p-1$. There are three non-isomorphic 2-dimensional NIM-reps, corresponding to the choices of $\langle (0,1)\rangle$, $\langle (1,0)\rangle$  and $\langle (1,1)\rangle$ for $H$.  Choosing $|H| = 4$ gives a 1-dimensional NIM-rep where $c_{1,1}^k = 2$  for $k \leq p-1$.  Notice that this parameterization here is done independent of $p$, and so more generally, these are the only irreducible NIM-reps with one orbit for $R_{p, G}$. 
 
We will now look for NIM-reps with three orbits under the $G$-action. These are determined by the choice of three subgroups $H_1, H_2$ and $H_3$, where $\frac{|H_i||H_j|}{|G|}$ must be a square. We have three choices: either $|H_1| = 1 $ and $ |H_2| = |H_3| = 4$, or $|H_i| = 2$, for all $i$, or  $|H_i| = 4$ for all $i$. With the first and second choice, the respective NIM-reps are 6-dimensional, and with the third choice, the NIM-rep is 3-dimensional. There are 10 non-isomorphic NIM-reps, one in the first case, eight in the second case, and one in the third case. For the first case, by Remark \ref{rmk:formula-c-JL}, we have $c^1_{1,2}=1, c^1_{2,3}=2$, and $c^1_{3,1}=1$. Then, by rigidity and Equation \eqref{eq: coeff condition}, we have   \[c^1=\left(\begin{array}{ccc}
0 & 1 & 0 \\ 
0 & 0 & 2 \\ 
1 & 0 & 0
\end{array}\right)\quad \text{and}\quad c^2=\left(\begin{array}{ccc}
0 & 0 & 1 \\ 
1 & 0 & 0 \\ 
0 & 2 & 0
\end{array}\right).\] The coefficients $c_{i,j}^k$ for the other cases can be obtained in a similar way. In all cases,  the NIM-orbit graph is, up to re-ordering of the orbits, the following: 
   \begin{equation}
       \begin{tikzcd}
\mathcal O_1 \arrow[rr, "X_1", bend left] \arrow[rd, "X_2"] &                                                             & \mathcal O_2 \arrow[ld, "X_1", bend left] \arrow[ll, "X_2"'] \\
 & \mathcal O_3 \arrow[lu, "X_1", bend left] \arrow[ru, "X_2"] &                                                  \end{tikzcd}
   \end{equation}
These are all of the NIM-reps of $R_{3, \mathbb{Z}_2 \times \mathbb{Z}_2}$ with exactly 3 orbits. Since the number of orbits must be a divisor of $p = 3$, this example is a full classification of the irreducible NIM-reps of $R_{3, \mathbb{Z}_2 \times \mathbb{Z}_2}$.
\end{example}

\begin{example}\label{ex:Z2xZ2p4NIM-orbitgraph}
      Consider the case where $G = \mathbb Z_2 \times \mathbb Z_2$ and $p = 4$. Here, NIM-reps could have 1,2, or 4 orbits under the $G$-action.  The case where we have one $G$-orbit follows exactly like Example \ref{ex:Z2xZ2p3NIM-orbitgraph}. When $H=\langle(1,0)\rangle$, its NIM-graph is given in Example \ref{ex: Jordan-Larson NIM graph}. With two orbits we have the following three choices for orbit sizes: $|H_1| = 1$  and $|H_2| = 4$, or  $|H_1| = |H_2| = 2$  or $|H_1| = |H_2| = 4$. The first choice gives a 5-dimensional NIM-rep where $c_{ij}^k$ are 0 or 1. The second choice gives a 4-dimensional NIM-rep where $c_{ij}^k$ are 0 or 1. The third choice gives a 2-dimensional NIM-rep where $c_{ij}^{k}$ are 0 or 2. 
      Finally, when we have four orbits, we again have three cases: $|H_1| = 1, |H_2| = |H_3| = |H_4| = 4$, or all $|H_i| = 2$ or all $|H_i| = 4$.  We get the NIM-orbit graphs (up to reordering of orbits) found in  Table \ref{ex:Z2xZ2p4NIM-orbitgraphstable}.  

      \begin{table}[H]
          \centering
          \begin{tabular}{c|c|c}
            $\begin{tikzcd}
\mathcal{O}_1 \arrow[loop, dashed, distance=3em, in=125, out=55] \arrow[loop, dashed, distance=3em, in=125, out=55] \arrow[loop, rightsquigarrow, distance=3em, in=-70, out=-5] \arrow[loop, distance=3em, in=-170, out=-95] \arrow[loop, distance=3em, in=-170, out=-95]
\end{tikzcd}$ & $\begin{tikzcd}
\mathcal{O}_1 \arrow[ loop, rightsquigarrow, distance=2em, in=125, out=55] \arrow[rr, bend left] \arrow[rr, dashed, bend left=49] \arrow[rr, dashed, bend left=49] &  & \mathcal{O}_2 \arrow[ll, bend left=49] \arrow[loop, rightsquigarrow, distance=2em, in=55, out=125] \arrow[ll, dashed, bend left] \arrow[ll, dashed, bend left] 
\end{tikzcd}$  & 
$\begin{tikzcd}
        \mathcal{O}_1 \arrow[rr, dashed, bend left=9] \arrow[rr, dashed, bend left=9] \arrow[dd, bend left=9] \arrow[rrdd, rightsquigarrow] &&\mathcal{O}_2 \arrow[ll, bend left=9] \arrow[dd, dashed, bend left=9] \arrow[dd, dashed, bend left=9] \arrow[lldd,rightsquigarrow] \\&& \\
        \mathcal{O}_4 \arrow[uu, dashed, bend left=9] \arrow[uu, dashed, bend left=9] \arrow[rr, bend left=9] \arrow[uurr,rightsquigarrow] && \mathcal{O}_3 \arrow[uull,rightsquigarrow] \arrow[ll, dashed, bend left=9] \arrow[ll, dashed, bend left=9]\arrow[uu, bend left=9]
    \end{tikzcd}$
\\ \hline 
               1-orbit & 2-orbits & 4-orbits 
          \end{tabular}
          \caption{$G=\mathbb{Z}_2 \times \mathbb{Z}_2$, $p=4$, NIM-orbit graphs. Dashed arrows indicate action with $X_1$, resp. squiggle arrows with $X_2$ and plain arrows with $X_3$.}
          \label{ex:Z2xZ2p4NIM-orbitgraphstable}
      \end{table}
\end{example}

\subsection{Algebra objects from NIM-reps of \texorpdfstring{$R_{p,G}$}{RpG}}
In this section, we use our prior classification of irreducible NIM-reps over $R_{p,G}$ to prove all of them are admissible, and to construct the associated  algebra object in each case.

Let $M$ be an irreducible NIM-rep over $R_{p,G}$, with corresponding subgroups $H_1, \dots, H_m <G$ for $m|p$, detailed at Theorem \ref{prop: p-orbits RpG}. Remember that \cite[Proposition 2.36]{HR} if a NIM-rep is admissible, we will detect algebra objects at elements in the NIM-graph with non-trivial self-loops. When we rearrange a NIM-graph into a NIM-orbit graph, we are just reordering and collapsing the NIM-graphs and so self-loops will also appear if any orbit contains an element of the NIM-rep that has a self-loop in the NIM-graph. Again, if the NIM-rep is admissible, algebra objects hence will also be detected at these orbits with self-loops.

\begin{lemma}
Every irreducible NIM-rep over $R_{p,G}$  is admissible. 
\end{lemma}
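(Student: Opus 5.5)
Fix an irreducible NIM-rep $M=M(H_1,\dots,H_m)$ as classified in Theorem \ref{prop: p-orbits RpG}. I will take $m_0:=m^1_1$, a basis element in the first orbit $\mathcal O_1$, and show that every basis element of $M$ is of the form $b\rhd m_0$ for a single basis element $b\in B$. Reaching elements inside $\mathcal O_1$ is immediate: $G$ acts transitively on $\mathcal O_1$, so for each $\ell$ there is $g\in G$ with $g\rhd m^1_1=m^1_\ell$.

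For the other orbits there is an obstacle. A non-invertible $X_k$ applied to $m_0$ gives $X_k\rhd m^1_1=\sqrt{|H_1||H_{\sigma_k(1)}|/|G|}\sum_j m^{\sigma_k(1)}_j$ by Remark \ref{rmk:formula-c-JL}. This is a single basis element only when $|G:H_{\sigma_k(1)}|=1$ and the coefficient equals $1$. So Definition \ref{NIMadmis}, read literally with the requirement $b_j\rhd m_0=m_i$, will generally fail once $m>1$. This is the step I expect to be hardest: deciding which notion of admissibility the argument can actually prove.

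I would therefore use the weaker reading implicit in the construction of the algebra object from \cite[Proposition 2.36]{HR}. Under that reading, $m_i$ should appear as a summand of $b_j\rhd m_0$; equivalently, $(b_j\rhd m_0,m_i)\neq 0$, or $m_0$ generates $M$ as a based $\mathbb Z_+$-module. This version does go through. By Remark \ref{remark:cycle}, $X_1$ cycles $\mathcal O_1\to\mathcal O_2\to\cdots\to\mathcal O_m$, and $\sigma_k=\sigma_1^k$ sends orbit $1$ to orbit $k+1$ for $0\le k\le m-1$. Since $m\mid p$, we have $m-1\le p-1$, so $X_k$ exists for every $k\le m-1$. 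For $k\ge 1$ the coefficient $\sqrt{|H_1||H_{k+1}|/|G|}$ is a positive integer, so every basis element $m^{k+1}_j$ of $\mathcal O_{k+1}$ occurs in $X_k\rhd m_0$ with nonzero multiplicity. Together with the $G$-transitivity on $\mathcal O_1$, this shows that every $m_i\in M$ occurs in $b\rhd m_0$ for some $b\in B$.

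In the special situation where the literal definition is intended, the same computation tells exactly when it holds. It holds precisely when each $\mathcal O_{k+1}$ with $k\ge1$ is a single point with coefficient $1$, that is, $H_{k+1}=G$ and $|H_1|=\sqrt{|G|}$. I would record this as a remark, and then proceed to read off the algebra object $\bigoplus_i a_ib_i$ from the self-loops at $m_0$: these are the elements of $H_1$, together with $X_k$ whenever $\sigma_k(1)=1$, taken with multiplicity $c^k_{1,1}$.
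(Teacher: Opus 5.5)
Your argument is essentially the paper's. Both reach every orbit from $\mathcal O_1$ through the $X_1$-cycle of Remark~\ref{remark:cycle}, using $X_1$ and its powers, i.e.\ multiples of the $X_k$, and use $G$-transitivity inside each orbit. Both also establish only the weaker ``$m_i$ appears as a summand of $b_j\rhd m_0$'' form of admissibility. You are right that the literal Definition~\ref{NIMadmis} generally fails once $m>1$, for instance for $G=\mathbb Z_2\times\mathbb Z_2$, $p=3$, all $|H_i|=2$. The paper's phrase ``connect two elements'' passes over this point.

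Your side remark contains a slip. When $H_{k+1}=G$, the coefficient is $\sqrt{|H_1||G|/|G|}=\sqrt{|H_1|}$. The literal condition therefore requires $H_1=\{e\}$, not $|H_1|=\sqrt{|G|}$. Compare Example~\ref{ex:Z2xZ2p3NIM-orbitgraph} with $|H_1|=1$ and $|H_2|=|H_3|=4$, where $c^1_{1,2}=c^2_{1,3}=1$.
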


\begin{proof}
    Take Remark \ref{remark:cycle}. We can use $X_1$ and powers thereof to connect the orbits with each other, and so we will always be able to connect two elements in different orbits.
\end{proof}

Take Example \ref{ex:Z2xZ2p3NIM-orbitgraph}. In this case, we cannot identify any self-loop, and so any algebra object will be group-like, given by $A_i=\oplus_{h \in H_i} h$ for any $1\leq i \leq 3$. However, observe Example \ref{ex:Z2xZ2p4NIM-orbitgraph} and remember Remark \ref{rk: Xk does not fix orbits}: there can be self-loops in certain orbits and these will be given by acting with some non-invertible whose label $k$ does divide $p$.

The self loops in the NIM-orbit graph can be predicted using the results developed earlier in this section. By Remark \ref{remark:action-X}, any set of orbits can be observed as a cycle by acting with $X_1$, and the action of $X_1$ can be identified with a permutation $\sigma \in S_m$. With this observation at hand, we have the following.

\begin{remark}
Recall Proposition \ref{prop:p divides n of orbits}, and take $p=m\ell$ for some $\ell$. If $X_1$ acts on the set of orbits as a permutation $\sigma=\left(1 2 3 \ldots m\right)$, then there will be self-loops via acting with $X_{jm}$ for $j=1, \dots, \ell-1$.
\end{remark}

From the previous remark and Theorem \ref{prop: p-orbits RpG}, analyzing the multiplicity of the self-loops we get the following.

\begin{proposition}
   Algebra objects associated to an irreducible NIM-rep over $R_{p,G}$ with $m$ orbits are of the form 
    $$A_i=\oplus_{h \in H_i} h \oplus_{j=1}^{\ell-1} c_{i,i}^{jm} X_{jm}=\oplus_{h \in H_i} h \oplus_{j=1}^{\ell-1} \sqrt{|H_i|^2/|G|} X_{jm},$$
    for all $1\leq i \leq m$. 
\end{proposition}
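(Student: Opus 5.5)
The plan is to read off the self-loops at a basis element $m^i_\ell$ directly from the explicit action in Remark \ref{rmk:formula-c-JL}, and then feed the loop multiplicities into the recipe of \cite[Proposition 2.36]{HR}. Admissibility, proved in the lemma above, lets us use that recipe with any $m_0=m^i_\ell$ as generator. The algebra attached to $m^i_\ell$ is then $\bigoplus_{b\in B} (b\rhd m^i_\ell, m^i_\ell)\, b$. So it suffices to compute these multiplicities separately for $b=g\in G$ and for $b=X_k$.

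For the group part, $g\rhd m^i_\ell=m^i_\ell$ exactly when $g$ lies in the stabilizer of $m^i_\ell$. Since the orbit $\mathcal O_i$ is the $G$-set $G/H_i$, this stabilizer is a conjugate of $H_i$. Choosing the representative $m^i_\ell$ to be the coset $H_i$ itself, the stabilizer is exactly $H_i$, each with multiplicity one. This gives the summand $\oplus_{h\in H_i}h$. If a different $\ell$ is chosen, one obtains a conjugate subgroup, which yields an isomorphic-type algebra; I would remark on this but not belabor it.

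For the non-invertible part, Remark \ref{rmk:formula-c-JL} gives $(X_k\rhd m^i_\ell, m^i_\ell)=c^k_{i,i}$. This is nonzero only if $\sigma_k(i)=i$, where $\sigma_k=(1\,\dots\,m)^k$. Because $\sigma_1$ is an $m$-cycle (Remark \ref{remark:cycle}), $\sigma_k$ fixes $i$ if and only if $m\mid k$. Writing $p=m\ell$, the relevant values with $1\le k\le p-1$ are $k=jm$ for $j=1,\dots,\ell-1$. For these, Remark \ref{rmk:formula-c-JL} gives $c^{jm}_{i,i}=\sqrt{|H_i||H_{\sigma_{jm}(i)}|/|G|}=\sqrt{|H_i|^2/|G|}$. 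This is an integer by the square condition of Theorem \ref{prop: p-orbits RpG} applied with $\sigma_k(i)=i$. Summing these contributions gives the stated formula.

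The main obstacle is not the computation but the justification that the object $\bigoplus_b a_b b$ is indeed the algebra detected. For this I would rely on \cite[Proposition 2.36]{HR} together with admissibility. I would also note the degenerate case $m=p$ (so $\ell=1$), where the sum over $j$ is empty and the algebra is group-like, matching Example \ref{ex:Z2xZ2p3NIM-orbitgraph}.
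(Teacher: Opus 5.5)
Your proposal is correct and follows the paper's route. The paper likewise locates the self-loops from the cycle structure of the $X_1$-action: they occur exactly at $X_{jm}$ with $p=m\ell$. It takes their multiplicities $c^{jm}_{i,i}=|H_i|/\sqrt{|G|}$ from the explicit action in Remark \ref{rmk:formula-c-JL}, and the group part $\oplus_{h\in H_i}h$ from the stabilizer, as you do. Your choice of representative whose stabilizer is exactly $H_i$ rather than a conjugate, and your note that the sum is empty when $m=p$, are clarifications the paper leaves implicit.
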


\begin{example}
    In the case where we have a unique orbit,  we obtain the following algebra object:
\[
    A= \oplus_{k=1}^{p-1} c_{1,1}^k X_k\oplus_{h \in H} h= \left((|H|/\sqrt{|G|}) \oplus_{k=1}^{p-1} X_k \right)\oplus_{h \in H} h.
\]
\end{example}

\section{Galindo-Lentner-Möller generalization of Tamabara-Yamagami fusion ring}\label{sec:GLM}

In this section, we study the fusion ring $\GLM$ as defined in \cite{GLM} by Galindo, Lentner and M\"oller, classify all NIM-reps over these rings, and detect the potential algebra objects associated to the NIM-reps. 

\begin{definition}
    \cite{GLM} Let $\Gamma$ be a finite abelian group of even order and $\delta\in\Gamma/2\Gamma$. The fusion ring $\operatorname{GLM}(\Gamma, \delta)$ has basis $\Gamma\cup \{X_{\overline{g}}\}_{\overline{g}\in \Gamma/2\Gamma}$, multiplication between group elements is the group operation,
    \begin{align*}
g\,X_{\overline{h}}=X_{\overline{g}+\overline{h}}=X_{\overline{h}}\,g, & & X_{\overline{g}}X_{\overline{h}}=\oplus_{t\in \Gamma, \overline{t}=\delta+\overline{g}+\overline{h}} t,
    \end{align*}
and the involution is given by $g^*=-g$ and $(X_{\overline{g}})^*=X_{-\overline{g}-\delta}$, for all $g,h\in \Gamma$. 
\end{definition}

A $\mathbb{Z}_2$-graded categorification for this fusion ring is constructed \cite[\S 5.2]{GLM}. In \cite{GLM0}, the authors show that if $\Gamma$ has odd order and $\mathbb{Z}_2$ acts by multiplication by -1 on $\Gamma$, then the resulting category is a Tambara-Yamagami fusion category.

\subsection{Preliminary results for irreducible NIM-reps over \texorpdfstring{$\GLM$}{GLM}}\label{sec: preliminary GLM}
In this section, we set up some notation for working with a NIM-rep over $\GLM$, and analyze the number of orbits the associated $\Gamma$- and $2\Gamma$-actions can have. 

Recall that, for a given NIM-rep $M$, if we restrict ourselves to the group component of $\operatorname{GLM}(\Gamma, \delta)$ we obtain a $\Gamma$-action on the NIM-rep basis \cite{HR}. So $M$ can be partitioned into $\Gamma$-orbits
\begin{align*}
  \textstyle  M=\bigsqcup_{j=1}^n \mathcal{M}^j,
\end{align*}
where the $j$ index is moving around the $n$ distinct $\Gamma$-orbits of the action, each of which is defined by a stabilizer subgroup $H_j<\Gamma$. 

It will be useful to us to further restrict this action to the subgroup $2\Gamma<\Gamma.$ Each $\Gamma$-orbit partitions into $2^m$ $2\Gamma$-orbits for some $m\geq 0.$ This follows since for a fixed orbit $\mathcal M^j$, the number of $2\Gamma$-orbits is $N_j=|\Gamma:2\Gamma|/|H_j: H_j\cap 2\Gamma|$. Since $\Gamma/2\Gamma$ is a group of torsion 2 and $H_j/(H_j\cap 2\Gamma)$ is a subgroup of $\Gamma/2\Gamma$, we find that $N_j$ must be a power of 2. Hence, we will partition each $\Gamma$-orbit into disjoint $2\Gamma$-orbits as follows. For each $j$, 
\begin{align*}
\textstyle \mathcal{M}^j=\bigsqcup_{i=1}^{N_j} \mathcal{O}_i^j,
\end{align*}
where $\mathcal{O}_i^j$ is a $2\Gamma$-orbit defined by $\mathcal{O}^j_i\coloneq \{m^{i,j}_\ell\}_{1\leq \ell \leq |2\Gamma:H_i^j|}$, and $H_i^j=H_j\cap 2\Gamma$. We will stick to this notation when referring to $\Gamma$ and $2\Gamma$-orbits from now on. To refer to the $i$th $2\Gamma$-orbit of the $j$th $\Gamma$-orbit, we write $(i,j)$ and call this an \emph{orbit pair}.

\begin{remark}
    An equivalent way to count the number of $2\Gamma$ orbits in each $\Gamma$-orbit $\mathcal M^i$ is $N_i=|(\Gamma/H_i)[2]|$, where for a group $G$ we define $G[2]:=\{g\in G : 2g=0\}.$
\end{remark}

\begin{lemma}\label{lemma: action on orbits-GLM} Let $M$ be an irreducible NIM-rep over $\operatorname{GLM}(\Gamma,\delta)$, and fix an orbit pair $(i, j)$. Then $X_{\overline{g}}\rhd m_{\ell_1}^{i,j}=X_{\overline{g}}\rhd m_{\ell_2}^{i,j}$ for all $1\leq \ell_1, \ell_2 \leq |2\Gamma: H_i^j|$, for all $\overline{g}\in \Gamma/2\Gamma$.
\end{lemma}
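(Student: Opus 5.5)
The plan is a direct computation in the spirit of \cite[Proposition 3.9]{HR} and Lemma \ref{lemma: action on orbits-JL}. The key observation is that elements of $2\Gamma$ are absorbed by every non-invertible basis element. By the multiplication rule in $\operatorname{GLM}(\Gamma,\delta)$ we have $h\,X_{\overline{g}}=X_{\overline{h}+\overline{g}}=X_{\overline{g}}\,h$ for $h\in\Gamma$. If $h\in 2\Gamma$, then $\overline{h}=0$ in $\Gamma/2\Gamma$, so
\[
X_{\overline{g}}\, h = X_{\overline{g}} \quad \text{for all } h\in 2\Gamma,\ \overline{g}\in\Gamma/2\Gamma .
\]

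Next, fix an orbit pair $(i,j)$ and two elements $m_{\ell_1}^{i,j}, m_{\ell_2}^{i,j}$ of the $2\Gamma$-orbit $\mathcal{O}_i^j$. Recall from \cite{HR} that the group part of the NIM-rep acts on the basis $M$ by permutations. By the definition of a $2\Gamma$-orbit there is therefore some $h\in 2\Gamma$ with $h\rhd m_{\ell_1}^{i,j}=m_{\ell_2}^{i,j}$.

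Finally, we use that the NIM-rep is a module over the fusion ring, i.e.\ $(ab)\rhd m = a\rhd(b\rhd m)$. This gives
\[
X_{\overline{g}}\rhd m_{\ell_2}^{i,j} = X_{\overline{g}}\rhd (h\rhd m_{\ell_1}^{i,j}) = (X_{\overline{g}}\,h)\rhd m_{\ell_1}^{i,j} = X_{\overline{g}}\rhd m_{\ell_1}^{i,j},
\]
which is the claim. Irreducibility is not actually needed.

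I do not expect a genuine obstacle. The only point requiring care is to use $2\Gamma$-orbits rather than full $\Gamma$-orbits. For a general $g'\in\Gamma\setminus 2\Gamma$ one has $X_{\overline{g}}\,g' = X_{\overline{g}+\overline{g'}}\neq X_{\overline{g}}$. Hence, unlike the Jordan--Larson case, the action of $X_{\overline{g}}$ is not constant on $\Gamma$-orbits, which is exactly why the statement is phrased in terms of orbit pairs.
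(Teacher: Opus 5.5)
Your proof is correct and matches the paper's argument: you choose $h\in 2\Gamma$ with $h\rhd m_{\ell_1}^{i,j}=m_{\ell_2}^{i,j}$ and use the module property together with the fusion rule $X_{\overline{g}}\,h=X_{\overline{g}+\overline{h}}=X_{\overline{g}}$. You are also right that irreducibility plays no role. Your closing remark, however, should say that the action of $X_{\overline{g}}$ \emph{need not} be constant on $\Gamma$-orbits, since in special cases, such as $H=\Gamma$, it is.
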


\begin{proof}
    As $m^{i,j}_{l_1}$ and $m^{i,j}_{l_2}$ are in the same $2\Gamma$-orbit, there is a group element $h\in 2\Gamma$ such that $h\rhd m^{i,j}_{l_1}=m^{i,j}_{l_2}$. Then using the module action and the fusion rules, we have that 
 $$
        X_{\overline{g}}\rhd m^{i,j}_{l_2}=X_{\overline{g}}\rhd (h\rhd m^{i,j}_{l_1})= (X_{\overline{g}}\,h)\rhd m^{i,j}_{l_1}= X_{\overline{g}+\overline{h}}\rhd m^{i,j}_{l_1}=X_{\overline{g}}\rhd m^{i,j}_{l_1}.$$
\end{proof}

\begin{remark} \label{remark:action-X}
    We will thus denote $c_{(i,j),(k, \ell)}^{\overline{g}}\coloneq (X_{\overline{g}} \rhd m_r^{i,j}, m_s^{k,\ell})$. By the rigidity condition, $c_{(i,j),(k,\ell)}^{\overline{g}}=c_{(k,\ell), (i,j)}^{-\overline{g}-\delta}$. With this notation, we have that
    \begin{align*}
          \textstyle      X_{\overline{g}}\rhd m_{r}^{i,j}=\sum_{\ell=1}^n\sum_{k=1}^{p_\ell}c_{(i,j),(k,\ell)}^{\overline{g}}\sum_{s=1}^{
        |2\Gamma:H_\ell\cap 2\Gamma|} m_s^{k,\ell}.
    \end{align*} So, acting on both sides by $X_{-\overline{g}-\delta}$, we obtain   
    \begin{align*}
  \textstyle      |H_j\cap 2\Gamma| \sum_{v=1}^{|2\Gamma: H_j\cap 2\Gamma|} m_v^{i,j}=\sum_{t\in 2\Gamma} t\rhd m_{r}^{i,j}=\sum_{\ell=1}^n\sum_{k=1}^{p_\ell}c^{\overline{g}}_{(i,j),(k,\ell)} \sum_{s=1}^{
        |2\Gamma:H_\ell\cap 2\Gamma|}   X_{-\overline{g}-\delta}\rhd m_{s}^{k,\ell}.
    \end{align*} Using Lemma \ref{lemma: action on orbits-GLM} and counting the multiplicities of the $(i,j)$-labeled and $(q,u)$-labeled orbits on either side, for $(q,u)\neq (i,j)$, we get the equations
\begin{align}
  |H_j\cap 2\Gamma|& \textstyle =\sum_{\ell=1}^n\sum_{k=1}^{p_\ell}(c^{\overline{g}}_{(i,j),(k,\ell)})^2|2\Gamma:H_\ell\cap 2\Gamma|, \text{ and }\label{eq:(1)} \\
    0&= \textstyle \sum_{\ell=1}^n\sum_{k=1}^{p_\ell} c_{(i,j),(k,\ell)}^{\overline{g}}c_{(k,\ell),(q,u)}^{-\overline{g}-\delta}|2\Gamma: H_\ell \cap2\Gamma|\label{eq:(2)}.
\end{align}
Since the summands on Equation \eqref{eq:(2)} are non-negative integers, this implies, for all orbit pair $(k,\ell)$, 
\begin{align}\label{eq:coeff-condition-GLM}
    c_{(i,j),(k,\ell)}^{\overline{g}}c_{(k,\ell),(q,u)}^{-\overline{g}-\delta}=0,
\end{align}
for all $(q,u)\neq (i,j)$. 
\end{remark}

The following lemma has proofs that mimic Lemma \ref{lemma:non-zero c}  and Lemma \ref{lemma:unique orbit}, so will be omitted.

\begin{lemma}\label{lemma:unique-orbit-non-invertible-GLM}
\begin{enumerate}[\upshape (a)]
    \item For $\overline{g}\in \Gamma/2\Gamma$ and an orbit pair $(i,j)$, there exists an orbit pair $(k,\ell)$ such that $c_{(i,j),(k,\ell)}^{\overline{g}}\ne 0$.
    \item   For an orbit pair $(i,j)$ and $\overline{g}\in\Gamma/2\Gamma$, the summands in $X_{\overline{g}} \rhd m_{\ell}^{i,j}$  are contained in a unique orbit pair. 
\end{enumerate}

\end{lemma}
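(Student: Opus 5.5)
Both parts follow the pattern of Lemmas \ref{lemma:non-zero c} and \ref{lemma:unique orbit}, with orbit pairs $(i,j)$ (that is, $2\Gamma$-orbits) in place of $G$-orbits, and with Remark \ref{remark:action-X} and Lemma \ref{lemma: action on orbits-GLM} in place of their $R_{p,G}$ analogues.

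For part (a), we use that $X_{\overline{g}}$ is a nonzero basis element of the fusion ring. Acting by $X_{-\overline{g}-\delta}$ on $X_{\overline{g}}\rhd m_r^{i,j}$ gives $\sum_{t\in 2\Gamma+\delta'} t\rhd m_r^{i,j}$, which is nonzero. Therefore $X_{\overline{g}}\rhd m_r^{i,j}$ is a nonzero element of the $\mZ_+$-module. Expanding it in the basis $M$, some coefficient $(X_{\overline{g}}\rhd m_r^{i,j}, m_s^{k,\ell})$ is positive. By Lemma \ref{lemma: action on orbits-GLM}, this number depends only on the orbit pairs, so it equals $c^{\overline{g}}_{(i,j),(k,\ell)}\neq 0$.

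For part (b), fix $(i,j)$ and $\overline{g}$, and use (a) to pick $(k,\ell)$ with $c^{\overline{g}}_{(i,j),(k,\ell)}\ne 0$. By rigidity, $c^{-\overline{g}-\delta}_{(k,\ell),(i,j)}=c^{\overline{g}}_{(i,j),(k,\ell)}\neq0$. Now apply \eqref{eq:coeff-condition-GLM}. For every $(q,u)\neq(i,j)$ we get $c^{\overline{g}}_{(i,j),(k,\ell)}\,c^{-\overline{g}-\delta}_{(k,\ell),(q,u)}=0$, so $c^{-\overline{g}-\delta}_{(k,\ell),(q,u)}=0$. Hence $X_{-\overline{g}-\delta}\rhd m_s^{k,\ell}$ is supported only on the orbit pair $(i,j)$.

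To go back to $X_{\overline{g}}$, suppose $c^{\overline{g}}_{(i,j),(k',\ell')}\ne0$ for a second orbit pair $(k',\ell')$. Running the same argument with $(k',\ell')$ shows that $X_{-\overline{g}-\delta}\rhd m^{k',\ell'}_s$ is also supported on $(i,j)$. The contradiction has to come from the dual relation: \eqref{eq:coeff-condition-GLM} applied with the roles of $\overline{g}$ and $-\overline{g}-\delta$ interchanged, which is allowed because the involution satisfies $-(-\overline{g}-\delta)-\delta=\overline{g}$. Taking the base orbit pair to be $(k,\ell)$, it reads $c^{-\overline{g}-\delta}_{(k,\ell),(i,j)}\,c^{\overline{g}}_{(i,j),(k',\ell')}=0$ for $(k',\ell')\ne(k,\ell)$. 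Since the first factor is nonzero, the second must vanish, which contradicts the choice of $(k',\ell')$. So all summands of $X_{\overline{g}}\rhd m^{i,j}_r$ lie in the single orbit pair $(k,\ell)$.

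The only step needing care is checking that \eqref{eq:coeff-condition-GLM} holds for every label $\overline{g}$, including with $\overline{g}$ and $-\overline{g}-\delta$ swapped. This needs the computation in Remark \ref{remark:action-X}, in particular the identity $X_{-\overline{g}-\delta}X_{\overline{g}}=\sum_{t\in 2\Gamma}t$. That identity holds because $\delta+(-\overline{g}-\delta)+\overline{g}=0$. I expect no other obstacle.
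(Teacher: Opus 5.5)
Your proof is correct and is essentially the argument the paper intends. The paper omits the proof and points to Lemmas \ref{lemma:non-zero c} and \ref{lemma:unique orbit}. Your final step, which applies \eqref{eq:coeff-condition-GLM} with $\overline{g}$ and $-\overline{g}-\delta$ interchanged, is just the coefficient-level form of the paper's step of acting with $X_{\overline{g}}$ on $X_{-\overline{g}-\delta}\rhd m_s^{k,\ell}$. It makes both your intermediate claim about the support of $X_{-\overline{g}-\delta}\rhd m_s^{k,\ell}$ and the detour through a second pair $(k',\ell')$ unnecessary. One small slip: in (a) the sum should run over $t\in 2\Gamma$, not $2\Gamma+\delta'$, as you yourself state correctly at the end.
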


\begin{corollary}\label{coro:unique-orbit-GLM}
    The summands in $X_{\overline{g}} \rhd \mathcal{M}^j$  are contained in a unique $\Gamma$-orbit.
\end{corollary}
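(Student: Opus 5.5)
Fix a $\Gamma$-orbit $\mathcal M^j$ and $\overline g\in\Gamma/2\Gamma$. We need to show that all summands of $X_{\overline g}\rhd m$, as $m$ ranges over $\mathcal M^j$, lie in one $\Gamma$-orbit. The plan is to combine Lemma \ref{lemma:unique-orbit-non-invertible-GLM}(b), which handles a single $2\Gamma$-orbit, with the fusion rule $g\,X_{\overline h}=X_{\overline g+\overline h}$, which lets us move between the $2\Gamma$-orbits of $\mathcal M^j$.

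First, consider one $2\Gamma$-orbit $\mathcal O^j_i$. By Lemma \ref{lemma:unique-orbit-non-invertible-GLM}(b), the summands of $X_{\overline g}\rhd m^{i,j}_r$ lie in a unique orbit pair $(k,\ell)$. In particular, they lie in the single $\Gamma$-orbit $\mathcal M^\ell$. By Lemma \ref{lemma: action on orbits-GLM}, this does not depend on $r$.

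Next, take another $2\Gamma$-orbit $\mathcal O^{i'}_j$ inside $\mathcal M^j$. Since $\mathcal M^j$ is one $\Gamma$-orbit, there is $h\in\Gamma$ with $h\rhd m^{i,j}_r=m^{i',j}_{r'}$. Then
\[
X_{\overline g}\rhd m^{i',j}_{r'}=(X_{\overline g}\,h)\rhd m^{i,j}_r = h\rhd\bigl(X_{\overline g}\rhd m^{i,j}_r\bigr),
\]
using associativity of the module action and the identity $X_{\overline g}h=hX_{\overline g}$, both of which follow from the fusion rules. The right-hand side is $h$ applied to a sum of basis elements of $\mathcal M^\ell$. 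Since $\mathcal M^\ell$ is $\Gamma$-stable, it is again a sum of elements of $\mathcal M^\ell$. Hence every summand of $X_{\overline g}\rhd\mathcal M^j$ lies in $\mathcal M^\ell$, which proves uniqueness of the target $\Gamma$-orbit.

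There is no real obstacle here. The one point to watch is that the action of $h$ on $X_{\overline g}\rhd m^{i,j}_r$ may permute the $2\Gamma$-orbits inside $\mathcal M^\ell$, since $h$ need not lie in $2\Gamma$. So the target orbit pair may change with $i$, but the target $\Gamma$-orbit does not. This is exactly why the corollary is stated at the level of $\Gamma$-orbits rather than orbit pairs.
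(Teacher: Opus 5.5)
Your proof is correct and is essentially the paper's argument: apply Lemma~\ref{lemma:unique-orbit-non-invertible-GLM}(b) to one $2\Gamma$-orbit, then transport along $h\in\Gamma$ with $h\rhd m^{i,j}_r=m^{i',j}_{r'}$, using $hX_{\overline g}=X_{\overline g}h$ and the $\Gamma$-stability of the target orbit. The paper likewise observes that the target orbit pair may change from $(s,k)$ to $(s',k)$ while the $\Gamma$-orbit stays fixed. Two cosmetic points: $\mathcal O^{i'}_j$ should read $\mathcal O^{j}_{i'}$, and associativity of the action is a module axiom, not a consequence of the fusion rules.
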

\begin{proof}
     We need to see that the summands in $X_{\overline{g}} \rhd m_{\ell}^{i,j}$ and the summands in $X_{\overline{g}} \rhd m_{\ell'}^{i',j}$ are contained in the same $\Gamma$-orbit. Let $h\in \Gamma$ such that $h\rhd m_{\ell}^{i,j}=m_{\ell'}^{i',j}$. From Lemma \ref{lemma:unique-orbit-non-invertible-GLM}, the summands in $X_{\overline{g}} \rhd m_{\ell}^{i,j}$ are contained in a unique orbit pair $(s,k)$. Since the action of $h$ fixes any $\Gamma$-orbit, then the summands in $h\rhd (X_{\overline{g}} \rhd m_{\ell}^{i,j})$ are contained in a unique orbit pair $(s',k)$. By the fusion rules, $h\rhd (X_{\overline{g}} \rhd m_{\ell}^{i,j})=X_{\overline{g}}h \rhd m_{\ell}^{i,j}=X_{\overline{g}} \rhd (h\rhd  m_{\ell}^{i,j})=X_{\overline{g}} \rhd m_{\ell'}^{i',j}$. Therefore, the summands in $X_{\overline{g}} \rhd m_{\ell}^{i,j}$ and the summands in $X_{\overline{g}} \rhd m_{\ell'}^{i',j}$ are contained in the same $\Gamma$-orbit $k$.
\end{proof}

\begin{theorem}\label{prop:atmost2}
  An irreducible NIM-rep over $\GLM$ has at most two $\Gamma$-orbits. 
\end{theorem}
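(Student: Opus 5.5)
By Corollary \ref{coro:unique-orbit-GLM}, each $X_{\overline{g}}$ induces a well-defined map $\phi_{\overline{g}}$ on the set of $\Gamma$-orbits $\{\mathcal M^1,\dots,\mathcal M^n\}$: $\phi_{\overline{g}}(\mathcal M^j)$ is the unique $\Gamma$-orbit containing the summands of $X_{\overline{g}}\rhd \mathcal M^j$. The plan is to show that this map does not depend on $\overline{g}$, that it is an involution, and then to use irreducibility (connectedness of the NIM-graph) to conclude $n\le 2$, in the same spirit as the proof of Theorem \ref{prop:p divides n of orbits}.

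The first step is independence of $\overline{g}$. For any $h\in\Gamma$ the fusion rules give $X_{\overline{g}}\,h=X_{\overline{g}+\overline{h}}$, hence $X_{\overline{g}+\overline{h}}\rhd m=X_{\overline{g}}\rhd(h\rhd m)$. Since $h\rhd m$ lies in the same $\Gamma$-orbit as $m$, we get $\phi_{\overline{g}+\overline{h}}=\phi_{\overline{g}}$. As every class in $\Gamma/2\Gamma$ has the form $\overline{g}+\overline{h}$, all the $X_{\overline{g}}$ induce a single map $\phi$ on $\Gamma$-orbits.

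The second step is to show that $\phi$ is an involution. Take $c^{\overline{g}}_{(i,j),(k,\ell)}\neq 0$. By rigidity, $c^{-\overline{g}-\delta}_{(k,\ell),(i,j)}\neq 0$, so $X_{-\overline{g}-\delta}$ sends $\mathcal M^\ell$ into $\mathcal M^j$. Hence $\phi(\phi(\mathcal M^j))=\mathcal M^j$, and $\phi$ is a bijection with $\phi^2=\id$. Alternatively, $X_{\overline{g}}X_{\overline{h}}$ is a sum of group elements, which fix every $\Gamma$-orbit, and this gives the same conclusion.

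Finally, the orbits reachable from $\mathcal M^1$ by group elements and by the $X_{\overline{g}}$ are exactly $\mathcal M^1$ and $\phi(\mathcal M^1)$. The span of these (at most two) $\Gamma$-orbits is closed under the whole basis of $\GLM$, so it is a sub-NIM-rep. Irreducibility then forces $M=\mathcal M^1\cup\phi(\mathcal M^1)$, and therefore $n\le 2$. The only delicate point is the independence of $\overline{g}$. Without it, the various $X_{\overline{g}}$ could generate a larger group of permutations of the orbits. The relation $X_{\overline{g}}h=X_{\overline{g}+\overline{h}}$ handles this, because $\Gamma$ surjects onto $\Gamma/2\Gamma$.
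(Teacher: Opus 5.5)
Your proof is correct and follows essentially the paper's route: the paper also uses $X_{\overline g}=gX_{\overline 0}$ to reduce all non-invertibles to a single map on $\Gamma$-orbits, and then appeals to Corollary \ref{coro:unique-orbit-GLM}. Your explicit involution step $\phi^2=\id$, obtained from rigidity, is a useful addition. The paper's claim that irreducibility makes every pair of $\Gamma$-orbits directly adjacent under some $X_{\overline g}$ tacitly relies on it, since connectedness alone only guarantees a path.
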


\begin{proof}
    Note that any two $\Gamma$-orbits  $\mathcal M^j$ and $\mathcal M^i$ must be connected by the action of $X_{\overline{0}}$. In fact, since the action is irreducible, there exists some $X_{\overline g}$ that sends $\mathcal M^j$ to $\mathcal M^i$. But, by the fusion rules, $X_{\overline g}=gX_{\overline 0}.$
    Since the action of $g$ fixes any $\Gamma$-orbit, it must be the case that $X_{\overline{0}}$ sends $\mathcal M^j$ to $\mathcal M^i$. Moreover, by Corollary \ref{coro:unique-orbit-GLM}, $X_{\overline{0}}$ acting on $\mathcal M^j$ is contained in a unique $\Gamma$-orbit, so there can only be at most two $\Gamma$-orbits. 
\end{proof}

\subsubsection{Action on the set of $2\Gamma$-orbits}\label{sec:2Gamma action}

The next series of lemmas show how the elements of the fusion ring $\operatorname{GLM}(\Gamma, \delta)$ act on the $2\Gamma$-orbits.

\begin{remark}
For any $g\in \Gamma$, we have that $2g\in 2\Gamma$ and so $g$ acts on the $2\Gamma$-orbits as a permutation $\sigma_g$ of order 1 or 2. Moreover, for any $g,h$ we have that $\sigma_g\sigma_h=\sigma_{g+h}$, and so if $\overline h=\overline g$, then $\sigma_g=\sigma_h.$
We set the notation $\sigma_{\overline{g}}:=\sigma_g$ from now on.
\end{remark}

We denote by $\tau_0$ the action on the 2$\Gamma$-orbits induced by $X_{\overline 0}$. 

\begin{lemma}\label{lemma:X-g in terms of sigmas}Let $M$ be a NIM-rep over $\operatorname{GLM}(\Gamma,\delta)$, and $\overline g\in \Gamma/2\Gamma$.  Then $X_{\overline{g}}$ acts on the set of $2\Gamma$-orbits by $\tau_0\sigma_{\overline g}$. 
\end{lemma}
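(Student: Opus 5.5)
The plan is to write $X_{\overline g}$ as a product of $X_{\overline 0}$ with a group element, and then use that both factors have well-defined actions on the set of $2\Gamma$-orbits.

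First, by Lemma \ref{lemma:unique-orbit-non-invertible-GLM}(b), for each orbit pair $(i,j)$ and each $\overline g$, the summands of $X_{\overline g}\rhd m^{i,j}_r$ lie in a unique orbit pair. By Lemma \ref{lemma: action on orbits-GLM} this orbit pair does not depend on $r$. So $X_{\overline g}$ induces a well-defined map on the set of $2\Gamma$-orbits, and for $\overline g=\overline 0$ this map is $\tau_0$. Likewise, each $g\in\Gamma$ maps $2\Gamma$-orbits to $2\Gamma$-orbits, because $\Gamma$ is abelian and so $g$ commutes with the $2\Gamma$-action. This gives the permutation $\sigma_g=\sigma_{\overline g}$ from the preceding remark.

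Next, pick a representative $g\in\Gamma$ of $\overline g$. The fusion rules give $X_{\overline g}=X_{\overline 0}\,g=g\,X_{\overline 0}$. Hence for any basis element $m^{i,j}_r$,
$$X_{\overline g}\rhd m^{i,j}_r=X_{\overline 0}\rhd (g\rhd m^{i,j}_r).$$
The element $g\rhd m^{i,j}_r$ is a single basis element lying in the orbit $\sigma_{\overline g}(i,j)$. Applying $X_{\overline 0}$ to it yields summands lying only in the orbit $\tau_0(\sigma_{\overline g}(i,j))$. Therefore $X_{\overline g}$ acts on orbits as $\tau_0\sigma_{\overline g}$, composing right to left.

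We could equally write $X_{\overline g}=g\,X_{\overline 0}$, which gives $\sigma_{\overline g}\tau_0$. Since both are the same element of the ring, this also shows $\tau_0$ commutes with each $\sigma_{\overline g}$. That is consistent with the statement and gives no contradiction.

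The only delicate point is making sure the orbit-level maps are honestly well defined, that is, independent of the chosen basis element and of the representative $g$. The lemmas just cited settle the first. The second follows because $\sigma_g$ depends only on $\overline g$.
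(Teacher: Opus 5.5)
Your proposal is correct and follows the paper's own argument, which consists of invoking the fusion rule $X_{\overline g}=X_{\overline 0}\,g$ and composing the induced orbit maps to get $\tau_0\sigma_{\overline g}$. Your extra check that the orbit-level maps are well defined, via Lemmas \ref{lemma: action on orbits-GLM} and \ref{lemma:unique-orbit-non-invertible-GLM}, makes explicit what the paper leaves implicit in its definition of $\tau_0$.
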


\begin{proof}
   This follows by the fusion rules, since $X_{\overline g}=X_{\overline 0}g$, for any $g\in \Gamma$.
\end{proof}

\begin{proposition} \label{prop:tau0}
Let $M$ be a NIM-rep over $\operatorname{GLM}(\Gamma,\delta)$. Following the notation in this section, $\tau_0$ satisfies
  \begin{enumerate}[leftmargin=*,label=\rm{(\roman*)}] 
   \item\label{item:prop-i} $\tau_0^2=\sigma_{\delta}$, and
      \item\label{item:prop-ii} $\tau_0\sigma_{g}=\sigma_g\tau_0$, for all $g\in \Gamma$. 
  \end{enumerate} Moreover, given a $\Gamma$-action on $M$ then any choice of $\tau_0$ satisfying \ref{item:prop-i} and \ref{item:prop-ii} induces a well-defined  action of $\GLM$ on the (corresponding) set of $2\Gamma$-orbits. 
\end{proposition}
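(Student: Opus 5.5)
Both properties of $\tau_0$ will come from reading fusion rules of $\GLM$ on the set $\mathcal{S}$ of $2\Gamma$-orbits. First we need that $\tau_0$ is a genuine map on $\mathcal{S}$. By Lemma \ref{lemma:unique-orbit-non-invertible-GLM}(b), the summands of $X_{\overline 0}\rhd m^{i,j}_r$ lie in a single orbit pair, and by Lemma \ref{lemma: action on orbits-GLM} that pair does not depend on $r$. So $\tau_0:\mathcal{S}\to\mathcal{S}$ is well defined. Likewise each $g\in\Gamma$ permutes $\mathcal{S}$ by $\sigma_g$, and Lemma \ref{lemma:X-g in terms of sigmas} gives that $X_{\overline g}$ acts by $\tau_0\sigma_{\overline g}$.

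For \ref{item:prop-ii}, use the relation $gX_{\overline 0}=X_{\overline g}=X_{\overline 0}g$ in the ring. Apply both sides to a basis element $m$ lying in a $2\Gamma$-orbit $\mathcal O$. The left side is $g\rhd(X_{\overline 0}\rhd m)$, whose summands lie in $\sigma_g\tau_0(\mathcal O)$. The right side is $X_{\overline 0}\rhd(g\rhd m)$, whose summands lie in $\tau_0\sigma_g(\mathcal O)$. Since $X_{\overline 0}\rhd m\neq 0$, the two orbits must coincide, which gives $\tau_0\sigma_g=\sigma_g\tau_0$.

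For \ref{item:prop-i}, compute $X_{\overline 0}X_{\overline 0}=\bigoplus_{\overline t=\delta}t$. Acting on $m\in\mathcal O$, the left side has all summands in $\tau_0^2(\mathcal O)$. The right side is $\sum_{\overline t=\delta}t\rhd m$. Every $t$ with $\overline t=\delta$ acts on $\mathcal S$ by the same permutation $\sigma_\delta$, by the Remark that $\sigma_g$ depends only on $\overline g$. Hence every summand lies in $\sigma_\delta(\mathcal O)$, and so $\tau_0^2=\sigma_\delta$.

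The converse is the step needing most care, though it should be formal. Given a $\Gamma$-action on $M$ (hence the permutations $\sigma_{\overline g}$) and $\tau_0$ satisfying \ref{item:prop-i} and \ref{item:prop-ii}, define $X_{\overline g}$ to act on $\mathcal S$ by $\tau_0\sigma_{\overline g}$. This is well defined because $\sigma_g$ depends only on $\overline g$. We must check every fusion rule at the level of orbits; the multiplicities $c$ are not part of this claim.
\begin{itemize}
\item $gX_{\overline h}$ and $X_{\overline h}g$ both act by $\tau_0\sigma_{\overline g+\overline h}$. This uses the commutation \ref{item:prop-ii} together with $\sigma_g\sigma_h=\sigma_{g+h}$.
\item $X_{\overline g}X_{\overline h}$ acts by $\tau_0\sigma_{\overline g}\tau_0\sigma_{\overline h}=\tau_0^2\sigma_{\overline g+\overline h}=\sigma_{\delta+\overline g+\overline h}$. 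This matches the orbit $\sigma_{\delta+\overline g+\overline h}(\mathcal O)$ containing all summands of $\bigoplus_{\overline t=\delta+\overline g+\overline h}t$.
\item Compatibility with the involution: $X_{-\overline g-\delta}$ acts by $\tau_0\sigma_{\overline g+\delta}$, using $\sigma_{-\overline g}=\sigma_{\overline g}$ since $\sigma$ has order at most $2$. This equals $\tau_0^{-1}\sigma_{\overline g}^{-1}$, because $\tau_0^{-1}=\tau_0\sigma_\delta$ (from $\tau_0^2=\sigma_\delta$ and $\sigma_\delta^2=\id$). So $X_{\overline g^*}$ acts by the inverse of $(\tau_0\sigma_{\overline g})^{-1}$'s inverse, i.e.\ the two actions are mutually inverse, as rigidity requires.
\end{itemize}
One subtlety is that $\tau_0$ must be a bijection; this follows from \ref{item:prop-i}, since $\sigma_\delta$ is invertible.
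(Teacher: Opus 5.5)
Your proposal is correct and follows the paper's route. You read (i) and (ii) off the fusion rules $X_{\overline 0}X_{\overline 0}=\oplus_{\overline t=\delta}t$ and $gX_{\overline 0}=X_{\overline g}=X_{\overline 0}g$, which the paper leaves implicit and you spell out, and for the converse you run the same computations $\sigma_g\tau_0\sigma_{\overline h}=\tau_0\sigma_{\overline{g+h}}$ and $\tau_0\sigma_{\overline g}\tau_0\sigma_{\overline h}=\sigma_{\delta+\overline g+\overline h}$. Your extra involution check is harmless but redundant, since $X_{\overline g}^{*}X_{\overline g}=\oplus_{t\in 2\Gamma}t$ already forces the two actions to be mutually inverse once the product rule is verified.
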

\begin{proof}
Items \ref{item:prop-i} and \ref{item:prop-ii} follow from the fusion rules in $\operatorname{GLM}(\Gamma, \delta).$ Conversely, we start with a $\Gamma$-action on $\mathcal M$ and 
    look at the induced action on the set of $2\Gamma$-orbits, i.e., by Lemma \ref{lemma:X-g in terms of sigmas}, we define the action of $X_{\overline g}$ on this set by $\tau_0\sigma_{\overline g}$, for all $g\in \Gamma$. We need to check that the action defined in this way satisfies all the fusion rules. To check that $gX_{\overline h}=X_{\overline g+\overline h}$ and $X_{\overline g+\overline h} = X_{\overline h}g$ are satisfied, we compute
    \begin{align*}
     &   \sigma_g \tau_0\sigma_{\overline h}=\tau_0\sigma_g\sigma_h=\tau_0\sigma_{\overline{g+h}}&\text{and} &&\tau_0\sigma_{\overline{g+h}}=\tau_0\sigma_{h+g}=\tau_0\sigma_{h}\sigma_g,
    \end{align*}
    respectively. For $X_{\overline{g}}X_{\overline{h}}=\sum_{t\in \Gamma, \overline{t}=\delta+\overline{g}+\overline{h}} t$, we need to check that $\tau_0\sigma_{\overline g}\tau_0\sigma_{\overline h}$ acts like $\sigma_{\delta +\overline g+\overline h},$ and so we compute
    \begin{align*}
        \tau_0\sigma_{\overline g}\tau_0\sigma_{\overline h}=\tau_0^2\sigma_{\overline g+\overline h}=\sigma_{\delta}\sigma_{\overline g+\overline h}=\sigma_{\delta+\overline g+\overline h}.
    \end{align*}  
\end{proof}

\begin{corollary}\label{cor: order of actions}Let $M$ be a NIM-rep over $\operatorname{GLM}(\Gamma,\delta)$.
     \begin{enumerate}[\upshape (a)]
        \item\label{item:i}  If $\delta=\overline{0}$, then $X_{\overline{h}}$ acts on the $2\Gamma$-orbits as a permutation of order 1 or 2.
        \item\label{item:ii}  If $\delta\neq\overline{0}$, then $X_{\overline{h}}$ acts on the $2\Gamma$-orbits  as a permutation of order 1, 2 or 4.
    \end{enumerate} 
\end{corollary}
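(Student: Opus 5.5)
By Lemma \ref{lemma:X-g in terms of sigmas}, $X_{\overline h}$ acts on the set of $2\Gamma$-orbits by the permutation $\tau_0\sigma_{\overline h}$. So the plan is to compute the square of this permutation using Proposition \ref{prop:tau0}, and then read off the order.

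First we compute the square. Item \ref{item:prop-ii} of Proposition \ref{prop:tau0} says that $\tau_0$ commutes with every $\sigma_g$, so
\[
(\tau_0\sigma_{\overline h})^2=\tau_0^2\sigma_{\overline h}^2 .
\]
By item \ref{item:prop-i}, $\tau_0^2=\sigma_\delta$. By the remark preceding Lemma \ref{lemma:X-g in terms of sigmas}, $\sigma_{\overline h}^2=\sigma_{\overline{2h}}=\sigma_{\overline 0}=\id$, since $2h\in 2\Gamma$ acts trivially on the $2\Gamma$-orbits. Hence
\[
(\tau_0\sigma_{\overline h})^2=\sigma_\delta .
\]

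For part \ref{item:i}, suppose $\delta=\overline 0$. Then $\sigma_\delta=\id$, so $(\tau_0\sigma_{\overline h})^2=\id$. The permutation $\tau_0\sigma_{\overline h}$ therefore has order $1$ or $2$.

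For part \ref{item:ii}, suppose $\delta\neq\overline 0$. The permutation $\sigma_\delta$ still satisfies $\sigma_\delta^2=\sigma_{2\delta}=\id$. Therefore $(\tau_0\sigma_{\overline h})^4=\id$, and the order of $\tau_0\sigma_{\overline h}$ divides $4$, i.e.\ it is $1$, $2$ or $4$.

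There is no real obstacle here. The only step needing care is to justify $\sigma_{\overline h}^2=\id$ and $\sigma_\delta^2=\id$ through the homomorphism property $\sigma_g\sigma_h=\sigma_{g+h}$, together with the fact that elements of $2\Gamma$ fix every $2\Gamma$-orbit.
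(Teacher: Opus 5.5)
Your proof is correct and follows the paper's approach. The paper's proof is a single sentence citing Lemma~\ref{lemma:X-g in terms of sigmas}, Proposition~\ref{prop:tau0} and $\sigma_g^2=\id$; you have written out the computation it leaves implicit, namely $(\tau_0\sigma_{\overline h})^2=\tau_0^2\sigma_{\overline h}^2=\sigma_\delta$ together with $\sigma_\delta^2=\id$.
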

\begin{proof}
   Since $\sigma_{g}^2=\text{id}$ for all $g\in \Gamma$, this follows directly from Proposition \ref{prop:tau0} and Lemma \ref{lemma:X-g in terms of sigmas}.
\end{proof}

\subsection{NIM-reps over \texorpdfstring{$\GLM$}{GLM} with a unique \texorpdfstring{$\Gamma$}{T}-orbit}

In this section, we provide a parametrization of irreducible NIM-reps over $\operatorname{GLM}(\Gamma, $ $\delta)$ with a unique $\Gamma$-orbit. We also classify these irreducible NIM-reps by showing when they are isomorphic.

Let $\mathcal M$ be the unique $\Gamma$-orbit, defined by a stabilizer subgroup $H<\Gamma$. Recall that, restricting to the $2\Gamma$-action, $\mathcal M$ partitions into $N:=2^m$ 2$\Gamma$-orbits $\mathcal O_1, \dots, \mathcal O_N$, for some $m\geq 0$. We follow the same notation as in Section \ref{sec:2Gamma action}.

\begin{theorem}\label{thm: 1 Gamma orbit}
   Irreducible NIM-reps over GLM$(\Gamma,\delta)$ with a unique $\Gamma$-orbit are parametrized by
   \begin{enumerate}[leftmargin=*,label=\rm{(\roman*)}] 
       \item a choice of subgroup $H<\Gamma$ such that $\sqrt{|2\Gamma|}$ divides $|H\cap 2\Gamma|$, and
       \item a choice of $\tau_0$ satisfying Proposition \ref{prop:tau0}.
   \end{enumerate}
\end{theorem}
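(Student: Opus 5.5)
Both directions use the orbit-pair bookkeeping from Remark \ref{remark:action-X}, specialised to a single $\Gamma$-orbit $\mathcal M$ with stabilizer $H$. Every $2\Gamma$-orbit $\mathcal O_i$ then has stabilizer $H\cap 2\Gamma$ and size $|2\Gamma:H\cap 2\Gamma|$.

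\emph{Necessity.} Fix an irreducible NIM-rep with one $\Gamma$-orbit, and fix $\overline g\in\Gamma/2\Gamma$. By Lemma \ref{lemma:unique-orbit-non-invertible-GLM}(b), $X_{\overline g}\rhd m^{i}_r$ is supported in a single $2\Gamma$-orbit. By Lemma \ref{lemma:X-g in terms of sigmas}, that orbit is $\tau_0\sigma_{\overline g}(i)$. So Equation \eqref{eq:(1)} has exactly one nonzero summand and reads
\[
|H\cap 2\Gamma| = \bigl(c^{\overline g}_{i,\tau_0\sigma_{\overline g}(i)}\bigr)^2\,|2\Gamma:H\cap 2\Gamma|.
\]
Hence $(c^{\overline g}_{i,\tau_0\sigma_{\overline g}(i)})^2=|H\cap 2\Gamma|^2/|2\Gamma|$. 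This square is an integer, so $|2\Gamma|$ divides $|H\cap 2\Gamma|^2$ and the coefficient equals $|H\cap2\Gamma|/\sqrt{|2\Gamma|}$. To get exactly the stated divisibility, I will argue that this coefficient is a positive integer $c$ with $c\sqrt{|2\Gamma|}=|H\cap 2\Gamma|$. That forces $\sqrt{|2\Gamma|}$ to be rational, hence an integer, and then it divides $|H\cap 2\Gamma|$. The map $\tau_0$ on the $2\Gamma$-orbits satisfies (i) and (ii) of Proposition \ref{prop:tau0}. The data $(H,\tau_0)$ are thus extracted from $M$; $\tau_0$ is only determined up to relabelling of the $2\Gamma$-orbits, and I will record that for the classification step.

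\emph{Sufficiency.} Start from $H$ and $\tau_0$. Take $M=\Gamma/H$ with the regular $\Gamma$-action and split it into the $N=|(\Gamma/H)[2]|$ orbits of $2\Gamma$. The permutations $\sigma_{\overline g}$ are induced by this action, and $\tau_0$ is a permutation of the $2\Gamma$-orbits satisfying Proposition \ref{prop:tau0}. Set $c:=|H\cap2\Gamma|/\sqrt{|2\Gamma|}\in\mathbb Z_{>0}$ and define
\[
X_{\overline g}\rhd m^{i}_r := c\sum_{s} m_s^{\tau_0\sigma_{\overline g}(i)}.
\]
The checks, in order:
\begin{enumerate}[label=(\alph*)]
\item The relations $gX_{\overline h}=X_{\overline g+\overline h}=X_{\overline h}g$ hold. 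Acting by $g$ permutes elements inside the target orbit (the sum over a whole orbit is $\Gamma$-stable) and moves the source orbit by $\sigma_g$. This matches the computation in the proof of Proposition \ref{prop:tau0}.
\item The relation $X_{\overline g}X_{\overline h}=\sum_{\overline t=\delta+\overline g+\overline h}t$ holds. The left side sends $m^i_r$ to $c^2|2\Gamma:H\cap2\Gamma|$ times the full sum over the orbit $\tau_0\sigma_{\overline g}\tau_0\sigma_{\overline h}(i)=\sigma_{\delta+\overline g+\overline h}(i)$. The right side is a coset $t_0+2\Gamma$ acting on $m^i_r$; it gives $|H\cap 2\Gamma|$ times the full sum over the orbit of $t_0\rhd m^i_r$, which is the orbit $\sigma_{\delta+\overline g+\overline h}(i)$. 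The coefficients agree because $c^2|2\Gamma:H\cap 2\Gamma|=|H\cap2\Gamma|$.
\item Rigidity requires $c^{\overline g}_{i,j}=c^{-\overline g-\delta}_{j,i}$. All nonzero coefficients equal $c$, so it suffices that $j=\tau_0\sigma_{\overline g}(i)$ if and only if $i=\tau_0\sigma_{-\overline g-\delta}(j)$. This holds because $\tau_0\sigma_{\overline g+\delta}\tau_0\sigma_{\overline g}=\tau_0^2\sigma_\delta=\mathrm{id}$, using (i), (ii) and $\sigma^2=\mathrm{id}$.
\item The NIM-rep is irreducible because there is only one $\Gamma$-orbit, and $\Gamma$ is contained in the ring.
\end{enumerate}

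The main obstacle I expect is the arithmetic in the necessity direction: showing $\sqrt{|2\Gamma|}$ is an integer dividing $|H\cap2\Gamma|$, rather than only knowing that $|H\cap 2\Gamma|^2/|2\Gamma|$ is a square. The rationality argument above should settle it. A secondary point is that in checks (b) and (c) the sum over a coset of $2\Gamma$ and the induced $\sigma$ must be compatible with the fixed labelling of the $2\Gamma$-orbits; I will handle this by fixing base points and using that $\Gamma$ is abelian.
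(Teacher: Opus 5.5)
Your proposal is correct and follows the paper's proof. Necessity comes from Equation \eqref{eq:(1)} collapsing to the single summand on the orbit $\tau_0\sigma_{\overline g}(i)$, and sufficiency comes from setting every nonzero coefficient equal to $|H\cap 2\Gamma|/\sqrt{|2\Gamma|}$ on that orbit; your checks (a)--(d) and the rationality argument giving $\sqrt{|2\Gamma|}\in\mathbb Z$ fill in details the paper leaves implicit. One wording fix in check (a): $g$ does not preserve the target $2\Gamma$-orbit but carries the full orbit sum over $\mathcal O_j$ to the full orbit sum over $\mathcal O_{\sigma_g(j)}$, so $g X_{\overline h}=X_{\overline g+\overline h}$ rests on $\sigma_g\tau_0=\tau_0\sigma_g$, exactly as in the computation you cite from Proposition \ref{prop:tau0}.
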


\begin{proof}
Let $M$ be an irreducible NIM-rep with a unique $\Gamma$-orbit. Then we have a $\Gamma$-action on $M$. 
Take 
$H<\Gamma$ to be the subgroup that describes the (unique) orbit of this action, and let $\tau_0$ be induced from the corresponding action of $X_{\overline 0}$ on the set of $2\Gamma$-orbits. Then $\tau_0$ must satisfy Proposition \ref{prop:tau0}.  Let $m^{i,1}_r\in \mathcal O_i^1\subseteq \mathcal M^1.$ Then the summands of $X_{\overline g}\rhd m^{i,1}_r$ are contained in $\mathcal O_{\tau_0\sigma_g(i)}^1$. By Lemma \ref{lemma:unique-orbit-non-invertible-GLM}, we can rewrite Equation \eqref{eq:(1)} as 
    \begin{align*}
        |H\cap 2\Gamma|=(c_{(i,1),({\tau_0\sigma_g(i),1)}}^{\overline g})^2|2\Gamma:H \cap 2\Gamma|.
    \end{align*}
    Therefore, $c_{(i,1),({\tau_0\sigma_g(i)},1)}^{\overline g}=|H\cap 2\Gamma|/\sqrt{{|2\Gamma|}}$, which gives that $|H\cap 2\Gamma|/\sqrt{|2\Gamma|}$ is an integer.

Conversely, we need to check that any choice of $H<\Gamma$ such that $\sqrt{|2\Gamma|}$ divides $|H\cap 2\Gamma|$ and $\tau_0$ satisfying \ref{item:prop-i} and \ref{item:prop-ii} gives a well-defined irreducible NIM-rep.
A choice of $H<\Gamma$ defines the action of the invertibles in $\Gamma \subseteq \operatorname{GLM}(\Gamma, \delta).$ It remains to define the action on the non-invertible elements. 
By Proposition \ref{prop:tau0}, a choice of permutation $\tau_0$  satisfying \ref{item:prop-i} and \ref{item:prop-ii} gives a well defined action on the set of $2\Gamma$-orbits. Then, by Lemmas \ref{lemma: action on orbits-GLM}  and \ref{lemma:unique-orbit-non-invertible-GLM}, it remains to define the numbers $c_{(i,1),(k,1)}^{\overline g}$ for all $g\in \Gamma.$ Same as before, we must set $c_{i,{\tau_0\sigma_g(i)}}^{\overline g}:=|H\cap 2\Gamma|/\sqrt{{|2\Gamma|}}$, and the result follows. 
\end{proof}

 \begin{remark}
     For a NIM-rep with a unique $\Gamma$-orbit to exist, it is a necessary condition that $|2\Gamma|$ is a square.
 \end{remark}
 \begin{remark}
 It follows from the proof above that a NIM-rep over $\GLM$ with a unique $\Gamma$-orbit satisfies
 \begin{align*}
 &\textstyle	X_{\overline g}\rhd m_{r}^{i,1}=|H\cap 2\Gamma|/\sqrt{|2\Gamma|} \sum_{j=1}^N m_j^{\sigma_{g}\tau_0(i), 1}
 \end{align*}
    for all $\overline g\in \Gamma/2\Gamma$ and all  $i.$
 \end{remark}

					\begin{theorem}\label{thm: class unique Gamma orbit}
						Let $M=M( H, \tau_0)$ and $M'=M(H', \tau_0')$ be two irreducible NIM-reps  over GLM$(\Gamma,\delta)$ with a unique $\Gamma$-orbit. There exists a NIM-rep isomorphism $M\xrightarrow{\sim}M'$ if and only if $H=H'$ and $\tau_0=\tau_0'$.
					\end{theorem}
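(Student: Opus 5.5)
The proof splits into the two directions, and it mirrors the argument for Theorem \ref{thm:classification RpG}. The main point is that a NIM-rep with a unique $\Gamma$-orbit is completely determined by its $\Gamma$-set together with the permutation $\tau_0$ of the $2\Gamma$-orbits. The reason is that, by the Remark following Theorem \ref{thm: 1 Gamma orbit}, the action of every $X_{\overline g}$ is
\[
X_{\overline g}\rhd m_r^{i,1}=\frac{|H\cap 2\Gamma|}{\sqrt{|2\Gamma|}}\sum_{j} m_j^{\sigma_g\tau_0(i),1},
\]
and this depends only on $H$ and on $\tau_0$.

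For the forward direction, let $\psi\colon M\to M'$ be a NIM-rep isomorphism. Restricting $\psi$ to the group part gives an isomorphism of transitive $\Gamma$-sets. Since $\Gamma$ is abelian, the stabilizer of a point in a transitive $\Gamma$-set is independent of the point, and it is preserved by $\Gamma$-equivariant bijections. Hence $H=\operatorname{Stab}(m)=\operatorname{Stab}(\psi(m))=H'$.

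Next, $\psi$ is $2\Gamma$-equivariant, so it induces a bijection $\bar\psi$ between the sets of $2\Gamma$-orbits. This bijection intertwines the $\sigma_g$'s. It also intertwines $\tau_0$ with $\tau_0'$, because $\psi$ commutes with the action of $X_{\overline 0}$ and $\tau_0$ is read off from $X_{\overline 0}$ via Lemma \ref{lemma:unique-orbit-non-invertible-GLM}. The statement "$\tau_0=\tau_0'$" must be interpreted with respect to a fixed labeling of the $2\Gamma$-orbits. I would fix that labeling canonically: once $H=H'$, the $2\Gamma$-orbits of $\Gamma/H$ are identified with the cosets of $\Gamma/(H+2\Gamma)$. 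Under this identification, any $\Gamma$-equivariant bijection of $\Gamma/H$ is translation by some $a\in\Gamma$. Translation by $a$ acts on the $2\Gamma$-orbits as $\sigma_a$, which commutes with $\tau_0$ by Proposition \ref{prop:tau0}(ii). So $\tau_0'=\sigma_a\tau_0\sigma_a^{-1}=\tau_0$.

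For the converse, assume $H=H'$ and $\tau_0=\tau_0'$. Take $\psi$ to be the identity of $\Gamma/H$, which is compatible with the orbit labeling. It commutes with the $\Gamma$-action trivially. It commutes with each $X_{\overline g}$ because both actions are given by the displayed formula with the same coefficient and the same target orbit $\sigma_g\tau_0(i)$.

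The main obstacle is making precise in what sense $\tau_0$ is an invariant, since it is a permutation of a set that a priori depends on labels. I would handle this with the translation argument above: all $\Gamma$-automorphisms of $\Gamma/H$ are translations, and translations commute with $\tau_0$. This shows that $\tau_0$ is well defined independent of the choice of identification.
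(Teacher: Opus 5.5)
Your proof is correct and has the same skeleton as the paper's. Restricting to $\Gamma$ gives $H=H'$, and compatibility with $X_{\overline 0}$ on the $2\Gamma$-orbits gives $\tau_0=\tau_0'$. For the converse you check the $\Gamma$-set isomorphism against the explicit formula for each $X_{\overline g}$; the paper checks only $X_{\overline 0}$, using $X_{\overline g}=gX_{\overline 0}$.

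The genuine difference is how you make sense of $\tau_0=\tau_0'$. The paper handles this through Remark~\ref{rk:reordering}: it relabels the $2\Gamma$-orbits of $M'$ along some $\Gamma$-equivariant bijection and compares. It does not address whether the outcome depends on which bijection is chosen. You instead label the orbits canonically by $\Gamma/(H+2\Gamma)$ and note that every $\Gamma$-automorphism of $\Gamma/H$ is a translation inducing some $\sigma_a$. Proposition~\ref{prop:tau0}(ii) then gives $\sigma_a\tau_0\sigma_a^{-1}=\tau_0$.

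This supplies exactly the missing justification that $\tau_0$ is a well-defined invariant. It also shows that ``up to relabelling'' must mean relabelling by $\Gamma$-equivariant maps only. For example, the NIM-reps with $\tau_0=(12)(34)$ and $\tau_0=(13)(24)$ in Example~\ref{ex:Z2xZ2 1-orbit} are conjugate in $S_4$ (by a non-equivariant permutation), yet they are not isomorphic. The paper's route is shorter; yours actually pins down the parameter.
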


\begin{remark}\label{rk:reordering}
The definition of $\tau_0$ depends on a choice of ordering of the
$2\Gamma$-orbits of $M$, so the statement above should be interpreted
up to relabelling of these orbits.
More precisely, suppose $M(H,\tau_0)$ and $M(H',\tau_0')$ are two
NIM-reps which are isomorphic as $\Gamma$-sets. Then there exists a
$\Gamma$-equivariant bijection $\phi : M \to M'$. The map $\phi$
induces a bijection between the sets of $2\Gamma$-orbits of $M$ and
$M'$. After reordering the orbits of $M'$ if necessary, we may assume
that $\phi(\mathcal O_i)=\mathcal O'_i$ for all $i$, where
$\{\mathcal O_1,\dots,\mathcal O_n\}$ and
$\{\mathcal O'_1,\dots,\mathcal O'_n\}$ denote the respective sets of
$2\Gamma$-orbits.
With this identification of orbits, the corresponding parameters
satisfy $\tau_0=\tau_0'$.
\end{remark}
                    
					\begin{proof}
						Suppose there is an isomorphism $\psi: M\xrightarrow{\sim} M'$ as NIM reps. By restricting to the action of $\Gamma$, we get an induced isomorphism $M\simeq M'$ as $\Gamma$-sets, and so $H=H'.$ Now, since $\psi$ is an isomorphism as NIM-reps, it must respect the action of $X_{\overline{0}}$ in the set of $2\Gamma$-orbits, and so $\tau_0=\tau_0'$ follows. 
						
						Reciprocally, suppose that $H=H'$ and $\tau_0=\tau_0'$. Then we have  an isomorphism  $\psi: M\to M'$ of $\Gamma$-sets. To show that $\psi$ is an isomorphism of NIM-reps, it is enough to show that $X_{\overline{0}}\rhd \psi(m)=\psi(X_{\overline{0}}\rhd m)$, for all $m\in M$. Now, for $m\in \mathcal O_i,$ $X_{\overline{0}}\rhd \psi(m)$ is determined by $\tau_0'(i)$ and $\psi(X_{\overline{0}}\rhd m)$ is determined by $\tau_0(i)$. Since $\tau_0'=\tau_0$, then   $X_{\overline{0}}\rhd \psi(m)=\psi(X_{\overline{0}}\rhd m)$.
					\end{proof}

 \begin{corollary}\label{prop:1-orbit-GLM} Irreducible NIM-reps over $\operatorname{GLM}(\Gamma, \delta)$ with a unique $2\Gamma$-orbit are classified by subgroups $H<\Gamma$ such that
 $|H\cap 2\Gamma|/\sqrt{|2\Gamma|}$ is an integer and $|2\Gamma| /|H\cap 2\Gamma|= |\Gamma|/|H|$. Moreover, any two such NIM-reps obtained from different subgroups are non-isomorphic.
\end{corollary}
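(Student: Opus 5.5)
The plan is to specialize Theorem \ref{thm: 1 Gamma orbit} and Theorem \ref{thm: class unique Gamma orbit} to the situation where the whole NIM-rep is a single $2\Gamma$-orbit. A unique $2\Gamma$-orbit forces a unique $\Gamma$-orbit, since each $\Gamma$-orbit is a disjoint union of $2\Gamma$-orbits. So we are in the setting of Theorem \ref{thm: 1 Gamma orbit}, with stabilizer $H<\Gamma$ and with the additional requirement that $N=1$.

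First we translate $N=1$ into the stated index condition. By the count preceding Lemma \ref{lemma: action on orbits-GLM}, the number of $2\Gamma$-orbits inside the $\Gamma$-orbit is
\[
N=|\Gamma:2\Gamma|/|H:H\cap 2\Gamma|.
\]
Hence $N=1$ if and only if $|\Gamma|/|2\Gamma|=|H|/|H\cap 2\Gamma|$. This is equivalent to $|2\Gamma|/|H\cap 2\Gamma|=|\Gamma|/|H|$. Conceptually, it says the $2\Gamma$-orbit of a basis element already has full size $|\Gamma:H|$, i.e.\ $H+2\Gamma=\Gamma$.

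Second, we show that the choice of $\tau_0$ in Theorem \ref{thm: 1 Gamma orbit}(ii) becomes trivial. With a single $2\Gamma$-orbit, the set of $2\Gamma$-orbits is a point. Thus every $\sigma_g$ is the identity and $\tau_0$ must be the identity, and conditions \ref{item:prop-i} and \ref{item:prop-ii} of Proposition \ref{prop:tau0} hold automatically. The integrality condition of Theorem \ref{thm: 1 Gamma orbit}(i) is exactly $|H\cap 2\Gamma|/\sqrt{|2\Gamma|}\in\mathbb Z$. Conversely, given $H$ satisfying both conditions, Theorem \ref{thm: 1 Gamma orbit} with $\tau_0=\id$ produces an irreducible NIM-rep with one $\Gamma$-orbit. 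By the index computation above this orbit is a single $2\Gamma$-orbit. Explicitly, each $X_{\overline g}$ acts by $|H\cap 2\Gamma|/\sqrt{|2\Gamma|}$ times the sum of all basis elements.

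Finally, for the non-isomorphism claim we apply Theorem \ref{thm: class unique Gamma orbit}. Since $\tau_0=\tau_0'=\id$ is forced, there is no reordering ambiguity of the kind in Remark \ref{rk:reordering}. So $M(H,\id)\cong M(H',\id)$ if and only if $H=H'$. Note that $\Gamma$ is abelian, so isomorphic transitive $\Gamma$-sets have equal stabilizers, not merely conjugate ones. I expect no real obstacle. The only point needing care is the equivalence between $N=1$ and the index equation, which is a one-line computation with the orbit-count formula.
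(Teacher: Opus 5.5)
Your proposal is correct and follows essentially the same route as the paper: reduce to Theorem~\ref{thm: 1 Gamma orbit} with $\tau_0=\id$ forced, translate transitivity of the $2\Gamma$-action into the index equation $|2\Gamma|/|H\cap 2\Gamma|=|\Gamma|/|H|$, and get non-isomorphism from $\Gamma$ being abelian, so that isomorphic transitive $\Gamma$-sets have equal stabilizers. Your explicit orbit count $N=|\Gamma:2\Gamma|/|H:H\cap 2\Gamma|$ and the reformulation $H+2\Gamma=\Gamma$ spell out the step the paper states in a single line.
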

\begin{proof}  
At the level of group actions, having a unique $2\Gamma$-orbit means that the action of $2\Gamma$ is transitive on the orbit defined by $H$. This is the case if and only if $|2\Gamma| /|H\cap 2\Gamma|= |\Gamma|/|H|$. The parametrization then follows from Theorem \ref{thm: 1 Gamma orbit}, since we must take $\tau_0=\text{id}.$

Lastly, that any two such NIM-reps coming from different subgroups are non-isomorphic follows since $\Gamma$ is abelian, and so the group actions are non-isomorphic. 
\end{proof}

	\begin{example}\label{ex:Z2xZ2 1-orbit}
						Consider the case where $\Gamma=\mathbb{Z}_2\times \mathbb{Z}_2=\{(a,b):a,b\in \mathbb{Z}_2\}$. 
						Following Theorems \ref{thm: 1 Gamma orbit} and \ref{thm: class unique Gamma orbit}, we study the possibilities for NIM-reps over $\GLM$ with a unique $\Gamma$-orbit. When choosing $H=\Gamma$, we get a unique one-dimensional NIM-rep over $\GLM$. Proper subgroups of $\Gamma$ are  $F_1=\langle(1,0)\rangle$, $F_2=\langle (0,1)\rangle$, and $F_3=\langle (1,1)\rangle$; these could yield 2-dimensional NIM-reps with two $2\Gamma$-orbits, such that $X_{\overline{0}}$ acts on the $2\Gamma$-orbits as a permutation of order 1 or 2. To satisfy Property \ref{item:prop-i} of Proposition \ref{prop:tau0}, we can only choose $F_i$ such that $\delta=\overline{g}$ for $g\in F_i$. Thus when $\delta=\overline{(0,0)}$, any choice of $F_i$ works, and there are two possible choices of $\tau_0$ satisfying Proposition \ref{thm: 1 Gamma orbit} per order 2 subgroup: $(1 2)$ and $e$. 
                         Thus, when $\delta=\overline{(0,0)}$, there are six non-isomorphic NIM-reps coming from subgroups of order 2. 
						 For $\delta=\overline{g}$, with $g\ne \overline{(0,0)}$, we can only choose $F_i=\langle g\rangle $, and as before we get two non-isomorphic NIM-reps of dimension 2, coming from the same choices of $\tau_0$.
						
						Choosing $H=\{e\}$ could yield 4-dimensional NIM-reps with four $2\Gamma$-orbits. We will explore only the case when $\delta=\overline{(0,0)}$. Since the stabilizer subgroup is trivial, for all $g\ne (0,0)$, $\sigma_g$ is a permutation of order 2 that permutes all the elements of $M$. Without loss of generality, take $\sigma_{(1,0)}=(12)(34)$ 
                        and $\sigma_{(0,1)}=(13)(24)$. 
                        Possible choices of $\tau_0$  that satisfy Proposition \ref{prop:tau0} are $e, (1 2)(34), (13)(24),$ or $(14)(23)$; so by Theorem \ref{thm: class unique Gamma orbit} we have in total four non-isomorphic  irreducible NIM-reps over $\operatorname{GLM}(\mathbb Z_2\times \mathbb Z_2, \overline{(0,0)})$ of dimension four. A similar computation can be done for other choices of $\delta$.
			
					\end{example}

\subsection{NIM-reps over \texorpdfstring{$\GLM$}{GLM} with two \texorpdfstring{$\Gamma$}{T}-orbits} In this section, we provide a parametrization of irreducible NIM-reps over $\operatorname{GLM}(\Gamma, \delta)$ with two $\Gamma$-orbits, and classify them by showing when they are isomorphic. 

Let $\mathcal M^1$, $\mathcal M^2$ be the two $\Gamma$-orbits, defined by the stabilizer subgroups $H_1, H_2<\Gamma$, respectively. Restricting to the $2\Gamma$-action, $\mathcal M^i$ partitions into $N_i:=2^{m_i}$ $2\Gamma$-orbits, for some $m_i\geq 0$, $i\in\{1,2\}$.
Let $I=\{1, \dots, N_1\}$ and $J=\{1, \dots, N_2\}$ be the indices of the $2\Gamma$-orbits contained in $\mathcal M^1$ and $\mathcal M^2$, respectively.

\begin{lemma}\label{rk: switch of gamma orbits}
   For $M$ to be irreducible, $\tau_0\in S_{I\sqcup J}$ must satisfy for all $i\in I,$ $\tau_0(i)=j$ for some $j\in J$. In particular, this implies $N_1=N_2.$
\end{lemma}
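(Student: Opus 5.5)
The plan is to combine the argument in the proof of Theorem \ref{prop:atmost2} with Corollary \ref{coro:unique-orbit-GLM}. First, we show that $X_{\overline 0}$ must send $\mathcal M^1$ into $\mathcal M^2$. By Corollary \ref{coro:unique-orbit-GLM}, the summands of $X_{\overline 0}\rhd \mathcal M^1$ lie in a single $\Gamma$-orbit, either $\mathcal M^1$ or $\mathcal M^2$. Suppose it is $\mathcal M^1$. Since $X_{\overline g}=X_{\overline 0}\,g$ and every $g\in\Gamma$ preserves each $\Gamma$-orbit, every $X_{\overline g}$ would then map $\mathcal M^1$ into $\mathcal M^1$. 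The group elements also preserve $\mathcal M^1$, so the span of $\mathcal M^1$ would be a proper sub-NIM-rep, contradicting irreducibility.

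Hence the summands of $X_{\overline 0}\rhd m$ lie in $\mathcal M^2$ for every $m\in\mathcal M^1$. Since $\tau_0$ records the $2\Gamma$-orbit containing these summands (well defined by Lemma \ref{lemma:unique-orbit-non-invertible-GLM}(b)), this gives $\tau_0(i)\in J$ for all $i\in I$. The symmetric argument, or rigidity $c^{\overline 0}_{(i,1),(k,2)}=c^{-\delta}_{(k,2),(i,1)}$ together with $X_{-\delta}=X_{\overline 0}\,d$ for a representative $d$ of $-\delta$, shows that $\tau_0(J)\subseteq I$.

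For $N_1=N_2$, we use that $\tau_0$ is a permutation of $I\sqcup J$. This follows from Proposition \ref{prop:tau0}\ref{item:prop-i}, since $\tau_0^2=\sigma_\delta$ is a bijection. Alternatively, Lemma \ref{lemma:unique-orbit-non-invertible-GLM} together with \eqref{eq:coeff-condition-GLM} shows that two distinct orbit pairs cannot be sent by $X_{\overline 0}$ to the same orbit pair. Indeed, if $c^{\overline 0}_{(i,1),(k,2)}\neq0$, then $X_{-\delta}$ sends $(k,2)$ back only to $(i,1)$. So $\tau_0$ restricts to an injection $I\to J$ and an injection $J\to I$, and therefore $|I|=|J|$.

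The main obstacle is justifying that $\tau_0$ is genuinely a bijection of the set of $2\Gamma$-orbits. The injectivity via \eqref{eq:coeff-condition-GLM} is the cleanest route, and I would spell it out: if two orbit pairs $(i,1)\neq(i',1)$ both had $X_{\overline 0}$-summands in $(k,2)$, then $c^{\overline 0}_{(i,1),(k,2)}\,c^{-\delta}_{(k,2),(i',1)}\neq0$, which contradicts \eqref{eq:coeff-condition-GLM}.
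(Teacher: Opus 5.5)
Your proposal is correct and follows essentially the paper's route. The paper shows that $\tau_0(i)\in I$ for a single $i\in I$ forces $\tau_0(I)\subseteq I$, using transitivity of $\Gamma$ on $\mathcal M^1$ together with $\tau_0\sigma_g=\sigma_g\tau_0$ (Proposition~\ref{prop:tau0}); this is the same commutation of $g$ with $X_{\overline 0}$ that underlies Corollary~\ref{coro:unique-orbit-GLM}, and the paper then invokes irreducibility exactly as you do. Your explicit argument that $\tau_0$ is a bijection interchanging $I$ and $J$ (via $\tau_0^2=\sigma_\delta$ or \eqref{eq:coeff-condition-GLM}) usefully fills in the step the paper leaves implicit in ``in particular, $N_1=N_2$''.
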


\begin{proof}
    Suppose $\tau_0(i)=j$ for some $i,j\in I,$ and let $k\in I$. Since the action of $\Gamma$ in $\mathcal M^1$ is transitive, there exists $g\in \Gamma$  such that $\sigma_g(i)=k$. Then $\tau_0(k)=\tau_0(\sigma_g(i))=\sigma_g(\tau_0(i))=\sigma_g(j)$ which is again in $I$. This contradicts irreducibility, as the action of $X_{\overline{0}}$ must connect the $\Gamma$-orbits $\mathcal M^1$ and $\mathcal M^2$. 
\end{proof}

\begin{remark}\label{rk:no fixed points}
    It follows from the Lemma \ref{rk: switch of gamma orbits} that $X_{\overline 0}$ (and hence $X_{\overline g}$) cannot fix any $2\Gamma$- or $\Gamma$-orbit.
\end{remark}

Following the notation from Section \ref{sec:2Gamma action}, we have that $\sigma_{g}$ is a permutation in $S_{I}\times S_{J} < S_{I\sqcup J}$.

\begin{lemma}\label{prop:not-fix-orbits-GLM} 
    Suppose an irreducible NIM-rep $M$ over  $\GLM$ has two $\Gamma$-orbits, determined by the subgroups $H_1,H_2<\Gamma$, respectively. Then either
    \begin{enumerate}[leftmargin=*,label=\rm{(\roman*)}] 
        \item $\delta \in \overline{H_1}\cap \overline{H_2}$, or
        \item $\delta \not\in \overline{H_1}\cup \overline{H_2}$.
    \end{enumerate}
\end{lemma}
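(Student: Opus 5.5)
The plan is to read off from $\tau_0$ whether $\delta$ lies in $\overline{H_1}$ and in $\overline{H_2}$, and to show these two memberships agree. Here $\overline{H_j}$ is the image of $H_j$ in $\Gamma/2\Gamma$.

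First, $\sigma_{\overline{g}}$ restricted to the $2\Gamma$-orbits of $\mathcal M^j$ is trivial exactly when $\overline{g}\in \overline{H_j}$. Indeed, $g$ fixes one $2\Gamma$-orbit of $\mathcal M^j$ if and only if $g\in H_j+2\Gamma$, that is, $\overline g\in\overline{H_j}$. Since $\Gamma$ is abelian and acts transitively on $\mathcal M^j$, $g$ then fixes every $2\Gamma$-orbit of $\mathcal M^j$. Conversely, if $\overline g\notin \overline{H_j}$, then $\sigma_{g}$ restricted to $\mathcal M^j$ has no fixed points. Recording this as a dichotomy: $\sigma_\delta|_{I}$ is either the identity or fixed-point free, according to whether $\delta\in\overline{H_1}$. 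The same holds for $J$ and $\overline{H_2}$.

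Next, compare $\sigma_\delta$ on $I$ and on $J$ using $\tau_0$. By Lemma \ref{rk: switch of gamma orbits}, $\tau_0$ maps $I$ into $J$. Since $\tau_0$ is a bijection and $N_1=N_2$, it restricts to a bijection $I\to J$, and hence also maps $J$ onto $I$. By Proposition \ref{prop:tau0}\ref{item:prop-ii}, $\tau_0$ commutes with $\sigma_\delta$. Therefore $\sigma_\delta|_J=\tau_0\,\sigma_\delta|_I\,\tau_0^{-1}$, so $\sigma_\delta|_I$ and $\sigma_\delta|_J$ are conjugate. In particular, one is the identity if and only if the other is. Combined with the first step, this gives $\delta\in\overline{H_1}$ if and only if $\delta\in\overline{H_2}$, which is exactly the dichotomy (i)/(ii) in the statement.

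A cross-check comes from \ref{item:prop-i}: $\tau_0^2=\sigma_\delta$. Take $i\in I$ and set $j=\tau_0(i)\in J$. Then $\sigma_\delta(i)=\tau_0(j)$ and $\sigma_\delta(j)=\tau_0^2(j)=\tau_0(\sigma_\delta(i))$. So $\sigma_\delta$ fixes $j$ exactly when it fixes $i$, which gives the same conclusion directly.

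The only real care needed is in the first step: that a fixed point on one $2\Gamma$-orbit forces $\sigma_g$ to be trivial on the whole $\Gamma$-orbit. This uses commutativity of $\Gamma$, since stabilizers are constant along the $\Gamma$-orbit, and I expect it to be straightforward.
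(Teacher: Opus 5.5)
Your proof is correct and follows essentially the same route as the paper. Both arguments use that $\tau_0$ commutes with $\sigma_\delta$ and maps the $2\Gamma$-orbits of $\mathcal M^1$ onto those of $\mathcal M^2$, so that fixed points of $\sigma_\delta$ transfer between $I$ and $J$; both then use the stabilizer computation ($g\rhd m=t\rhd m$ with $t\in 2\Gamma$ forces $g\in H_j+2\Gamma$) to turn this into $\delta\in\overline{H_j}$. The difference is only in presentation: the paper argues by contradiction in one direction and leaves the other to symmetry, while your conjugacy $\sigma_\delta|_J=\tau_0\,\sigma_\delta|_I\,\tau_0^{-1}$ gives both directions at once.
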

	\begin{proof}

					Assume for contradiction that $\delta\in \overline{H_1}$ and $ \delta\not\in \overline{H_2}$. 
				Then $\delta=\overline{g}$ for some $g\in H_1+2\Gamma$, and thus for any \(2\Gamma\)-orbit \(\mathcal O_i^1\) contained in the first \(\Gamma\)-orbit \(\mathcal M^1\) we have
					$\sigma_\delta(i)=i$. Now, by Lemma \ref{rk: switch of gamma orbits} for any $j\in J$ there exists $i_{j}$ in $I$ such that $\tau_0(i_{j})=j.$ Then 
					\begin{align*}
						j=   \tau_0\sigma_{\delta}(i_{j})=\sigma_{\delta}\tau_0(i_{j})=\sigma_{\delta}(j),
					\end{align*}
					and so $\sigma_{\delta}$ fixes all $j\in J$; in words, \(\sigma_\delta\) fixes every \(2\Gamma\)-orbit contained in \(\mathcal M^2\).
					
					Now fix any such \(2\Gamma\)-orbit \(\mathcal O_j^2\subset\mathcal M^2\) and pick a representative element \(m\in\mathcal O_j^2\). The fact that \(\sigma_\delta\) fixes the index \(j\) means \(g\cdot \mathcal O_j^2=\mathcal O_j^2\); equivalently
					\[g\rhd m \in \mathcal O_j^2 = 2\Gamma\cdot m.
					\]
					Hence there exists \(t\in 2\Gamma\) with \(g\rhd m = t\rhd m\). Rearranging,
					$(g-t)\rhd m = m,$
					so \(g-t\) lies in the stabilizer of \(m\). But the stabilizer of any element of \(\mathcal M^2\) is exactly \(H_2\). Therefore \(g-t\in H_2\), i.e.
				$g \in H_2 + 2\Gamma,$
					 contradicting the assumption \(\delta\notin\overline{H_2}\). Hence \(\delta\) either lies in both \(\overline{H_1}\) and \(\overline{H_2}\), or in neither, as claimed.
				\end{proof}

	In view of the previous lemma, either $\delta$ lies in the image of both stabilizers in $\Gamma/2\Gamma$, or in neither. Since $\delta\in\overline{H_i}$ is equivalent to $\sigma_\delta$ fixing all $2\Gamma$-orbits inside the $i$-th $\Gamma$-orbit, it follows that $\sigma_\delta$ either fixes all $2\Gamma$-orbits or fixes none of them.

\begin{corollary}\label{prop: action of tau0} Let $M$ be an irreducible NIM-rep over $\operatorname{GLM}(\Gamma, \delta)$ with two $\Gamma$-orbits. 
    \begin{enumerate}[\upshape (a)]
        \item\label{item:i-coro} If $\delta\in \overline{H_1}\cap \overline{H_2}$, then $\tau_{ 0}$ is a permutation of order 2 with no fixed points. 
        \item\label{item:ii-coro} If $\delta\not\in \overline{H_1}\cup \overline{H_2}$, then $\tau_{ 0}$ is a product of disjoint cycles of order 4 with no fixed points.
    \end{enumerate}
\end{corollary}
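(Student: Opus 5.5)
The proof only uses the permutation identities from Proposition \ref{prop:tau0} together with Lemma \ref{rk: switch of gamma orbits} and the discussion following Lemma \ref{prop:not-fix-orbits-GLM}. By Proposition \ref{prop:tau0}\ref{item:prop-i}, $\tau_0^2=\sigma_\delta$, and $\sigma_\delta$ is an involution because $2\delta$ lies in $2\Gamma$. Hence $\tau_0^4=\sigma_\delta^2=\id$. So every cycle of $\tau_0$ has length $1$, $2$ or $4$, and the task is to decide which lengths actually occur.

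\textbf{No fixed points.} By Lemma \ref{rk: switch of gamma orbits}, $\tau_0$ sends each index of $I$ into $J$. Since $\tau_0$ is a bijection of the finite set $I\sqcup J$ and $N_1=N_2$, it restricts to a bijection $I\to J$, and therefore also sends $J$ onto $I$. So $\tau_0$ interchanges the two $\Gamma$-orbits and fixes no index, as in Remark \ref{rk:no fixed points}. It follows that every cycle has even length, i.e.\ length $2$ or $4$.

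\textbf{Case (a).} Suppose $\delta\in\overline{H_1}\cap\overline{H_2}$. By the remark following Lemma \ref{prop:not-fix-orbits-GLM}, $\sigma_\delta$ fixes every $2\Gamma$-orbit, in both $\mathcal M^1$ and $\mathcal M^2$. Thus $\sigma_\delta=\id$, so $\tau_0^2=\id$. Combined with the absence of fixed points, $\tau_0$ is a product of disjoint transpositions, each pairing an index of $I$ with one of $J$. This is an order-$2$ permutation with no fixed points.

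\textbf{Case (b).} Suppose $\delta\notin\overline{H_1}\cup\overline{H_2}$. Then $\sigma_\delta$ fixes no $2\Gamma$-orbit; the argument of Lemma \ref{prop:not-fix-orbits-GLM} shows that $\sigma_\delta(i)=i$ would force $\delta\in\overline{H}$ for the corresponding stabilizer. Consequently $\tau_0^2(x)=\sigma_\delta(x)\neq x$ for every index $x$, which rules out $2$-cycles. Since all cycles have length dividing $4$, have no $1$-cycles and no $2$-cycles, every cycle is a $4$-cycle. Each such cycle has the form $i\mapsto j\mapsto i'\mapsto j'\mapsto i$, alternating between $I$ and $J$.

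\textbf{Main point to justify.} The only step needing care is the claim that $\sigma_\delta$ fixes a $2\Gamma$-orbit inside $\mathcal M^k$ exactly when $\delta\in\overline{H_k}$. I would prove it directly. If $\delta=\overline{g}$ and $g\rhd m\in 2\Gamma\cdot m$, then $g-t\in H_k$ for some $t\in 2\Gamma$, so $\overline g\in\overline{H_k}$. Conversely, if $g\in H_k+2\Gamma$, then $g$ maps each $2\Gamma$-orbit to itself. This uses that $\Gamma$ is abelian, so every point of $\mathcal M^k$ has the same stabilizer $H_k$. Together with $\tau_0^4=\id$, this gives the stated cycle structures.
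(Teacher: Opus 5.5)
Your proof is correct and follows essentially the same route as the paper. Both arguments get the absence of fixed points from Lemma~\ref{rk: switch of gamma orbits}, and both use $\tau_0^2=\sigma_\delta$ with $\sigma_\delta^2=\id$ to restrict cycle lengths to $\{1,2,4\}$, then conclude from $\sigma_\delta=\id$ in case (a) and $\sigma_\delta$ fixed-point-free in case (b). Your direct check that $\sigma_\delta$ fixes a $2\Gamma$-orbit of $\mathcal M^k$ exactly when $\delta\in\overline{H_k}$ just makes explicit the observation the paper draws from the proof of Lemma~\ref{prop:not-fix-orbits-GLM}.
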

	\begin{proof}
			By Remark \ref{rk:no fixed points}, $\tau_0$ has no fixed points. By Proposition \ref{prop:tau0} we have $\tau_0^2=\sigma_{\delta}$.
			
			(a) If $\delta\in\overline{H_1}\cap\overline{H_2}$ then, as observed above, $\sigma_{\delta}$ fixes every $2\Gamma$-orbit; hence $\sigma_{\delta}=\mathrm{id}$. Thus $\tau_0^2=\mathrm{id}$, so every cycle of $\tau_0$ has length $1$ or $2$. Since $\tau_0$ has no fixed points, all cycles must have length $2$.
			
			(b) If $\delta\notin\overline{H_1}\cup\overline{H_2}$ then as observed above $\sigma_{\delta}$ fixes no $2\Gamma$-orbit. Recall also that each $\sigma_g$ satisfies $\sigma_g^2=\mathrm{id}$, so $\sigma_{\delta}$ has order \(1\) or \(2\). Combined with $\tau_0^2=\sigma_{\delta}$, this forces every cycle length \(k\) of \(\tau_0\) to satisfy \(k/\gcd(k,2)\in\{1,2\}\); hence \(k\in\{1,2,4\}\). The absence of fixed points rules out \(k=1\). If \(\tau_0\) had a 2-cycle then \(\tau_0^2\) would fix the two points in that cycle, contradicting that \(\sigma_{\delta}=\tau_0^2\) has no fixed points. Therefore \(k=4\) for every cycle, i.e. \(\tau_0\) is a product of disjoint 4-cycles.
		\end{proof}

To illustrate the result above, we include below a diagram of the action of $\GLM$ on the set of $2\Gamma$-orbits, showing how $X_{\overline{0}}$ arranges the said orbits in pairs when $\delta\in \overline {H_1} \cap \overline{H_2}$, and in sets of four when $\delta\not \in \overline {H_1} \cup \overline{H_2}$. 
\begin{table}[h]
          \centering
          \begin{tabular}{c|c}
  $\begin{tikzcd}
	&& {\mathcal O^2_{\sigma_g\tau_0(1)}} \\
	\\
	{\mathcal O_1^1 } && {\mathcal O_{\tau_0(1)}^2}
	\arrow[dashed, curve={height=-6pt}, from=1-3, to=3-1]
	\arrow[dashed, curve={height=-12pt}, from=3-1, to=1-3]
	\arrow[rightsquigarrow, curve={height=-6pt}, from=3-1, to=3-3]
	\arrow[color={rgb,255:red,39;green,190;blue,125}, from=3-3, to=1-3]
	\arrow[rightsquigarrow, curve={height=-6pt}, from=3-3, to=3-1]
\end{tikzcd}$     
& 
$\begin{tikzcd}
	&& {\mathcal O^2_{\sigma_g\tau_0(1)}} \\
	\\
	{\mathcal O_1^1 } && {\mathcal O_{\tau_0(1)}^2} \\
	{\mathcal O_{\tau_0^2(1)}^1} && {\mathcal O_{\tau_0^3(1)}^2} \\
	\\
	&& {\mathcal O_{\sigma_g\tau_0^3(1)}^2}
	\arrow[dashed, curve={height=-12pt}, from=1-3, to=4-1]
	\arrow[ dashed, curve={height=-12pt}, from=3-1, to=1-3]
	\arrow[rightsquigarrow, curve={height=-6pt}, from=3-1, to=3-3]
	\arrow[color={rgb,255:red,39;green,190;blue,125}, from=3-3, to=1-3]
	\arrow[rightsquigarrow, from=3-3, to=4-1]
	\arrow[rightsquigarrow, curve={height=6pt}, from=4-1, to=4-3]
	\arrow[dashed, curve={height=12pt}, from=4-1, to=6-3]
	\arrow[rightsquigarrow, from=4-3, to=3-1]
	\arrow[color={rgb,255:red,39;green,190;blue,125}, from=4-3, to=6-3]
	\arrow[dashed, curve={height=12pt}, from=6-3, to=3-1]
\end{tikzcd}$ 
\\ \hline 
               $\delta\in \overline {H_1} \cap \overline{H_2}$ & $\delta\not \in \overline {H_1} \cup \overline{H_2}$
          \end{tabular}
          \caption{Action on the set of $2\Gamma$-orbits, where dashed arrows denote action with $X_{\overline{g}}$, squiggle arrows action with $X_{\overline{0}}$ and green action with the group element $g$.}
          \label{tau_0}
      \end{table}

\begin{theorem}\label{thm:irrepNIMs-2Gammaorb}
  Irreducible NIM-reps over $\operatorname{GLM}(\Gamma,\delta)$ with two $\Gamma$-orbits are parametrized by 
   \begin{enumerate}[leftmargin=*,label=\rm{(\roman*)}] 
       \item\label{item-i-thm:irrepNIMs-2Gammaorb} A pair of subgroups $H_1, H_2<\Gamma$ satisfying:
       \begin{enumerate}[\upshape (a)]

           \item\label{item-i-a-thm:irrepNIMs-2Gammaorb} $\delta \in \overline{H_1}\cap \overline{H_2}$ or $\delta \notin \overline{H_1}\cup \overline{H_2}$,
           \item\label{item-i-b-thm:irrepNIMs-2Gammaorb} $|(\Gamma/H_1)[2]|=|(\Gamma/H_2)[2]|$,
           \item\label{item-iii-thm:irrepNIMs-2Gammaorb} $\sqrt{\frac{|H_1\cap 2\Gamma|\;|H_2\cap 2\Gamma|}{|2\Gamma|}} $ is an integer. 
       \end{enumerate}
       \item\label{item-ii-thm:irrepNIMs-2Gammaorb} A permutation $\tau_0$ of the $2\Gamma$-orbits satisfying
		Proposition~\ref{prop:tau0} and Lemma~\ref{rk: switch of gamma orbits}.
   \end{enumerate}
\end{theorem}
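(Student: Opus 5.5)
The proof runs in two directions, modelled on Theorem \ref{thm: 1 Gamma orbit}.

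\emph{Necessity.} Let $M$ be an irreducible NIM-rep with two $\Gamma$-orbits $\mathcal M^1,\mathcal M^2$, with stabilizers $H_1,H_2$, and let $\tau_0$ be the permutation of the $2\Gamma$-orbits induced by $X_{\overline 0}$. Conditions \ref{item-ii-thm:irrepNIMs-2Gammaorb} and (a) follow directly from earlier results. Proposition~\ref{prop:tau0} and Lemma~\ref{rk: switch of gamma orbits} give the constraints on $\tau_0$. Lemma~\ref{prop:not-fix-orbits-GLM} gives condition (a). Condition (b) follows from Lemma~\ref{rk: switch of gamma orbits}, which forces $N_1=N_2$, together with the remark that $N_i=|(\Gamma/H_i)[2]|$.

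Condition (c) comes from counting. By Lemma~\ref{lemma:unique-orbit-non-invertible-GLM}(b), for an orbit pair $(i,1)$ the element $X_{\overline 0}$ sends $\mathcal O_i^1$ into the single orbit pair $(\tau_0(i),2)$. Equation~\eqref{eq:(1)} then collapses to
\[
|H_1\cap 2\Gamma| = \bigl(c^{\overline 0}_{(i,1),(\tau_0(i),2)}\bigr)^2\,|2\Gamma : H_2\cap 2\Gamma| .
\]
Hence $(c^{\overline 0})^2 = |H_1\cap 2\Gamma|\,|H_2\cap 2\Gamma|/|2\Gamma|$, so this ratio is a perfect square, which is (c). The same computation for any $X_{\overline g}$ gives the same value, because $\tau_0\sigma_{\overline g}$ also swaps the two $\Gamma$-orbits. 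The computation starting from $(j,2)$ is symmetric, so the coefficient $c$ depends only on $H_1,H_2$.

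\emph{Sufficiency.} Take $H_1,H_2$ satisfying (a)--(c) and $\tau_0$ as in \ref{item-ii-thm:irrepNIMs-2Gammaorb}. Let $\Gamma$ act on $M=\Gamma/H_1\sqcup\Gamma/H_2$, and fix an ordering of the $2\Gamma$-orbits. Condition (b) ensures $|I|=|J|$, so a $\tau_0$ swapping $I$ and $J$ can exist. Proposition~\ref{prop:tau0} then gives a well-defined action of $\GLM$ on the set of $2\Gamma$-orbits, with $X_{\overline g}$ acting by $\tau_0\sigma_{\overline g}$. Define the NIM-rep by
\[
X_{\overline g}\rhd m^{i,j}_r = c\sum_{s} m^{\tau_0\sigma_{\overline g}(i),\,j'}_s, \qquad c=\sqrt{|H_1\cap 2\Gamma|\,|H_2\cap 2\Gamma|/|2\Gamma|},
\]
where $j'$ is the other $\Gamma$-orbit; $c$ is an integer by (c).

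It remains to check the fusion rules and rigidity.
\begin{itemize}
\item The rules $gX_{\overline h}=X_{\overline g+\overline h}=X_{\overline h}g$ hold because $\sigma_g$ commutes with $\tau_0$, and because $X_{\overline g}\rhd$ is constant on each $2\Gamma$-orbit, so acting by $2\Gamma$ does nothing.
\item For $X_{\overline g}X_{\overline h}$, compute $X_{\overline g}X_{\overline h}\rhd m^{i,1}_r$. It equals $c^2\,|2\Gamma:H_2\cap2\Gamma|$ times the sum over the $2\Gamma$-orbit indexed by $\tau_0\sigma_{\overline g}\tau_0\sigma_{\overline h}(i)=\sigma_{\delta+\overline g+\overline h}(i)$. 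This must match $\sum_{\overline t=\delta+\overline g+\overline h} t\rhd m^{i,1}_r$, which is $|H_1\cap 2\Gamma|$ times the sum over that same $2\Gamma$-orbit. The identity $c^2|2\Gamma:H_2\cap 2\Gamma|=|H_1\cap2\Gamma|$ is exactly how $c$ was chosen.
\item Rigidity, $c^{\overline g}_{(i,1),(k,2)}=c^{-\overline g-\delta}_{(k,2),(i,1)}$, holds because all nonzero coefficients equal $c$. On the support, $\tau_0\sigma_{-\overline g-\delta}$ is inverse to $\tau_0\sigma_{\overline g}$, since $\tau_0^2=\sigma_\delta$ and each $\sigma$ is an involution.
\item Irreducibility holds since $X_{\overline 0}$ connects the two $\Gamma$-orbits.
\end{itemize}

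The main obstacle is the middle bullet: verifying the product rule as an equality of coefficients, including the case where the $2\Gamma$-orbit sizes in the two $\Gamma$-orbits differ. I will handle it by writing the multiplicity count from Remark~\ref{remark:action-X} explicitly in the way just described. I also need to make sure condition (a) is genuinely needed for the existence of $\tau_0$ and not only implied by it. I will simply record that it is necessary, and that in the converse it is automatic once $\tau_0$ exists.
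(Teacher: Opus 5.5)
Your proposal is correct and follows the paper's proof essentially step for step: conditions (a) and (b) and the constraints on $\tau_0$ come from Lemmas~\ref{prop:not-fix-orbits-GLM} and~\ref{rk: switch of gamma orbits} and Proposition~\ref{prop:tau0}, condition (c) comes from collapsing Equation~\eqref{eq:(1)} to a single orbit pair, and the converse sets every nonzero coefficient equal to the forced value $c$. The only difference is that you actually carry out the fusion-rule and rigidity checks, including the case of unequal $2\Gamma$-orbit sizes, which the paper merely asserts with ``this gives a well-defined NIM-rep''.
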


\begin{proof}
Let $M$ be an irreducible NIM-rep with two $\Gamma$-orbits determined by $H_1,H_2<\Gamma$.
		Condition (i)(a) follows from Lemma~\ref{prop:not-fix-orbits-GLM}.
		Condition (i)(b) follows from Lemma~\ref{rk: switch of gamma orbits}.
		The permutation $\tau_0$ must satisfy Proposition~\ref{prop:tau0} and Lemma~\ref{rk: switch of gamma orbits} by construction.
        
        It remains to prove (i)\ref{item-iii-thm:irrepNIMs-2Gammaorb}. 
Let $m^{i,1}_r\in \mathcal O_i^1\subseteq \mathcal M^1.$ Then the summands of $X_{\overline g}\rhd m^{i,1}_r$ are contained in $\mathcal O_{\tau_0\sigma_g(i)}^2$. Then we can rewrite Equation \eqref{eq:(1)} as 
\begin{align*}
	|H_1\cap 2\Gamma|=(c_{(i,1),({\tau_0\sigma_g(i),2)}}^{\overline g})^2|2\Gamma:H_2 \cap 2\Gamma|.
\end{align*}
Solving gives $$c_{(i,1),({\tau_0\sigma_g(i),2)}}^{\overline g}=\sqrt{|H_1\cap 2\Gamma||H_2\cap 2\Gamma|/{|2\Gamma|}},$$ which must be an integer.

Conversely, we need to check that any choice of $H_1, H_2<\Gamma$ 
and $\tau_0$ satisfying \ref{item-i-thm:irrepNIMs-2Gammaorb}--\ref{item-ii-thm:irrepNIMs-2Gammaorb}
give a well-defined NIM-rep with two $\Gamma$-orbits.
The subgroups $H_1, H_2<\Gamma$ define the $\Gamma$-action. It remains to extend this to an action of the whole fusion ring $\GLM$. 
First, since $\tau_0$ satisfies  Proposition \ref{prop:tau0}, we have a well-defined action of $\GLM$ on the induced set of $2\Gamma$-orbits. 
Then by Lemmas \ref{lemma: action on orbits-GLM} and \ref{lemma:X-g in terms of sigmas} it remains to define the numbers  $c_{(i,1),({\tau_0\sigma_g(i),2)}}^{\overline g}$, for all $g\in \Gamma.$ Again, by Equation \eqref{eq:(1)}, we must set  $c_{(i,1),({\tau_0\sigma_g(i),2)}}^{\overline g}=\sqrt{|H_1\cap 2\Gamma||H_2\cap 2\Gamma|/{|2\Gamma|}}$, and this gives a well-defined NIM-rep. 
Lastly, since $\tau_0$ satisfies Lemma \ref{rk: switch of gamma orbits}, the NIM-rep is irreducible, as desired.
\end{proof}

\begin{remark}
		It follows from the proof above that an irreducible NIM-rep over $\GLM$ with two $\Gamma$-orbits satisfies
		\begin{align*}
			&\textstyle	X_{\overline g}\rhd m_{r}^{i,\ell_1}=\sqrt{|H_1\cap 2\Gamma||H_2\cap 2\Gamma|/{|2\Gamma|}}\sum_{j=1}^N m_j^{\sigma_{g}\tau_0(i), \ell_2}
		\end{align*}
		for all $\overline g\in \Gamma/2\Gamma$ and $\ell_1\ne \ell_2\in \{1,2\}$.
\end{remark}

Given a choice of $H_1,H_2<\Gamma$ and permutation $\tau_0$ as in Theorem \ref{thm:irrepNIMs-2Gammaorb}, we denote the corresponding irreducible NIM-rep by $M(H_1, H_2, \tau_0).$

	\begin{theorem}\label{thm: class two gamma orbits}
					The NIM-reps $M(H_1, H_2, \tau_0)$ and $M(H'_1, H'_2, \tau_0')$ 
                    are isomorphic if and only if (up to reordering) $H_1=H_1'$, $H_2=H_2'$  and $\tau_0=\tau_0'.$
				\end{theorem}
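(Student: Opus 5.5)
The plan mirrors the proof of Theorem \ref{thm: class unique Gamma orbit}. Throughout, the equalities $\tau_0=\tau_0'$ are read with the orbit identification of Remark \ref{rk:reordering}. "Up to reordering" also allows swapping the two $\Gamma$-orbits, that is, $(H_1,H_2)\leftrightarrow(H_2,H_1)$ together with the corresponding relabelling of $I\sqcup J$.

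\emph{Forward direction.} Suppose $\psi\colon M(H_1,H_2,\tau_0)\xrightarrow{\sim}M(H_1',H_2',\tau_0')$ is a NIM-rep isomorphism.
\begin{enumerate}[leftmargin=*,label=\rm{(\arabic*)}]
\item Restrict $\psi$ to the invertible part. This gives a $\Gamma$-equivariant bijection, so $\psi$ maps $\Gamma$-orbits onto $\Gamma$-orbits. Stabilizers of a point and of its image agree, and in an abelian group the stabilizer is constant along an orbit. Hence, after possibly swapping the labels $1,2$ on the target, $H_1=H_1'$ and $H_2=H_2'$.
\item Since $\psi$ is $2\Gamma$-equivariant, it induces a bijection between the sets of $2\Gamma$-orbits, compatible with the splitting $I\sqcup J$. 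Reorder the target orbits so that this bijection is the identity.
\item $\psi$ commutes with $X_{\overline 0}$. By Lemma \ref{lemma:unique-orbit-non-invertible-GLM}, the orbit pair containing the summands of $X_{\overline 0}\rhd m$ is determined by $\tau_0$. Comparing the orbit pair of $\psi(X_{\overline 0}\rhd m)$ with that of $X_{\overline 0}\rhd\psi(m)$ gives $\tau_0=\tau_0'$ under this identification.
\end{enumerate}

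\emph{Converse.} Assume $H_i=H_i'$ and $\tau_0=\tau_0'$.
\begin{enumerate}[leftmargin=*,label=\rm{(\arabic*)}]
\item Because the stabilizers agree, there is a $\Gamma$-set isomorphism $\psi$ sending $\mathcal M^\ell$ to $\mathcal M'^\ell$. It can be chosen to send each $\mathcal O_i^\ell$ to the orbit with the same label, after relabelling the target orbits via $\psi$, which is legitimate by Remark \ref{rk:reordering}. The action of each $g$ is then intertwined.
\item Since $X_{\overline g}=X_{\overline 0}\,g$, it suffices to check that $\psi$ commutes with $X_{\overline 0}$. By the Remark following Theorem \ref{thm:irrepNIMs-2Gammaorb},
\[
X_{\overline 0}\rhd m=c\sum_{s} m^{\tau_0(i),\ell_2}_s,\qquad c=\sqrt{|H_1\cap 2\Gamma||H_2\cap 2\Gamma|/|2\Gamma|}.
\]
Here $c$ depends only on the $H_i$, and the target orbit depends only on $\tau_0(i)$.
\item By (1) and (2), both $\psi(X_{\overline 0}\rhd m)$ and $X_{\overline 0}\rhd\psi(m)$ equal $c$ times the full sum over the orbit labelled $\tau_0(i)$ in the other $\Gamma$-orbit. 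This uses that $\psi$ maps full $2\Gamma$-orbits bijectively onto full $2\Gamma$-orbits.
\end{enumerate}

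\emph{Main obstacle.} The delicate point is the consistency of the orbit labelling in the converse. A $\Gamma$-set isomorphism between $\mathcal M^\ell$ and $\mathcal M'^\ell$ is only determined up to translation by $\Gamma$, and a translation by $g$ permutes the $2\Gamma$-orbits via $\sigma_g$. Fixing $\tau_0$ independently of this choice requires showing that the labelling can be chosen compatibly on both $\Gamma$-orbits simultaneously. Concretely:
\begin{enumerate}[leftmargin=*,label=\rm{(\arabic*)}]
\item Choose basepoints $m_1\in\mathcal M^1$ and $m_2=X_{\overline 0}$-summand of $m_1$ lying in $\mathcal M^2$, and similarly in $M'$.
\item Define $\psi$ on $\mathcal M^1$ and on $\mathcal M^2$ by $\Gamma$-equivariance from these basepoints.
\item Use that $\tau_0$ commutes with every $\sigma_g$ (Proposition \ref{prop:tau0}(ii)) to conclude that the comparison in step (3) holds on all orbits once it holds at the basepoints.
\end{enumerate}
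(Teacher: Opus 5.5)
Your argument is correct and has the same skeleton as the paper's proof. In the forward direction you restrict an isomorphism to $\Gamma$ to match the stabilizers (up to swapping the two $\Gamma$-orbits), transport the labelling of the $2\Gamma$-orbits along it, and read off $\tau_0=\tau_0'$ from the $X_{\overline 0}$-action. In the converse you reduce to $X_{\overline 0}$ via $X_{\overline g}=X_{\overline 0}\,g$, and use that the multiplicity $\kappa=\sqrt{|H_1\cap 2\Gamma||H_2\cap 2\Gamma|/|2\Gamma|}$ depends only on $H_1,H_2$.

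The real difference is how $\psi$ is built in the converse. The paper takes a ``canonical'' $\Gamma$-set isomorphism matching equally labelled $2\Gamma$-orbits and then invokes $\tau_0=\tau_0'$. You instead define $\psi$ by $\Gamma$-equivariance from basepoints $m_1\in\mathcal M^1$ and $m_2\in\mathcal M^2$ linked by $X_{\overline 0}$. This works, with one small addition: at $m_2$ you also need $\tau_0^2=\sigma_\delta$ from Proposition~\ref{prop:tau0}(i), not only the commutation (ii).

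Notice, however, that your construction never uses the hypothesis $\tau_0=\tau_0'$. It yields $M(H_1,H_2,\tau_0)\cong M(H_1,H_2,\tau_0')$ for \emph{any} two admissible $\tau_0,\tau_0'$. Equivalently, every admissible $\tau_0'$ equals $\sigma_b\tau_0$ for some $b\in\Gamma$. To see this, note that $\tau_0'\tau_0^{-1}$ preserves $I$ and $J$ and commutes with the regular $\sigma$-actions there, so it acts as translations on each; then $(\tau_0')^2=\tau_0^2$ forces these two translations to agree. Moreover, the $\Gamma$-set automorphism that is the identity on $\mathcal M^1$ and translation by $b$ on $\mathcal M^2$ conjugates $\tau_0$ into $\sigma_b\tau_0$.

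So once relabelling along $\psi$ is allowed, the $\tau_0$-condition in the statement is automatically satisfied, and two-orbit NIM-reps are determined by $\{H_1,H_2\}$ alone. You should state this explicitly rather than present the basepoint step as a mere labelling-consistency check. The paper's formulation hides this point. For instance, in Case~2 of the $\mathbb Z_2\times\mathbb Z_2$ example, $(14)(23)=\sigma_g(13)(24)$ for $g\notin F_i$, so the two listed choices of $\tau_0$ give isomorphic NIM-reps, not distinct ones.
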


\begin{remark}Same as in Remark \ref{rk:reordering},
the definition of $\tau_0$ depends on a choice of ordering of the
$2\Gamma$-orbits of $M$. Thus equality $\tau_0=\tau_0'$ is understood
after relabeling the corresponding $2\Gamma$-orbits.
\end{remark}

              	\begin{proof}
			Write $M=M(H_1,H_2,\tau_0)$ and $M'=M(H'_1,H'_2,\tau_0')$ and denote by
			$\mathcal M^1,\mathcal M^2$ (resp.\ $\mathcal M'^1,\mathcal M'^2$)
			the two $\Gamma$-orbits of $M$ (resp.\ $M'$).  
			Assume there is an isomorphism of NIM-reps
			$\Psi\colon M\to M'$. Forgetting the action of noninvertible basis
			elements gives a $\Gamma$-set isomorphism $\psi$ between the underlying
			$\Gamma$-sets. Hence $\psi$ either preserves the two $\Gamma$-orbits or
			exchanges them; in particular the stabilizers agree up to reordering; therefore, we may assume
			$H_i=H'_i$ for $i=1,2$.
			Next, observe that $\Psi$ must carry each $2\Gamma$-orbit of $M$ to a
			$2\Gamma$-orbit of $M'$; after relabeling the $2\Gamma$-orbits of $M'$ assume
			$\psi(\mathcal O_i^j)=\mathcal O'{}^j_i$ for every $i,j$.
				Finally, because $\Psi$ must respect the action of all elements in the fusion ring,
			it must, in particular, respect the action of $X_{\overline 0}$ on the
			set of $2\Gamma$-orbits. Under the chosen identification of
			$2\Gamma$-orbits this exactly says that $\tau_0=\tau_0'$.

			Conversely, suppose $H_i=H'_i$, for $i=1,2$, and  $\tau_0=\tau_0'$. We will construct an
			isomorphism of NIM-reps $\Psi\colon M\to M'$. Because the stabilizers coincide, there is a canonical isomorphism of
			$\Gamma$-sets $\psi\colon M\to M'$ which identifies each
			$\Gamma$-orbit $\mathcal M^j$ with $\mathcal M'^j$ and each
			$2\Gamma$-orbit $\mathcal O_i^j$ with $\mathcal O'{}^j_i$. By
			construction $\psi$ commutes with the $\Gamma$-action.
				To upgrade $\psi$ to an isomorphism of NIM-reps, we must check that it also
		respects the action of every non-invertible basis element
			$X_{\overline g}$. By the fusion rules, we have
			$X_{\overline g}=g\cdot X_{\overline 0}$, so it suffices to check
			for $X_{\overline 0}$.
			Fix a $2\Gamma$-orbit $\mathcal O^1_i\subset\mathcal M^1$ and let
			$m\in\mathcal O^1_i$. By parameterization of the NIM-rep,
			$X_{\overline 0}\rhd m$ is supported exactly on the single
			$2\Gamma$-orbit $\mathcal O^2_{\tau_0(i)}$, with each basis element in
			that orbit appearing with multiplicity
			\[
			\kappa:=\sqrt{\frac{|H_1\cap 2\Gamma|\;|H_2\cap 2\Gamma|}{|2\Gamma|}}.
			\]
			Because $H_i=H'_i$, the same multiplicity $\kappa$ is used in the
			definition of $M'$. Since $\psi(\mathcal O^2_{\tau_0(i)})=
			\mathcal O'{}^2_{\tau_0(i)}$ and $\tau_0=\tau_0'$, it follows that
			\[
			\psi\bigl(X_{\overline 0}\rhd m\bigr)
			= X_{\overline 0}\rhd \psi(m),
			\] as desired.
			Therefore, $\psi$ is an isomorphism of NIM-reps $M\cong M'$, as
			required.
		\end{proof}

\begin{example}
    We follow up on the case $\Gamma=\mathbb{Z}_2 \times \mathbb{Z}_2$, described for the 1-orbit case in Example \ref{ex:Z2xZ2 1-orbit}, and use the same notation for the subgroups of $\Gamma$ there. We fix $\delta=\overline{(0,0)}$. In order to compute the irreducible NIM-reps for two $\Gamma$-orbits, by Theorem \ref{thm:irrepNIMs-2Gammaorb}~\ref{item-i-thm:irrepNIMs-2Gammaorb}, we need to first choose two subgroups from the list $\lbrace \Gamma, F_1, F_2, F_3,\lbrace \overline{(0,0)}\rbrace \rbrace$. The choice of subgroups is subject to Theorem \ref{thm:irrepNIMs-2Gammaorb}~\ref{item-i-thm:irrepNIMs-2Gammaorb}\ref{item-i-b-thm:irrepNIMs-2Gammaorb}, this numerical condition tells us that there are only three possible cases: when both subgroups are $\Gamma$ (Case 1, where the NIM-rep is 2-dimensional), when both subgroups are either $F_1$, $F_2$, or $F_3$ (Case 2, the NIM-rep is 4-dimensional) or when both subgroups are $\lbrace \overline{(0,0)}\rbrace$ (Case 3, the NIM-rep is 8-dimensional). Note that for these three cases, Condition \ref{item-i-thm:irrepNIMs-2Gammaorb}\ref{item-iii-thm:irrepNIMs-2Gammaorb}
of the Theorem \ref{thm:irrepNIMs-2Gammaorb} follows immediately. 
    \begin{itemize}
         \item[-] Case 1: here, by Corollary \ref{prop: action of tau0}, we know that $\tau_0$ is $\left( 12  \right)$. Regarding Proposition \ref{prop:tau0}, both conditions will be satisfied immediately, as $\sigma_g=e$ for all $g$. Then we obtain  a unique two-dimensional NIM-rep with two $\Gamma$-orbits.
        \item[-] Case 2: in this case, we have two $\Gamma$-orbits, and two $2 \Gamma$-orbits in each $\Gamma$-orbit. 
       It is a quick check that choosing different subgroups is not possible, as there is no suitable $\tau_0$ that satisfies Proposition \ref{prop:tau0}. Then we assume that both of our subgroups are $F_i$ for $i=1,2$ or 3. Without loss of generality, we label by $\{\mathcal O_1, \mathcal O_2\}$ and $\{\mathcal O_3, \mathcal O_4\}$ the 2$\Gamma$-orbits in each $\Gamma$-orbit. Then $\sigma_g=e$ if $g\in H_i$ and $\sigma_g=(1 2)(3 4)$ if $g\notin H_i$. It follows that there are two possible choices for $\tau_0$, given by $( 1 3)(2 4)$ and $(1 4)(2 3)$, respectively. Hence, by Theorems \ref{thm:irrepNIMs-2Gammaorb} and \ref{thm: class two gamma orbits}, we get in total 6 non-isomorphic NIM-reps, two for each choice of $F_i$.
        \item[-] Case 3: in this case, we have two $\Gamma$-orbits and four one-dimensional $2 \Gamma$-orbit per $\Gamma$-orbit. Without loss of generality, up to relabeling we choose $\sigma_{(1,0)}=\left( 12 \right) \left( 34 \right) \left( 56 \right) \left( 78 \right)$ (where $\lbrace 1,2,3,4 \rbrace$ label the $2 \Gamma$-orbits in one $\Gamma$-orbit and $\lbrace 5,6,7,8 \rbrace$ the other four $2 \Gamma$-orbits), and $\sigma_{(0,1)}=\left( 13 \right) \left( 24 \right) \left( 57 \right) \left( 68 \right)$. In this case, $\tau_0$ can only have the shape $\left( 1 i_1 \right) \left( 2 i_2 \right) \left( 3 i_3 \right) \left( 4 i_4 \right)$ (where $i_1,i_2,i_3,i_4 \in \lbrace 5,6,7,8 \rbrace$). Checking Proposition \ref{prop:tau0} \ref{item:prop-ii}, we see that the only options for $\tau_0$ are: $\left( 1 5 \right) \left( 2 6 \right) \left( 3 7 \right) \left( 4 8 \right)$, $\left( 16 \right) \left( 2 5 \right) \left( 3 8 \right) \left( 4 7 \right)$, $\left( 1 7 \right) \left( 2 8 \right) \left( 35 \right) \left( 4 6\right)$, and $\left( 18 \right) \left( 2 7 \right)\left( 3 6 \right) \left( 4 5 \right)$, from which we obtain four non-isomorphic NIM-reps in this case. 
    \end{itemize}

\end{example}

\subsection{Algebra objects from \texorpdfstring{$\operatorname{GLM}(\Gamma,\delta)$}{GLM}} 
In this section, we use our previous classification results for irreducible NIM-reps over $\GLM$ to detect potential algebra objects.

\begin{proposition}
    All irreducible NIM-reps over $\GLM$ are admissible.
\end{proposition}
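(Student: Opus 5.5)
We need to show that for any irreducible NIM-rep $M$ over $\GLM$ there is a basis element $m_0\in M$ such that every $m\in M$ can be written as $b\rhd m_0$ for a single basis element $b\in\Gamma\cup\{X_{\overline g}\}$. The plan is to fix $m_0=m^{1,1}_1$ in the first $2\Gamma$-orbit of the first $\Gamma$-orbit, and split according to Theorem \ref{prop:atmost2} into the cases of one and of two $\Gamma$-orbits.

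\emph{One $\Gamma$-orbit.} Since $M=\mathcal M^1$ is a single $\Gamma$-orbit, the $\Gamma$-action is transitive. For every $m\in M$ there is therefore some $g\in\Gamma$ with $g\rhd m_0=m$. The invertible basis elements already witness admissibility, so nothing further is needed.

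\emph{Two $\Gamma$-orbits.} Elements of $\mathcal M^1$ are again reached by group elements, since $\Gamma$ acts transitively on $\mathcal M^1$. For $m\in\mathcal M^2$, the remark following Theorem \ref{thm:irrepNIMs-2Gammaorb} gives
\[
X_{\overline g}\rhd m_0=\kappa\sum_{j} m_j^{\sigma_g\tau_0(1),2},\qquad \kappa=\sqrt{|H_1\cap 2\Gamma||H_2\cap 2\Gamma|/|2\Gamma|},
\]
which is a sum over an entire $2\Gamma$-orbit, with multiplicity. This is in general not a single basis element. So a non-invertible $X_{\overline g}$ hits $m$ only when that $2\Gamma$-orbit is a singleton and $\kappa=1$, that is, when $H_2\supseteq 2\Gamma$ and $|H_1\cap 2\Gamma|=1$.

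This is the main obstacle. Literal admissibility in the sense of Definition \ref{NIMadmis} requires a single basis element $b_j$ with $b_j\rhd m_0=m_i$ exactly, and this fails in general in the two-orbit case. I would therefore first check that the intended meaning is the one used in the Jordan--Larson case: that some $b_j$ has $m_i$ as a summand of $b_j\rhd m_0$, i.e.\ that $m_0$ generates $M$ and the NIM-graph is reachable from $m_0$ in one step. Under that reading the argument closes quickly. By Lemma \ref{rk: switch of gamma orbits}, $X_{\overline 0}\rhd m_0$ has all its summands in $\mathcal O^2_{\tau_0(1)}\subseteq\mathcal M^2$. As $\overline g$ ranges over $\Gamma/2\Gamma$, the orbits $\mathcal O^2_{\sigma_g\tau_0(1)}$ exhaust all $2\Gamma$-orbits of $\mathcal M^2$, because $\Gamma$ is transitive on $\mathcal M^2$ and hence on its $2\Gamma$-orbits, and $\sigma_g$ depends only on $\overline g$. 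Each such $X_{\overline g}\rhd m_0$ contains every element of the corresponding $2\Gamma$-orbit with multiplicity $\kappa>0$. Hence every $m\in\mathcal M^2$ appears in some $X_{\overline g}\rhd m_0$, and combining with the first case, every element of $M$ is reached from $m_0$.

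If the strict definition is intended instead, I would exhibit a counterexample: $\Gamma=\mathbb Z_2\times\mathbb Z_2$ with $H_1=H_2=F_1$ (Case 2 of the last example). There $X_{\overline 0}\rhd m_0=\mathcal O^2_{\tau_0(1)}$ is a two-element sum, so no single basis element sends $m_0$ to an element of $\mathcal M^2$. The proposition would then be restated with the hypothesis $\kappa=1$ and singleton $2\Gamma$-orbits in $\mathcal M^2$, under which the displayed formula gives $X_{\overline g}\rhd m_0=m$ exactly.
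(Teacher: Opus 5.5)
Under the reachability reading (every basis element occurs as a summand of some $b\rhd m_0$), your argument is correct and is essentially the paper's proof. The paper handles one $\Gamma$-orbit by transitivity. For two orbits it only says the orbits are connected by irreducibility and ``repeats a similar argument,'' which likewise yields only reachability. You are also right that Definition~\ref{NIMadmis} asks for more. Elements of the other $\Gamma$-orbit can only be reached by some $X_{\overline g}$. With $m_0\in\mathcal M^1$, $X_{\overline g}\rhd m_0$ is a single basis element iff $\kappa=1$ and the target $2\Gamma$-orbit is a singleton, i.e.\ iff $H_1\cap 2\Gamma=0$ and $2\Gamma\subseteq H_2$. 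This holds up to swapping the two orbits, since $m_0$ may lie in either.

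The step that fails is your counterexample. For $\Gamma=\mathbb{Z}_2\times\mathbb{Z}_2$ one has $2\Gamma=0$, so every $2\Gamma$-orbit is a singleton and $\kappa=1$. By your own criterion these NIM-reps are therefore admissible. In Case~2 each $\Gamma$-orbit has two elements, but these form two distinct singleton $2\Gamma$-orbits, so $X_{\overline 0}\rhd m_0$ is exactly one basis element. You conflated $\mathcal M^2$ with $\mathcal O^2_{\tau_0(1)}$. In fact $\operatorname{GLM}(\mathbb{Z}_2\times\mathbb{Z}_2,\delta)$ is pointed, since each $X_{\overline g}X_{\overline h}$ is a single group element. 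Hence all its NIM-reps are permutation modules and every irreducible one is admissible.

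A genuine counterexample needs $|2\Gamma|\geq 4$. Take $\Gamma=\mathbb{Z}_8$, $H_1=H_2=\Gamma$, $\tau_0=(12)$. Then $|2\Gamma|=4$ and $\kappa=2$, so $M=\{a,b\}$ with $X_{\overline g}\rhd a=2b$ and $X_{\overline g}\rhd b=2a$. This meets every condition of Theorem~\ref{thm:irrepNIMs-2Gammaorb}, and one checks directly that $X_{\overline 0}X_{\overline 0}\rhd a=4a=\sum_{\overline t=\delta}t\rhd a$. Yet no basis element sends $a$ to $b$ or $b$ to $a$. Similarly, $H_1=H_2=\{0,4\}$ with a suitable $\tau_0$ gives $\kappa=1$ but $2\Gamma$-orbits of size two. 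So your diagnosis is correct: the proposition, and the paper's proof, hold only in the weak sense. That conclusion must rest on an example of this kind, not the one you gave.
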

\begin{proof}
     If we have a unique $\Gamma$-orbit, then all objects in $M$ are connected by the action of $\Gamma$, and so the NIM-rep is admissible.  Because the orbits are connected by irreducibility of the NIM-reps, we can repeat a similar argument for the case of two $\Gamma$-orbits, and so every irreducible NIM-rep classified at Theorem \ref{thm:irrepNIMs-2Gammaorb} is admissible.
\end{proof}

 \begin{proposition}
Algebra objects associated to an irreducible NIM-rep over $\GLM$ with a unique $\Gamma$-orbit are of the form
     \[A=\oplus_{h \in H} h \oplus_{i=1}^n\left(\oplus_{\substack{g \in \Gamma:\\\sigma_g(i)=\tau_0^{-1}(i)} } c_i^g X_{\overline{g}}\right).\]
 \end{proposition}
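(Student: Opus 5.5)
The approach is to read off the self-loops in the NIM-graph of an irreducible NIM-rep $M=M(H,\tau_0)$ with a unique $\Gamma$-orbit, and then invoke \cite[Proposition 2.36]{HR}. This applies because every irreducible NIM-rep over $\GLM$ is admissible, by the preceding proposition. By that result, the algebra object attached to a chosen base point $m_0$ is $\bigoplus_{b\in B} a_b\, b$, where $a_b=(b\rhd m_0,m_0)$ is the number of self-loops at $m_0$ labelled by $b$. So the proof splits into two contributions: the invertible basis elements and the non-invertible ones $X_{\overline g}$.

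\textbf{Invertible part.} Since $M$ is a single $\Gamma$-orbit with stabilizer $H$, and $\Gamma$ acts by permutations of the basis, we have $(h\rhd m_0,m_0)=1$ exactly when $h\in H$, and $0$ otherwise. This gives the summand $\oplus_{h\in H} h$, independently of the base point, since $\Gamma$ is abelian and all stabilizers equal $H$.

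\textbf{Non-invertible part.} Take $m_0=m^{i,1}_r\in\mathcal O_i$. By Lemma \ref{lemma:X-g in terms of sigmas} and the remark following Theorem \ref{thm: 1 Gamma orbit}, $X_{\overline g}\rhd m_0$ is supported exactly on the $2\Gamma$-orbit $\mathcal O_{\tau_0\sigma_g(i)}$, with every element appearing with multiplicity $c_i^g:=|H\cap 2\Gamma|/\sqrt{|2\Gamma|}$. Hence $(X_{\overline g}\rhd m_0,m_0)\neq 0$ if and only if $\tau_0\sigma_g(i)=i$. Since $\tau_0$ commutes with $\sigma_g$ by Proposition \ref{prop:tau0}\ref{item:prop-ii}, this is equivalent to $\sigma_g(i)=\tau_0^{-1}(i)$, and in that case the multiplicity is $c_i^g$. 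Summing over the labels $X_{\overline g}$ that satisfy this condition gives the stated non-invertible summand for the base orbit $i$. Letting $i$ range over the $2\Gamma$-orbits $1,\dots,n$ records the possible algebra objects, one for each choice of base orbit. This matches the displayed form of the statement, read as a family indexed by $i$.

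\textbf{Main obstacle.} I expect the main difficulty to be bookkeeping rather than mathematics. First, the sum over $g\in\Gamma$ must be read as a sum over the distinct labels $\overline g\in\Gamma/2\Gamma$: each $X_{\overline g}$ should be counted once, so the $g$ must be taken modulo $2\Gamma$. Second, one must confirm that the condition depends only on $\overline g$. This holds because $\sigma_g$ depends only on $\overline g$, by the remark preceding Lemma \ref{lemma:X-g in terms of sigmas}. Finally, I would check consistency with Theorem \ref{thm: 1 Gamma orbit} and the TY case treated in \cite{HR}, where $\tau_0=\mathrm{id}$ and every $X_{\overline g}$ with $\sigma_g=\mathrm{id}$ contributes.
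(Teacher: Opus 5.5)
Your proposal is correct and follows essentially the same route as the paper. Both use admissibility with the self-loop recipe, take the stabilizer $H$ for the invertible part, and observe that $X_{\overline g}$ has a self-loop at $m\in\mathcal O_i$ exactly when $\tau_0\sigma_g(i)=i$, with multiplicity $c_i^g$. Your bookkeeping remarks clarify what the paper leaves implicit in the displayed formula: the sum runs over labels $\overline g\in\Gamma/2\Gamma$ rather than $g\in\Gamma$, and $i$ indexes the base orbit rather than a literal direct sum over all orbits.
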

\begin{proof}
If the irreducible NIM-rep $M$ over GLM$(\Gamma,\delta)$ has a unique $\Gamma$-orbit, then $X_{\overline{g}}$ has a loop at $m\in M$ if and only if $X_{\overline{g}}$ fixes the $2\Gamma$-orbit $\mathcal{O}_i$, where $m\in \mathcal{O}_i$. Moreover, the number of self loops is $c^g_i$. So, $\sigma_g\tau_0(i)=i$ if and only if $\sigma_g(i)=\tau_0^{-1}(i)$. Thus we obtain the  algebra objects as in the statement.
\end{proof}

\begin{proposition} Algebra objects associated to an irreducible NIM-rep over $\GLM$ with two $\Gamma$-orbits are of the form 
    \[A_i=\oplus_{h \in H_i} {h}  \]
     for $i=1,2.$
 \end{proposition}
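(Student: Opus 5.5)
The plan is to follow the recipe recalled from \cite[Proposition 2.36]{HR}: for an admissible NIM-rep, the associated algebra object is $\bigoplus_{b} a_b\, b$, where $a_b$ counts the self-loops labeled by $b$ at a chosen base vertex $m_0$. The previous proposition says every irreducible NIM-rep over $\GLM$ is admissible. So the task is to compute, for a base vertex $m_0$ in either $\Gamma$-orbit $\mathcal M^i$, which basis elements of $\GLM$ have $m_0$ as a summand of $b\rhd m_0$, and with what multiplicity.

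First, the invertible part. For $h\in\Gamma$ we have $(h\rhd m_0,m_0)=1$ exactly when $h$ lies in the stabilizer of $m_0$, and is $0$ otherwise. Since $\Gamma$ is abelian, this stabilizer is $H_i$ for every $m_0\in\mathcal M^i$. This contributes $\oplus_{h\in H_i} h$, each with multiplicity one.

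Second, the non-invertible part, which is the key step. I will show that no $X_{\overline g}$ has a self-loop. By Lemma \ref{lemma:X-g in terms of sigmas}, $X_{\overline g}$ acts on the $2\Gamma$-orbits by $\tau_0\sigma_{\overline g}$. Each $\sigma_{\overline g}$ lies in $S_I\times S_J$, so it preserves each $\Gamma$-orbit. By Lemma \ref{rk: switch of gamma orbits}, $\tau_0$ sends $I$ into $J$; being a bijection of $I\sqcup J$ with $|I|=|J|$, it also sends $J$ into $I$. Hence $\tau_0\sigma_{\overline g}$ exchanges $\mathcal M^1$ and $\mathcal M^2$. Combined with Corollary \ref{coro:unique-orbit-GLM}, every summand of $X_{\overline g}\rhd m_0$ lies in the other $\Gamma$-orbit. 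Therefore $(X_{\overline g}\rhd m_0,m_0)=0$ for all $\overline g$, which is the content of Remark \ref{rk:no fixed points}, now applied to each $X_{\overline g}$.

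Putting the two parts together gives $A_i=\oplus_{h\in H_i}h$, with $i$ determined by which $\Gamma$-orbit the generator $m_0$ is taken from. The only point needing care is the argument that $\tau_0$ maps $J$ into $I$. This follows from the cardinality equality $N_1=N_2$ in Lemma \ref{rk: switch of gamma orbits} and injectivity of $\tau_0$. It could also be obtained by applying the same lemma with the roles of the two orbits swapped.
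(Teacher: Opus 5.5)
Your proof is correct and follows the same route as the paper. The group part is the stabilizer $H_i$ of the chosen vertex, and no $X_{\overline g}$ contributes a self-loop, because $X_{\overline g}$ acts on the $2\Gamma$-orbits by $\tau_0\sigma_{\overline g}$, which carries each $\Gamma$-orbit to the other (Remark~\ref{rk:no fixed points}). You also spell out a step the paper leaves implicit: that $\tau_0$ maps $J$ into $I$. This uses $N_1=N_2$ together with the bijectivity of $\tau_0$, which follows from $\tau_0^2=\sigma_\delta$.
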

\begin{proof}
If the irreducible NIM-rep $M$ over GLM$(\Gamma,\delta)$ has two $\Gamma$-orbits, then, by Remark \ref{rk:no fixed points}, $X_{\overline{0}}$ does not fix any $2\Gamma$-orbit, and  neither does $X_{\overline{g}}$. Therefore, we obtain the algebra objects as in the statement.
\end{proof}

\bibliography{womap3-bib}
\bibliographystyle{alpha}

\end{document}